\providecommand{\U}[1]{\protect\rule{.1in}{.1in}}
\newtheorem{theorem}{Theorem}[section]
\newtheorem{definition}[theorem]{Definition}
\newtheorem{example}[theorem]{Example}
\newtheorem{lemma}[theorem]{Lemma}
\newtheorem{proposition}[theorem]{Proposition}
\newtheorem{remark}[theorem]{Remark}
\newenvironment{proof}[1][Proof]{\noindent \textbf{#1.} }{\  \rule{0.5em}{0.5em}}
\numberwithin{equation}{section}
\begin{document}

\title{Stochastic maximum principle for stochastic recursive optimal control problem
under volatility ambiguity}
\author{Mingshang Hu \thanks{Qilu Institute of Finance, Shandong University, Jinan,
Shandong 250100, PR China. humingshang@sdu.edu.cn. Research supported by NSF
(No. 11201262 and 10921101) and Shandong Province (No.BS2013SF020 and
ZR2014AP005) }
\and Shaolin Ji\thanks{Qilu Institute of Finance, Shandong University, Jinan,
Shandong 250100, PR China. jsl@sdu.edu.cn (Corresponding author). Research
supported by NSF (No. 11171187, 11222110 and 11221061), Programme of
Introducing Talents of Discipline to Universities of China (No.B12023). Hu and
Ji's research was partially supported by NSF (No. 10921101) and by the 111
Project (No. B12023) }}
\maketitle

\textbf{Abstract}. We study a stochastic recursive optimal control problem in
which the cost functional is described by the solution of a backward
stochastic differential equation driven by $G$-Brownian motion. Some of the
economic and financial optimization problems with volatility ambiguity can be
formulated as such problems. Different from the classical variational
approach, we establish the maximum principle by the linearization and weak
convergence methods.

{\textbf{Key words}. } Backward stochastic differential equations, Volatility
ambiguity, $G$-expectation, Maximum principle, Robust control

\textbf{AMS subject classifications.} 93E20, 60H10, 35K15

\addcontentsline{toc}{section}{\hspace*{1.8em}Abstract}

\section{Introduction}

In economic theory, for a given consumption process $(c_{s})_{0\leq s\leq T}$
under probability $P$, Duffie and Epstein \cite{DE} introduced the stochastic
differential recursive utility%
\begin{equation}
y^{P}(t)=E_{P}[\int_{t}^{T}f(y^{P}(s),c(s))ds\mid\mathcal{F}_{t}],\;0\leq
t\leq T, \label{sdu-1}%
\end{equation}
and many optimization problems for the stochastic differential recursive
utilities are well studied by Duffie and Skiadas \cite{DS}\ etc. In fact, the
stochastic differential recursive utility is associated with the solution of a
particular backward stochastic differential equation (BSDE). It is well known
that the general BSDE was introduced by Pardoux and Peng \cite{PP90}. Peng
\cite{peng-1993} first generalized the classical stochastic optimal control
problem to a new one in which the objective functional is defined by the
solution of the following BSDE (\ref{intro--bsde0}) at time $0$:%
\begin{equation}
\left\{
\begin{array}
[c]{rl}%
-dy(t)= & f(t,x(t),y(t),z(t),u(t))dt-z(t)dB(t),\\
y(T)= & \phi(x(T)),
\end{array}
\right.  \label{intro--bsde0}%
\end{equation}
where $B$ is a standard Brownian motion defined on a probability space
$(\Omega,\mathcal{F},P)$. From the BSDE point of view, El Karoui et. al.
\cite{EPQ} considered a more general class of recursive utilities defined as
the solution of BSDEs. Thus, this new kind of stochastic optimal control
problem is called the stochastic recursive optimal control problem.

Chen and Epstein \cite{CE} studied the stochastic differential recursive
utility with drift ambiguity. The drift ambiguity in their context is
described by a class of equivalent probability measures $\mathcal{P}$. The
stochastic differential recursive utility with drift ambiguity is defined as
the lower envelope
\begin{equation}
y(t)=\underset{P\in\mathcal{P}}{\text{ess}\inf}y^{P}(t),\;0\leq t\leq T,
\label{sdu-2}%
\end{equation}
where $y^{P}(t)$ is the solution of (\ref{sdu-1}) at time $t$. They proved
that $y(t)$ of (\ref{sdu-2}) can be characterized by a special BSDE and the
corresponding recursive utility optimization problems with drift ambiguity
still fall in the framework of the stochastic recursive optimal control problem.

Many economic and financial problems involve volatility ambiguity (for the
motivation to consider volatility uncertainty, refer to Epstein and Ji
\cite{EJ-1,EJ-2}). It is well known that volatility ambiguity is chracterized
by a family of nondominated probability measures. In this case, (\ref{sdu-2})
can not be formulated as a classical BSDE, because it can not be modeled
within a probability space framework. So we need a new framework to
accommodate stochastic differential recursive utility with volatility ambiguity.\ 

Inspired by studying financial problems with volatility ambiguity (see
\cite{ALP,L}), Peng introduced a fully nonlinear expectation, called
$G$-expectation $\mathbb{\hat{E}}\mathcal{[\cdot]}$ (see \cite{P10} and the
references therein) which does not require a probability space framework.
Under this $G$-expectation framework ($G$-framework for short) a new type of
Brownian motion called $G$-Brownian motion was constructed. The stochastic
calculus with respect to the $G$-Brownian motion has been established.

Recently, Hu et. al developed the BSDE theory under this $G$-expectation
framework in \cite{HJPS,HJPS1} (see Soner et al. \cite{STZ11} for another
formulation of fully nonlinear BSDE, called 2BSDE). In more details, they
proved that the following BSDE driven by $G$-Brownian motion ($G$-BSDE for
short)%
\[%
\begin{array}
[c]{rl}%
y(t)= & \xi+\int_{t}^{T}f(s,y(s),z(s))ds+\int_{t}^{T}g(s,y(s),z(s))d\langle
B\rangle(s)\\
& -\int_{t}^{T}z(s)dB(s)-(K(T)-K(t))
\end{array}
\]
has a unique triple of solution $(y,z,K)$. In fact, in the volatility
ambiguity case, (\ref{sdu-2}) can be formulated as a special $G$-BSDE (see
\cite{EJ-1,EJ-2}). So the stochastic recursive utility optimization problem
with volatility ambiguity is a special case of the following problem
(\ref{intro-bsde}). The state equations are the following forward and backward
SDEs driven by $G$-Brownian motion: for $t\in\lbrack0,T],$%
\[
\left\{
\begin{array}
[c]{rl}%
dx(t)= & b(t,x(t),u(t))dt+h^{ij}(t,x(t),u(t))d\langle B^{i},B^{j}%
\rangle(t)+\sigma^{i}(t,x(t),u(t))dB^{i}(t),\\
x(0)= & x_{0}\in\mathbb{R}^{n},
\end{array}
\right.
\]%
\begin{equation}
\left\{
\begin{array}
[c]{rl}%
-dy(t)= & f(t,x(t),y(t),z(t),u(t))dt+g^{ij}(t,x(t),y(t),z(t),u(t))d\langle
B^{i},B^{j}\rangle(t)-z(t)dB(t)-dK(t),\\
y(T)= & \phi(x(T)).
\end{array}
\right.  \label{intro-bsde}%
\end{equation}
The cost functional is introduced by the solution of the above BSDE at time
$0$, i.e., $J(u(\cdot))=y(0)$. The stochastic recursive optimal control
problem is to minimize the cost functional over the admissible controls.

The stochastic maximum principle is an important approach to solve stochastic
optimal control problems (see \cite{FHT,Hu,HP,LZ,MOZ,P-90,QT,T,Wu,Yong,Zhou}).
A local form of the stochastic maximum principle for the classical stochastic
recursive optimal control problem was first established in Peng
\cite{peng-1993}. In this paper, we study the stochastic maximum principle for
the problem (\ref{intro-bsde}) when the control domain is convex.

Note that the solution $y$ of (\ref{intro-bsde}) at time $0$ can be written
as
\begin{align}
y_{0}  &  =\mathbb{\hat{E}}[\phi(x(T))+\int_{0}^{T}%
f(t,x(t),y(t),z(t),u(t))dt+\int_{0}^{T}g^{ij}(t,x(t),y(t),z(t),u(t))d\langle
B^{i},B^{j}\rangle(t)]\nonumber\\
&  =\sup_{P\in\mathcal{P}}E_{P}[\phi(x(T))+\int_{0}^{T}%
f(t,x(t),y(t),z(t),u(t))dt+\int_{0}^{T}g^{ij}(t,x(t),y(t),z(t),u(t))d\langle
B^{i},B^{j}\rangle(t)], \label{state-intro-1}%
\end{align}
where $\mathcal{P}$ is a family of weakly compact nondominated probability
measures (see \cite{DHP11}). Thus, our stochastic recursive optimal control
problem is essentially a "inf sup problem". Such problem is known as the
robust optimal control problem, i.e., we consider the worst scenario by
maximizing over a set of probability measures and then we minimize the cost functional.

For the case $f$ does not depend on $(y,z)$ and $g^{ij}=0$, i.e.,%
\begin{equation}
J(u(\cdot))=\mathbb{\hat{E}}[\phi(x(T))+\int_{0}^{T}f(t,x(t),u(t))dt],
\label{classical-objective00}%
\end{equation}
Xu \cite{Xu} studied this problem. Based on the subadditivity of
$\mathbb{\hat{E}}[\cdot]$, he obtained the variational inequality by the
classical variational method. But he did not\ get the stochastic maximum
principle since the sublinear operator $\mathbb{\hat{E}}$ in his main theorem
can not be deleted. It is worth to pointing out that the classical variational
method can not be applied to obtain the variational inequality for our problem
(\ref{intro-bsde}).

In the literatures, in order to derive the maximum principle for the classical
stochastic recursive optimal control problem, one need to obtain the
variational equation for the BSDE (\ref{intro--bsde0}). But in our context,
since the $K$ term of the solution of (\ref{intro-bsde}) is a decreasing
$G$-martingale, it is unable to obtain the "derivative" for $K$ in general. So
we can not obtain the variational equation for the $G$-BSDE (\ref{intro-bsde}%
). To overcome this difficulty, we introduce the linearization and weak
convergence methods to directly obtain the derivative for the value function.
By Minimax Theorem, the variational inequality on a reference probability
$P^{\ast}$ is obtained. Based on the obtained variational inequality, we
derive the stochastic maximum principle holds $P^{\ast}$-a.s.. Furthermore, we
prove that the obtained stochastic maximum principle is also a sufficient
condition under some convex assumptions.

The paper is organized as follows. In Section 2, we present some fundamental
results on $G$-expectation theory. We formulate our stochastic recursive
optimal control problem in Section 3. We derive the maximum principle in
Section 4 and give the general results in Section 5. In Section 6, applying
the obtained maximum principle, we solve a LQ problem.

\section{Preliminaries}

We review some basic notions and results of $G$-expectations. The readers may
refer to \cite{HJPS,P07a,P07b,P08a,P10} for more details.

Let $\Omega=C_{0}([0,\infty);\mathbb{R}^{d})$ be the space of $\mathbb{R}^{d}%
$-valued continuous functions on $[0,\infty)$ with $\omega_{0}=0$ and let
$(B(t))_{t\geq0}$ be the canonical process. For each fixed $T>0$, set%
\[
L_{ip}(\Omega_{T}):=\{ \varphi(B(t_{1}),\cdots,B(t_{n})):n\geq1,t_{1}%
,\cdots,t_{n}\in\lbrack0,T],\varphi\in C_{b.Lip}(\mathbb{R}^{d\times n})\},
\]
where $C_{b.Lip}(\mathbb{R}^{d\times n})$ denotes the space of bounded
Lipschitz functions on $\mathbb{R}^{d\times n}$. Obviously, $L_{ip}(\Omega
_{T})\subset L_{ip}(\Omega_{T^{\prime}})$ for $T<T^{\prime}$. We also set%
\[
L_{ip}(\Omega)=%
{\displaystyle\bigcup\limits_{n=1}^{\infty}}
L_{ip}(\Omega_{n}).
\]

For each given monotonic and sublinear function $G(\cdot):\mathbb{S}%
_{d}\rightarrow\mathbb{R}$, where $\mathbb{S}_{d}$ denotes the collection of
$d\times d$ symmetric matrices, there exists a bounded and closed subset
$\Gamma\subset$$\mathbb{R}$$^{d\times d}$ such that
\begin{equation}
G(A)=\frac{1}{2}\sup_{\gamma\in\Gamma}\text{\textrm{tr}}[\gamma\gamma^{T}A],
\end{equation}
where $\mathbb{R}$$^{d\times d}$ denotes the collection of $d\times d$
matrices. In this paper we only consider non-degenerate $G$, i.e., there
exists some $\underline{\sigma}^{2}>0$ such that $G(A)-G(B)\geq
\underline{\sigma}^{2}\mathrm{tr}[A-B]$ for any $A\geq B$. Now, we define a
functional $\mathbb{\hat{E}}:L_{ip}(\Omega)\rightarrow\mathbb{R}$ by two steps.

Step 1. For $X=\varphi(B(t+s)-B(s))$ with $t$, $s\geq0$ and $\varphi\in
C_{b.Lip}(\mathbb{R}^{d})$, we define%
\[
\mathbb{\hat{E}}[X]=u(t,0),
\]
where $u$ is the solution of the following $G$-heat equation:%
\[
\partial_{t}u-G(D_{xx}^{2}u)=0,\ u(0,x)=\varphi(x).
\]

Step 2. For $X=\varphi(B(t_{1})-B(t_{0}),B(t_{2})-B(t_{1}),\cdots
,B(t_{n})-B(t_{n-1}))$ with $0\leq t_{0}<\cdots<t_{n}$ and $\varphi\in
C_{b.Lip}(\mathbb{R}^{d\times n})$, we define%
\[
\mathbb{\hat{E}}[X]=\varphi_{n},
\]
where $\varphi_{n}$ is obtained via the following procedure:%
\[%
\begin{array}
[c]{rcl}%
\varphi_{1}(x_{1},\cdots,x_{n-1}) & = & \mathbb{\hat{E}}[\varphi(x_{1}%
,\cdots,x_{n-1},B(t_{n})-B(t_{n-1}))],\\
\varphi_{2}(x_{1},\cdots,x_{n-2}) & = & \mathbb{\hat{E}}[\varphi_{1}%
(x_{1},\cdots,x_{n-2},B(t_{n-1})-B(t_{n-2}))],\\
& \vdots & \\
\varphi_{n} & = & \mathbb{\hat{E}}[\varphi_{n-1}(B(t_{1})-B(t_{0}))].
\end{array}
\]
The corresponding conditional expectation $\mathbb{\hat{E}}_{t}$ of $X$ with
$t=t_{i}$ is defined by%
\[%
\begin{array}
[c]{l}%
\mathbb{\hat{E}}_{t_{i}}[\varphi(B(t_{1})-B(t_{0}),B(t_{2})-B(t_{1}%
),\cdots,B(t_{n})-B(t_{n-1}))]\\
=\varphi_{n-i}(B(t_{1})-B(t_{0}),\cdots,B(t_{i})-B(t_{i-1})).
\end{array}
\]

It is easy to check that $(\mathbb{\hat{E}}_{t})_{t\geq0}$ satisfies the
following properties: for each $X$, $Y\in L_{ip}(\Omega)$,

\begin{description}
\item[(i)] Monotonicity: If $X\geq Y$, then $\mathbb{\hat{E}}_{t}%
[X]\geq\mathbb{\hat{E}}_{t}[Y]$;

\item[(ii)] Constant preservation: $\mathbb{\hat{E}}_{t}[X]=X$ for $X\in
L_{ip}(\Omega_{t})$;

\item[(iii)] Sub-additivity: $\mathbb{\hat{E}}_{t}[X+Y]\leq\mathbb{\hat{E}%
}_{t}[X]+\mathbb{\hat{E}}_{t}[Y]$;

\item[(iv)] Positive homogeneity: $\mathbb{\hat{E}}_{t}[XY]=X^{+}%
\mathbb{\hat{E}}_{t}[Y]+X^{-}\mathbb{\hat{E}}_{t}[-Y]$ for $X\in L_{ip}%
(\Omega_{t})$;

\item[(v)] Consistency: $\mathbb{\hat{E}}_{s}[\mathbb{\hat{E}}_{t}%
[X]]=\mathbb{\hat{E}}_{s\wedge t}[X]$, specially, $\mathbb{\hat{E}%
}[\mathbb{\hat{E}}_{t}[X]]=\mathbb{\hat{E}}[X]$.
\end{description}

We denote by $L_{G}^{p}(\Omega)$ the completion of $L_{ip}(\Omega)$ under the
norm $\Vert X\Vert_{p,G}=(\mathbb{\hat{E}}[|X|^{p}])^{1/p}$ for $p\geq1$,
similarly for $L_{G}^{p}(\Omega_{T})$. For each$\ t\geq0$, $\mathbb{\hat{E}%
}_{t}[\cdot]$ can be extended continuously to $L_{G}^{1}(\Omega)$ under the
norm $\Vert\cdot\Vert_{1,G}$. $(\Omega,L_{G}^{1}(\Omega),\mathbb{\hat{E}})$ is
called a $G$-expectation space. The corresponding canonical process
$(B(t))_{t\geq0}$ is called a $G$-Brownian motion.

\begin{definition}
A process $(X(t))_{t\geq0}$ is called a $G$-martingale if $X(t)\in L_{G}%
^{1}(\Omega_{t})$ and $\mathbb{\hat{E}}_{s}[X(t)]=X(s)$ for $s\leq t$.
\end{definition}

\begin{remark}
It is important to note that $(-X(t))_{t\geq0}$ may be not a $G$-martingale.
\end{remark}

Set%
\begin{equation}
\mathcal{P}=\{P:P\text{ is a probability on }(\Omega,\mathcal{B}%
(\Omega))\text{, }E_{P}[X]\leq\mathbb{\hat{E}}[X]\text{ for }X\in L_{G}%
^{1}(\Omega)\}. \label{pr12345}%
\end{equation}

\begin{theorem}
\label{the2.7} (\cite{DHP11,HP09}) Let $\mathcal{P}$ be defined as in
(\ref{pr12345}). Then $\mathcal{P}$ is convex, weakly compact and
\[
\mathbb{\hat{E}}[\xi]=\max_{P\in\mathcal{P}}E_{P}[\xi]\ \ \text{for
\ all}\ \xi\in L_{G}^{1}(\Omega).
\]
$\mathcal{P}$ is called a set that represents $\mathbb{\hat{E}}$.
\end{theorem}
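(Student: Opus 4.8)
The plan is to establish the three assertions---convexity, the representation $\mathbb{\hat{E}}[\xi]=\max_{P\in\mathcal{P}}E_P[\xi]$, and weak compactness---more or less in that order, since the representation is the analytic heart and compactness will be deduced from it together with a tightness argument. Convexity is immediate: if $P_1,P_2\in\mathcal{P}$ and $\lambda\in[0,1]$, then for any $\xi\in L^1_G(\Omega)$ we have $E_{\lambda P_1+(1-\lambda)P_2}[\xi]=\lambda E_{P_1}[\xi]+(1-\lambda)E_{P_2}[\xi]\le\mathbb{\hat{E}}[\xi]$, so the convex combination again lies in $\mathcal{P}$. The nontrivial content is to exhibit, for each fixed $\xi\in L^1_G(\Omega)$, a probability $P_\xi\in\mathcal{P}$ with $E_{P_\xi}[\xi]=\mathbb{\hat{E}}[\xi]$; once this is done the representation follows, because $E_P[\xi]\le\mathbb{\hat{E}}[\xi]$ holds for every $P\in\mathcal{P}$ by definition, so the supremum is attained.

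For the representation I would first treat $\xi\in L_{ip}(\Omega)$, say $\xi=\varphi(B(t_1),\dots,B(t_n))$, and reduce to a finite-dimensional problem. Writing $\mathbb{\hat{E}}$ via its iterated construction (Step~2 above), each one-step operation $\varphi\mapsto\mathbb{\hat{E}}[\varphi(\cdot,B(t_{k})-B(t_{k-1}))]$ is the value at time $t_k-t_{k-1}$ of the $G$-heat equation $\partial_t u-G(D^2_{xx}u)=0$, and the Krylov/Nisio-type nonlinear Feynman--Kac representation expresses this as $\sup_{\gamma}E[\,\varphi(\cdot,\int_0^{t_k-t_{k-1}}\gamma_s\,dW_s)\,]$ over adapted $\Gamma$-valued controls $\gamma$, where $W$ is a classical Brownian motion. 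Chaining these together produces a controlled SDE $B_\gamma(t)=\int_0^t\gamma_s\,dW_s$ on a classical probability space whose law realizes a candidate measure; by compactness of $\Gamma$ and a measurable-selection argument one extracts an optimal control $\gamma^*$ (or an optimizing sequence) so that the induced law $P_\xi$ on $\Omega$ satisfies $E_{P_\xi}[\xi]=\mathbb{\hat{E}}[\xi]$. One then checks $P_\xi\in\mathcal{P}$: for any $\eta\in L_{ip}(\Omega)$, $E_{P_\xi}[\eta]=E[\eta(B_{\gamma^*})]\le\sup_\gamma E[\eta(B_\gamma)]=\mathbb{\hat{E}}[\eta]$, and the inequality extends to all of $L^1_G(\Omega)$ by density. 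Finally the general $\xi\in L^1_G(\Omega)$ is handled by approximation: take $\xi_m\in L_{ip}(\Omega)$ with $\|\xi_m-\xi\|_{1,G}\to0$, pick $P_m$ optimal for $\xi_m$, and pass to a weakly convergent subsequence $P_m\rightharpoonup P_\xi$; since $\|\xi_m-\xi\|_{1,G}\to0$ forces $\sup_{P\in\mathcal{P}}E_P[|\xi_m-\xi|]\to0$, the functionals $P\mapsto E_P[\xi_m]$ converge uniformly on $\mathcal{P}$ to $P\mapsto E_P[\xi]$, giving $E_{P_\xi}[\xi]=\lim_m E_{P_m}[\xi_m]=\lim_m\mathbb{\hat{E}}[\xi_m]=\mathbb{\hat{E}}[\xi]$.

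For weak compactness I would argue that $\mathcal{P}$ is tight and closed. Tightness follows from moment bounds: the non-degeneracy and boundedness of $\Gamma$ give $\mathbb{\hat{E}}[|B(t)-B(s)|^4]\le C|t-s|^2$, hence $E_P[|B(t)-B(s)|^4]\le C|t-s|^2$ uniformly over $P\in\mathcal{P}$, and Kolmogorov's tightness criterion on $C_0([0,\infty);\mathbb{R}^d)$ applies. Closedness under weak convergence: if $P_n\rightharpoonup P$ with $P_n\in\mathcal{P}$, then for bounded continuous cylinder functions $\xi\in L_{ip}(\Omega)$ we get $E_P[\xi]=\lim_n E_{P_n}[\xi]\le\mathbb{\hat{E}}[\xi]$, and this passes to all $\xi\in L^1_G(\Omega)$ by the density/uniform-approximation argument above, so $P\in\mathcal{P}$. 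A tight, weakly closed set of probability measures is weakly compact by Prohorov's theorem. The main obstacle I anticipate is the optimal-control/measurable-selection step in the representation: making precise the nonlinear Feynman--Kac identity for the $G$-heat equation, controlling the regularity of $u$, and extracting a genuine optimal measure (rather than only an optimizing sequence) requires care---this is exactly where the cited works \cite{DHP11,HP09} do the real work, and I would lean on their stochastic-control characterization of $G$-expectation to close this gap, restricting if necessary to the attainment-by-limit form, which is all the statement needs.
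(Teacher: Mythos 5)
First, note that the paper itself offers no proof of Theorem \ref{the2.7}: it is imported verbatim from \cite{DHP11,HP09} as a preliminary, so your sketch must be measured against the construction in those references. In outline it follows exactly that route: convexity by linearity of $P\mapsto E_P[\xi]$, the representation on $L_{ip}(\Omega)$ through the stochastic-control (nonlinear Feynman--Kac) picture of the $G$-heat equation, extension by norm-density, and weak compactness via uniform fourth-moment bounds, Kolmogorov tightness and Prohorov together with weak closedness.

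There is, however, a genuine gap, and it is not the one you flag. For a weak limit $P$ of $P_n\in\mathcal{P}$, weak convergence only yields $E_P[\varphi]\le\mathbb{\hat{E}}[\varphi]$ for \emph{bounded continuous} $\varphi$; to pass to arbitrary $\xi\in L_G^1(\Omega)$ you invoke the uniform convergence of $Q\mapsto E_Q[\xi_m]$ \emph{on} $\mathcal{P}$, but this cannot be applied at the limit point $P$ before you know $P\in\mathcal{P}$ --- indeed the very quantity $E_P[\xi]$ is not yet defined, since $\xi$ is an abstract element of the completion, determined only quasi-surely, and one must first know that $P$ does not charge polar sets. Closing this step requires the quasi-continuity and uniform-integrability characterization of $L_G^1$ (every element has a quasi-continuous version with $\mathbb{\hat{E}}[|\xi|1_{\{|\xi|>N\}}]\rightarrow0$), which is precisely the technical heart of \cite{DHP11} and is also what underlies Proposition \ref{npro-2.8}; the same issue infects your attainment argument, where the third error term $E_{P_\xi}[|\xi_{m_0}-\xi|]$ presupposes $P_\xi\in\mathcal{P}$. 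By contrast, the difficulty you single out --- extracting an optimal control by measurable selection --- can be bypassed entirely: once $\mathcal{P}$ is weakly compact and $Q\mapsto E_Q[\xi]$ is weakly continuous on $\mathcal{P}$ (Proposition \ref{npro-2.8}), the supremum is attained along any optimizing sequence, so no optimal control is ever needed. In short: right skeleton, matching the cited references, but the load-bearing analytic ingredient (quasi-continuous representation of $L_G^1$ and the resulting continuity of $Q\mapsto E_Q[\xi]$) is assumed rather than supplied, and the ``real work'' is not located where you place it.
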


The following proposition is important in our paper.

\begin{proposition}
\label{npro-2.8} (\cite{DHP11}) Let $\{P_{n}:n\geq1\} \subset\mathcal{P}$
converge weakly to $P$. Then for each $\xi\in L_{G}^{1}(\Omega)$, we have
$E_{P_{n}}[\xi]\rightarrow E_{P}[\xi]$.
\end{proposition}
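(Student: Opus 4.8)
The plan is to reduce the statement for a general $\xi \in L^1_G(\Omega)$ to the case of a "nice" random variable in $L_{ip}(\Omega)$ (or at least a bounded continuous one), for which weak convergence applies directly, and then to control the passage to the limit uniformly in $n$ by exploiting the weak compactness of $\mathcal P$ from Theorem \ref{the2.7}. First I would fix $\varepsilon > 0$ and use the definition of $L^1_G(\Omega)$ as the $\|\cdot\|_{1,G}$-completion of $L_{ip}(\Omega)$ to choose $\eta \in L_{ip}(\Omega)$ with $\hat{\mathbb E}[|\xi - \eta|] < \varepsilon$. Since every $P \in \mathcal P$ satisfies $E_P[\,\cdot\,] \le \hat{\mathbb E}[\,\cdot\,]$, we get the key uniform bound $E_P[|\xi - \eta|] \le \hat{\mathbb E}[|\xi - \eta|] < \varepsilon$ for all $P \in \mathcal P$, and in particular for $P$ and every $P_n$ in the weakly convergent sequence.

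Next I would estimate
\[
|E_{P_n}[\xi] - E_P[\xi]| \le E_{P_n}[|\xi - \eta|] + |E_{P_n}[\eta] - E_P[\eta]| + E_P[|\xi - \eta|] < 2\varepsilon + |E_{P_n}[\eta] - E_P[\eta]|.
\]
It then remains to show $|E_{P_n}[\eta] - E_P[\eta]| \to 0$ as $n \to \infty$ for $\eta \in L_{ip}(\Omega)$. If $\eta$ were bounded and continuous on $\Omega$ this would be immediate from the definition of weak convergence of probability measures; the point is that an element $\eta = \varphi(B(t_1), \dots, B(t_k))$ with $\varphi \in C_{b.Lip}(\mathbb R^{d\times k})$ is a bounded continuous functional of the canonical process, hence a bounded continuous function on $\Omega$ in the topology of locally uniform convergence, so $E_{P_n}[\eta] \to E_P[\eta]$ follows directly. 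Combining, $\limsup_n |E_{P_n}[\xi] - E_P[\xi]| \le 2\varepsilon$, and since $\varepsilon$ is arbitrary the claim follows.

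The main obstacle, and the only place any care is genuinely needed, is the justification that $L_{ip}(\Omega)$ random variables are bounded continuous on $\Omega$ in the appropriate topology — i.e. that one may legitimately invoke the portmanteau/weak-convergence property against them. This hinges on the fact that the $P_n$ and $P$ all live on $(\Omega, \mathcal B(\Omega))$ with $\Omega = C_0([0,\infty); \mathbb R^d)$ equipped with the topology of uniform convergence on compacts, under which finite-dimensional evaluation maps $\omega \mapsto (\omega(t_1), \dots, \omega(t_k))$ are continuous; composing with the bounded Lipschitz $\varphi$ then gives genuine membership in $C_b(\Omega)$. Everything else is the soft "$\varepsilon$-approximation plus uniform bound" argument above, where the uniform bound over all of $\mathcal P$ — not merely over the sequence — is supplied for free by the defining inequality $E_P \le \hat{\mathbb E}$ in \eqref{pr12345}.
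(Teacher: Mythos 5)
Your argument is correct. Note that the paper does not prove Proposition \ref{npro-2.8} at all — it is quoted from \cite{DHP11} — and your proof (approximate $\xi$ in $\Vert\cdot\Vert_{1,G}$ by $\eta\in L_{ip}(\Omega)$, observe that $\eta$ is a bounded continuous function on $\Omega$ in the topology of locally uniform convergence so weak convergence applies to it, and control the two error terms uniformly in $n$ via $E_{Q}[|\xi-\eta|]\leq\mathbb{\hat{E}}[|\xi-\eta|]$ for every $Q\in\mathcal{P}$) is precisely the standard density-plus-uniform-domination argument used in that reference. The only points you use implicitly are that $|\xi-\eta|\in L_{G}^{1}(\Omega)$ (true, since $L_{G}^{1}(\Omega)$ is stable under Lipschitz transformations) and that $E_{P}[\xi]$ is well defined for elements of the completion, which rests on the quasi-continuity identification of $L_{G}^{1}(\Omega)$ in \cite{DHP11} and is already presupposed by the statement itself.
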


\begin{definition}
\label{def2.6} Let $M_{G}^{0}(0,T)$ be the collection of processes in the
following form: for a given partition $\{t_{0},\cdot\cdot\cdot,t_{N}\}=\pi
_{T}$ of $[0,T]$,
\[
\eta(t)=\sum_{j=0}^{N-1}\xi_{j}I_{[t_{j},t_{j+1})}(t),
\]
where $\xi_{i}\in L_{ip}(\Omega_{t_{i}})$, $i=0,1,2,\cdot\cdot\cdot,N-1$.
\end{definition}

We denote by $M_{G}^{p}(0,T)$ the completion of $M_{G}^{0}(0,T)$ under the
norm $\Vert\eta\Vert_{M_{G}^{p}}=\{ \mathbb{\hat{E}}[\int_{0}^{T}|\eta
(s)|^{p}ds]\}^{1/p}$ for $p\geq1$. The It\^{o}'s integral $\int_{0}^{T}%
\eta(s)dB(s)$ is well defined for $\eta\in M_{G}^{2}(0,T)$.

\section{Stochastic optimal control problem}

We first give the definition of admissible controls.

\begin{definition}
$u(\cdot)$ is said to be an admissible control on $[0,T]$, if it satisfies the
following conditions:

\begin{description}
\item[(i)] $u(\cdot):[0,T]\times\Omega\rightarrow U$ where $U$ is a nonempty
convex subset of $\mathbb{R}^{m}$;

\item[(ii)] $u(\cdot)\in M_{G}^{\beta}(0,T;\mathbb{R}^{m})$ with $\beta>2$.
\end{description}
\end{definition}

The set of admissible controls is denoted by $\mathcal{U}[0,T]$.

In the rest of this paper, we use the Einstein summation convention.

Let $u(\cdot)\in\mathcal{U}[0,T]$. Consider the following forward and backward
SDEs driven by $G$-Brownian motion: for $t\in\lbrack0,T],$%
\begin{equation}
\left\{
\begin{array}
[c]{rl}%
dx(t)= & b(t,x(t),u(t))dt+h^{ij}(t,x(t),u(t))d\langle B^{i},B^{j}%
\rangle(t)+\sigma^{i}(t,x(t),u(t))dB^{i}(t),\\
x(0)= & x_{0}\in\mathbb{R}^{n},
\end{array}
\right.  \label{state-1}%
\end{equation}%
\begin{equation}
\left\{
\begin{array}
[c]{rl}%
-dy(t)= & f(t,x(t),y(t),z(t),u(t))dt+g^{ij}(t,x(t),y(t),z(t),u(t))d\langle
B^{i},B^{j}\rangle(t)-z(t)dB(t)-dK(t),\\
y(T)= & \phi(x(T)),
\end{array}
\right.  \label{state-2}%
\end{equation}
where
\[%
\begin{array}
[c]{l}%
b:[0,T]\times\mathbb{R}^{n}\times U\rightarrow\mathbb{R}^{n}\text{;}\\
h^{ij}:[0,T]\times\mathbb{R}^{n}\times U\rightarrow\mathbb{R}^{n}\text{;}\\
\sigma=[\sigma^{1},\ldots,\sigma^{d}]:[0,T]\times\mathbb{R}^{n}\times
U\rightarrow\mathbb{R}^{n\times d}\text{;}\\
f:[0,T]\times\mathbb{R}^{n}\times\mathbb{R}\times\mathbb{R}^{1\times d}\times
U\rightarrow\mathbb{R}\text{;}\\
g^{ij}:[0,T]\times\mathbb{R}^{n}\times\mathbb{R}\times\mathbb{R}^{1\times
d}\times U\rightarrow\mathbb{R}\text{;}\\
\phi:\mathbb{R}^{n}\rightarrow\mathbb{R}.
\end{array}
\]

Denote%
\[%
\begin{array}
[c]{l}%
S_{G}^{0}(0,T)=\{h(t,B_{t_{1}\wedge t},\cdot\cdot\cdot,B_{t_{n}\wedge
t}):t_{1},\ldots,t_{n}\in\lbrack0,T],h\in C_{b,Lip}(\mathbb{R}^{n+1})\};\\
S_{G}^{2}(0,T)=\{ \text{the completion of }S_{G}^{0}(0,T)\text{ under the norm
}\Vert\eta\Vert_{S_{G}^{2}}=\{ \mathbb{\hat{E}}[\sup_{t\in\lbrack0,T]}%
|\eta_{t}|^{2}]\}^{\frac{1}{2}}\}.
\end{array}
\]

For given $u(\cdot)\in\mathcal{U}[0,T]$, $x(\cdot)$ and $(y(\cdot
),z(\cdot),K(\cdot))$ are called solutions of the above forward and backward
SDEs respectively if $x(\cdot)\in M_{G}^{2}(0,T;\mathbb{R}^{n})$;
$(y(\cdot),z(\cdot))\in S_{G}^{2}(0,T)\times M_{G}^{2}(0,T;\mathbb{R}^{1\times
d})$; $K(\cdot)$ is a decreasing $G$-martingale$\ $with $K(0)=0$ and $K(T)\in
L_{G}^{2}(\Omega_{T});$ (\ref{state-1}) and (\ref{state-2}) are satisfied respectively.

We assume:\medskip

\begin{description}
\item[(H1)] $b,h^{ij},\sigma,f,g^{ij},\phi$ are continuous and differentiable
in $(x,y,z,u);$

\item[(H2)] The derivatives of $b,h^{ij},\sigma,f,g^{ij},\phi$ in $(x,y,z,u)$
are bounded;

\item[(H3)] There exists a modulus of continuity $\bar{\omega}:[0,\infty
)\rightarrow\lbrack0,\infty)$ such that for any $t\in\lbrack0,T]$, $x$,
$x^{\prime}\in\mathbb{R}^{n}$, $y$, $y^{\prime}\in\mathbb{R}$, $z$,
$z^{\prime}\in\mathbb{R}^{1\times d}$, $u$, $u^{\prime}\in\mathbb{R}^{m}$,%
\[
|\varphi(t,x,y,z,u)-\varphi(t,x^{\prime},y^{\prime},z^{\prime},u^{\prime
})|\leq\bar{\omega}(|x-x^{\prime}|+|y-y^{\prime}|+|z-z^{\prime}|+|u-u^{\prime
}|),
\]
where $\varphi$ is the derivatives of $b,h^{ij},\sigma,f,g^{ij},\phi$ in
$(x,y,z,u)$.
\end{description}

We have the following theorems.

\begin{theorem}
(\cite{P10}) Let assumptions (H1)-(H2) hold. Then (\ref{state-1}) has a unique
solution $x(\cdot)$.
\end{theorem}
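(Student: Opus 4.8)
The plan is a standard Picard contraction argument, transcribed from classical It\^o SDE theory into the $G$-framework, with the $G$-It\^o isometry and Burkholder--Davis--Gundy-type estimates replacing their probabilistic counterparts. Fix $u(\cdot)\in\mathcal{U}[0,T]$. First I would extract from (H1)--(H2) that $b,h^{ij},\sigma^i$ are Lipschitz in $x$, uniformly in $(t,\omega,u)$, with a common constant $L$, and satisfy the linear growth bound $|b(t,x,u)|+\sum_{i,j}|h^{ij}(t,x,u)|+\sum_i|\sigma^i(t,x,u)|\le C(1+|x|+|u|)$. Since $u(\cdot)\in M_G^{\beta}(0,T;\mathbb{R}^m)\subset M_G^2(0,T;\mathbb{R}^m)$, the ``frozen'' processes $s\mapsto b(s,0,u(s))$, $h^{ij}(s,0,u(s))$, $\sigma^i(s,0,u(s))$ all belong to $M_G^2(0,T)$.

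Next I would define $\Lambda:M_G^2(0,T;\mathbb{R}^n)\to M_G^2(0,T;\mathbb{R}^n)$ by
\[
\Lambda(x)(t)=x_0+\int_0^t b(s,x(s),u(s))\,ds+\int_0^t h^{ij}(s,x(s),u(s))\,d\langle B^i,B^j\rangle(s)+\int_0^t \sigma^i(s,x(s),u(s))\,dB^i(s),
\]
and verify $\Lambda$ is well defined: the $ds$-integral is handled by the Cauchy--Schwarz inequality, the $d\langle B^i,B^j\rangle$-integral by the fact that in the $G$-framework $\langle B^i,B^j\rangle$ has bounded variation dominated by a constant multiple of Lebesgue measure, and the $dB^i$-integral by the $G$-It\^o estimate $\mathbb{\hat{E}}[|\int_0^T \eta\,dB^i|^2]\le C\,\mathbb{\hat{E}}[\int_0^T|\eta|^2\,ds]$. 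Combined with linear growth and the integrability of the frozen coefficients, this gives $\Lambda(x)\in M_G^2(0,T;\mathbb{R}^n)$.

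Then I would show that some power of $\Lambda$ is a strict contraction. Applying the three estimates above to $\Lambda(x)-\Lambda(x')$ and using the Lipschitz bound yields
\[
\mathbb{\hat{E}}\Big[\sup_{s\le t}|\Lambda(x)(s)-\Lambda(x')(s)|^2\Big]\le C_{L,T}\int_0^t \mathbb{\hat{E}}\Big[\sup_{r\le s}|x(r)-x'(r)|^2\Big]\,ds .
\]
Iterating this $n$ times (a Gronwall-type bootstrap) produces a factor $(C_{L,T}T)^n/n!$, so $\Lambda^n$ is a contraction for $n$ large; hence $\Lambda$ has a unique fixed point in $M_G^2(0,T;\mathbb{R}^n)$, which is the unique solution $x(\cdot)$ of (\ref{state-1}). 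Uniqueness can also be read off directly: the difference of two solutions satisfies the same inequality, and Gronwall forces it to vanish. Equivalently, one runs the contraction on a short interval $[0,\delta]$ with $C_{L,T}\delta<1$ and concatenates finitely many pieces to cover $[0,T]$.

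The only genuinely non-elementary ingredient is the suite of $G$-stochastic-calculus estimates --- the $G$-It\^o isometry, the BDG-type inequality for $\int\sigma\,dB$, and the domination of $d\langle B^i,B^j\rangle$ by $dt$ --- which is precisely why the statement is quoted from \cite{P10}; granting those, the rest is the classical fixed-point scheme verbatim. The main point to watch is simply tracking the constants so that the nonlinearity of $\mathbb{\hat{E}}[\cdot]$ (sub-additivity rather than linearity) is only ever invoked in the direction that the above chain of inequalities requires, which is the case throughout.
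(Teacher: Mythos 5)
The paper gives no proof of this statement---it is quoted from \cite{P10}---and your Picard/contraction argument in $M_G^2(0,T;\mathbb{R}^n)$, using the Lipschitz and linear-growth bounds coming from (H1)--(H2), the $G$-It\^{o} isometry-type estimate, and the domination of $d\langle B^i,B^j\rangle$ by $dt$, is exactly the standard proof in that reference, so your proposal is correct and takes essentially the same route. The only cosmetic remark is that the fixed point can be obtained directly in the norm $\mathbb{\hat{E}}[\int_0^T|\cdot|^2\,dt]$, so the BDG-type control of the supremum inside $\mathbb{\hat{E}}$ is convenient but not strictly necessary.
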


\begin{theorem}
(\cite{HJPS1}) Let assumptions (H1)-(H2) hold. Then (\ref{state-2}) has a
unique solution $(y(\cdot),z(\cdot),K(\cdot))$.
\end{theorem}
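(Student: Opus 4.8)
The statement is the existence and uniqueness theorem for $G$-BSDEs from \cite{HJPS1} specialized to the present data, so the plan is to reduce \eqref{state-2} to a standard Lipschitz $G$-BSDE and then run the fixed-point argument of that paper. Fix $u(\cdot)\in\mathcal{U}[0,T]$ and let $x(\cdot)$ be the solution of \eqref{state-1}. Set
\[
\tilde{f}(t,y,z):=f(t,x(t),y,z,u(t)),\qquad \tilde{g}^{ij}(t,y,z):=g^{ij}(t,x(t),y,z,u(t)),\qquad \tilde{\xi}:=\phi(x(T)).
\]
By (H1)--(H2) the maps $\tilde{f},\tilde{g}^{ij}$ are Lipschitz in $(y,z)$ uniformly in $(t,\omega)$; moreover, using the linear growth in $(x,y,z,u)$ that follows from (H2) together with the integrability of $x(\cdot)$ and of $u(\cdot)\in M_G^{\beta}(0,T;\mathbb{R}^m)$ with $\beta>2$, one checks that $\tilde{\xi}\in L_G^{\beta}(\Omega_T)$ and $\tilde{f}(\cdot,0,0),\tilde{g}^{ij}(\cdot,0,0)\in M_G^{\beta}(0,T)$. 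This is exactly the integrability used below to produce a decreasing $G$-martingale $K$ with $K(T)\in L_G^2(\Omega_T)$, so it suffices to solve the $G$-BSDE with data $(\tilde{\xi},\tilde{f},\tilde{g}^{ij})$.

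First I would treat the case in which $\tilde{f},\tilde{g}^{ij}$ do not depend on $(y,z)$. Put
\[
Y(t):=\mathbb{\hat{E}}_t\Big[\tilde{\xi}+\int_t^T\tilde{f}(s)\,ds+\int_t^T\tilde{g}^{ij}(s)\,d\langle B^i,B^j\rangle(s)\Big].
\]
Then $M(t):=Y(t)+\int_0^t\tilde{f}(s)\,ds+\int_0^t\tilde{g}^{ij}(s)\,d\langle B^i,B^j\rangle(s)$ is $\mathbb{\hat{E}}_t$ of a fixed element of $L_G^{\beta}(\Omega_T)$, hence a $G$-martingale. Applying the martingale decomposition theorem for $G$-martingales (see \cite{HJPS1}) gives $z\in M_G^2(0,T;\mathbb{R}^{1\times d})$ and a decreasing $G$-martingale $K$ with $K(0)=0$, $K(T)\in L_G^2(\Omega_T)$, such that $M(t)=M(0)+\int_0^t z(s)\,dB(s)+K(t)$. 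Substituting back and evaluating at $t=T$ shows that $(Y,z,K)$ solves the $G$-BSDE with data $(\tilde{\xi},\tilde{f},\tilde{g}^{ij})$.

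Next come the a priori estimates, obtained by applying $G$-It\^{o}'s formula to $|y(t)|^2$. The non-degeneracy of $G$ (which forces the mutual variation $(d\langle B^i,B^j\rangle)$ to dominate $\underline{\sigma}^2 I\,dt$) lets one bound $\int_0^T|z(s)|^2\,ds$ from the quadratic-variation term, thereby controlling $\|z\|_{M_G^2}$; and reading $K(T)=y(0)-\tilde{\xi}-\int_0^T\tilde{f}(s)\,ds-\int_0^T\tilde{g}^{ij}(s)\,d\langle B^i,B^j\rangle(s)+\int_0^T z(s)\,dB(s)$ off the equation controls $\|K(T)\|_{2,G}$ once $\|y\|_{S_G^2}$ and $\|z\|_{M_G^2}$ are known. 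Combined with a $G$-Gronwall argument this yields
\[
\|y\|_{S_G^2}^2+\|z\|_{M_G^2}^2+\|K(T)\|_{2,G}^2\leq C\Big(\|\tilde{\xi}\|_{2,G}^2+\|\tilde{f}(\cdot,0,0)\|_{M_G^2}^2+\sum_{i,j}\|\tilde{g}^{ij}(\cdot,0,0)\|_{M_G^2}^2\Big),
\]
and the same computation applied to the difference of two solutions gives the corresponding stability estimate in terms of the difference of the data.

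For general Lipschitz generators I would run a Picard iteration: starting from $(y^0,z^0)=(0,0)$, let $(y^{k+1},z^{k+1},K^{k+1})$ be the solution produced in the previous step for the $(y,z)$-free generators $s\mapsto\tilde{f}(s,y^k(s),z^k(s))$ and $s\mapsto\tilde{g}^{ij}(s,y^k(s),z^k(s))$. Using the stability estimate, the Lipschitz bounds, and an exponential weight $e^{\mu s}$ with $\mu$ chosen large, the map $(y^k,z^k)\mapsto(y^{k+1},z^{k+1})$ is a contraction on $S_G^2(0,T)\times M_G^2(0,T;\mathbb{R}^{1\times d})$; its fixed point, together with the associated $K$, solves \eqref{state-2}, and uniqueness is immediate from the stability estimate applied to two solutions with the same data. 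I expect the main obstacle to be the treatment of the decreasing $G$-martingale $K$: it has no martingale representation and is not a classical semimartingale, so the term $\int y\,dK$ that appears after applying $G$-It\^{o}'s formula cannot be handled by ordinary Burkholder--Davis--Gundy arguments and instead requires the delicate moment estimate on $K(T)$ from \cite{HJPS1} — precisely the point where the $G$-BSDE theory is genuinely harder than the classical one, and the reason the $\beta>2$ integrability of the data is imposed.
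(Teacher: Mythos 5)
The theorem is not proved in this paper at all; it is quoted from \cite{HJPS1}, so your sketch must be measured against the proof given there, and it diverges at the decisive point. Your reduction to a Lipschitz $G$-BSDE and your treatment of the case where the generators do not depend on $(y,z)$ are in the right spirit: one indeed sets $Y(t)=\mathbb{\hat{E}}_t[\tilde{\xi}+\int_t^T\tilde{f}\,ds+\int_t^T\tilde{g}^{ij}\,d\langle B^i,B^j\rangle]$ and invokes the decomposition theorem for $G$-martingales. But that theorem, applied to a $G$-martingale whose terminal value lies in $L_G^{\beta}(\Omega_T)$, only produces $z\in M_G^{\alpha}$ and $K(T)\in L_G^{\alpha}(\Omega_T)$ for $\alpha$ strictly less than $\beta$, and this loss of integrability is exactly what ruins the Picard scheme you propose: once the iterates $(y^k,z^k)$ live only in the order-$2$ spaces, the quantity $\tilde{\xi}+\int_0^T\tilde{f}(s,y^k,z^k)ds+\cdots$ is only order-$2$ integrable, so the decomposition returns $(z^{k+1},K^{k+1})$ of order strictly below $2$, and the map you want to be a contraction is not even a self-map of $S_G^2(0,T)\times M_G^2(0,T;\mathbb{R}^{1\times d})$. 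A second obstruction is that the stability estimate for the $z$-part of two $G$-BSDE solutions (of the type of Proposition 3.8 in \cite{HJPS}, which the present paper uses) is not of the clean Lipschitz form your contraction needs: it bounds $\|\hat{z}\|_{M_G^2}^2$ by $\|\hat{y}\|_{S_G^2}$ multiplied by norms of both solutions and of the data, and this square-root structure prevents the usual ``exponential weight with large $\mu$'' argument from closing. So the assertion that the iteration map is a contraction is a genuine gap, not a routine verification; the decreasing $G$-martingale $K$ is not a nuisance that a moment estimate on $K(T)$ disposes of.

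This is precisely why \cite{HJPS1} proves existence by a different route: first uniform a priori estimates for $(Y,Z,K)$ and stability estimates; then an explicit construction of solutions when the terminal value and generators are cylinder ($L_{ip}$-type) functionals, obtained by working backwards over a partition of $[0,T]$ and representing the solution on each subinterval through the associated nonlinear parabolic PDE (after a suitable non-degenerate approximation to secure enough regularity), which yields $Y$, $Z$ and $K$ in essentially closed form; and finally a density/approximation argument, powered by the a priori estimates, to pass to general data in $L_G^{\beta}$ with $\beta>2$. The assumption $\beta>2$ enters because of the integrability loss just described and because the estimates for $K$ and $Z$ require higher moments of the data, not only at the last step you mention. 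If you want to retain a fixed-point flavour you would have to explain how to keep the iterates uniformly in a space of order larger than $2$ and how to obtain a genuine contraction despite the weak form of the $z$-stability estimate; as written, the argument does not go through, and the cited proof proceeds differently.
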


The state equation of our stochastic optimal control problem is governed by
the above forward and backward SDEs (\ref{state-1}) and (\ref{state-2}). The
cost functional is introduced by the solution of the BSDE (\ref{state-2}) at
time $0$, i.e.,
\[
J(u(\cdot))=y(0).
\]
The stochastic optimal control problem is to minimize the cost functional over
$\mathcal{U}[0,T]$.

\begin{remark}
We point out that $\mathcal{U}[0,T]$ contains all feedback controls (see Hu
and Ji \cite{HJ}). In the last section, we show that the optimal control of
the LQ problem is a special kind of feedback control.
\end{remark}

In summary, our stochastic control problem is%
\[
\left\{
\begin{array}
[c]{rl}%
\text{Minimize} & J(u(\cdot))\\
\text{subject to} & u(\cdot)\in\mathcal{U}[0,T].
\end{array}
\right.
\]

\section{Stochastic Maximum Principle}

In this section, to ease the presentation we only study the case where
$h_{ij}\equiv0$, $g_{ij}\equiv0$ and $f$ does not include $z$ term. We will
present the results for the general case in Section 5.

\subsection{Variational equation}

Let $\bar{u}(\cdot)$ be optimal and $(\bar{x}(\cdot),\bar{y}(\cdot),\bar
{z}(\cdot),\bar{K}(\cdot))$ be the corresponding state processes of
(\ref{state-1}) and (\ref{state-2}). Take an arbitrary $u(\cdot)\in
\mathcal{U}[0,T]$. Since $\mathcal{U}[0,T]$ is convex, then, for each
$0\leq\rho\leq1$, $\bar{u}(\cdot)+\rho(u(\cdot)-\bar{u}(\cdot))\in
\mathcal{U}[0,T]$. Let $(x_{\rho}(\cdot),y_{\rho}(\cdot),z_{\rho}%
(\cdot),K_{\rho}(\cdot))$ be the state processes of (\ref{state-1}) and
(\ref{state-2}) associated with $\bar{u}(\cdot)+\rho(u(\cdot)-\bar{u}(\cdot))$.

To derive the first-order necessary condition in terms of small $\rho$, let
$\hat{x}(\cdot)$ be the solution of the following SDE:%
\begin{equation}
\left\{
\begin{array}
[c]{rl}%
d\hat{x}(t)= & [b_{x}(t)\hat{x}(t)+b_{u}(t)(u(t)-\bar{u}(t))]dt+[\sigma
_{x}^{i}(t)\hat{x}(t)+\sigma_{u}^{i}(t)(u(t)-\bar{u}(t))]dB^{i}(t),\\
\hat{x}(0)= & 0,
\end{array}
\right.  \label{variational-eq-1}%
\end{equation}
where $b_{x}(t)=b_{x}(t,\bar{x}(t),\bar{u}(t))$, $b_{u}(t)=b_{u}(t,\bar
{x}(t),\bar{u}(t))$, $\sigma_{x}^{i}(t)=\sigma_{x}^{i}(t,\bar{x}(t),\bar
{u}(t))$, $\sigma_{u}^{i}(t)=\sigma_{u}^{i}(t,\bar{x}(t),\bar{u}(t))$.

In this paper, we define%
\[
b_{x}(t)=\left[
\begin{array}
[c]{ccc}%
b_{1x_{1}}(t), & \cdots & ,b_{1x_{n}}(t)\\
\vdots &  & \vdots\\
b_{nx_{1}}(t), & \cdots & ,b_{nx_{n}}(t)
\end{array}
\right]  .
\]
The other derivatives are defined similarly.

Equation (\ref{variational-eq-1}) is called the variational equation for SDE
(\ref{state-1}). By Theorem 1.2 in \cite{P10}, there exists a unique solution
$\hat{x}(\cdot)\in M_{G}^{2}(0,T;\mathbb{R}^{n})$ to equation
(\ref{variational-eq-1}).

Set%
\[
\tilde{x}_{\rho}(t)=\rho^{-1}[x_{\rho}(t)-\bar{x}(t)]-\hat{x}(t).
\]

\begin{proposition}
\label{variational derivative-forward}Assume (H1)-(H3) hold. Then

\begin{description}
\item[(i)] there exists a positive constant $C$ such that $\mathbb{\hat{E}%
}[\mid\tilde{x}_{\rho}(t)\mid^{2}]\leq C$ for $0\leq\rho\leq1$;

\item[(ii)] $\underset{\rho\rightarrow0}{\lim}\underset{0\leq t\leq T}{\sup
}\mathbb{\hat{E}}[\mid\tilde{x}_{\rho}(t)\mid^{2}]=0.$
\end{description}
\end{proposition}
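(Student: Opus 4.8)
The plan is to treat the two assertions as a standard ``second-order remainder estimate'' for a linear(ized) $G$-SDE, adapted to the $G$-framework. First I would write down the equation satisfied by $\tilde{x}_{\rho}(\cdot)$. Expanding $b$ and $\sigma^{i}$ along the segment joining $(\bar{x}(t),\bar{u}(t))$ to $(x_{\rho}(t),\bar{u}(t)+\rho(u(t)-\bar{u}(t)))$ by the fundamental theorem of calculus, and subtracting the variational equation (\ref{variational-eq-1}), one gets that $\tilde{x}_{\rho}$ solves a linear $G$-SDE of the form
\[
d\tilde{x}_{\rho}(t)=[b_{x}(t)\tilde{x}_{\rho}(t)+\alpha_{\rho}(t)]dt+[\sigma_{x}^{i}(t)\tilde{x}_{\rho}(t)+\beta_{\rho}^{i}(t)]dB^{i}(t),\quad \tilde{x}_{\rho}(0)=0,
\]
where $\alpha_{\rho},\beta_{\rho}^{i}$ collect the difference quotients of the coefficients minus their frozen derivatives; e.g. $\alpha_{\rho}(t)=\int_{0}^{1}[b_{x}(t,\bar{x}+\lambda\rho(\hat{x}+\tilde{x}_{\rho}),\bar{u}+\lambda\rho(u-\bar{u}))-b_{x}(t)](\hat{x}(t)+\tilde{x}_{\rho}(t))\,d\lambda+\int_{0}^{1}[b_{u}(\cdots)-b_{u}(t)](u(t)-\bar{u}(t))\,d\lambda$, and similarly for $\beta_{\rho}^{i}$ using $\sigma^{i}$.

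For part (i), I would first recall the a priori estimate for linear $G$-SDEs (Theorem~1.2 in \cite{P10} together with the standard Burkholder--Davis--Gundy-type and Gronwall estimates available in the $G$-framework): since $b_{x},\sigma_{x}^{i}$ are bounded by (H2), one has
\[
\mathbb{\hat{E}}\Big[\sup_{0\leq t\leq T}|\tilde{x}_{\rho}(t)|^{2}\Big]\leq C\,\mathbb{\hat{E}}\Big[\int_{0}^{T}(|\alpha_{\rho}(t)|^{2}+|\beta_{\rho}(t)|^{2})\,dt\Big].
\]
By (H3) the integrands $\alpha_{\rho},\beta_{\rho}^{i}$ are bounded by $\bar{\omega}(\rho(|\hat{x}|+|\tilde{x}_{\rho}|+|u-\bar{u}|))\cdot(|\hat{x}|+|\tilde{x}_{\rho}|)+\bar{\omega}(\cdots)|u-\bar{u}|$; using that $\bar{\omega}$ is a modulus of continuity (hence bounded on bounded sets, and in any case $\bar\omega(r)\le C(1+r)$ after noting the derivatives themselves are bounded), one crudely bounds $|\alpha_{\rho}|^{2}+|\beta_{\rho}|^{2}\leq C(1+|\hat{x}|^{2}+|\tilde{x}_{\rho}|^{2}+|u-\bar{u}|^{2})$. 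Plugging this back and using $\mathbb{\hat{E}}[\int_{0}^{T}|\hat{x}|^{2}]<\infty$ (since $\hat{x}\in M_{G}^{2}$), $u\in M_{G}^{\beta}\subset M_{G}^{2}$, a Gronwall argument in $G$-expectation closes the loop and yields the uniform bound $\mathbb{\hat{E}}[|\tilde{x}_{\rho}(t)|^{2}]\leq C$ independent of $\rho$.

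For part (ii), the key point is that $\alpha_{\rho},\beta_{\rho}^{i}\to0$ in $M_{G}^{2}(0,T)$ as $\rho\to0$. Intuitively, inside $\bar{\omega}(\cdot)$ the argument $\rho(|\hat{x}(t)|+|\tilde{x}_{\rho}(t)|+|u(t)-\bar{u}(t)|)\to0$ in an appropriate sense because of the uniform bound from (i); hence $\bar{\omega}$ of it tends to $0$, and then one multiplies by the $L^{2}$-bounded factor $(|\hat{x}|+|\tilde{x}_{\rho}|+|u-\bar{u}|)$. Making this rigorous in the $G$-expectation setting — where there is no single dominating measure and one must argue via the representation $\mathbb{\hat{E}}[\cdot]=\sup_{P\in\mathcal{P}}E_{P}[\cdot]$ with $\mathcal{P}$ weakly compact, or via a uniform-integrability/truncation argument on $\bar\omega$ — is the main obstacle. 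The cleanest route is: split $\bar\omega$ into $\bar\omega\wedge\varepsilon$ plus a remainder supported on the event $\{\rho(|\hat x|+|\tilde x_\rho|+|u-\bar u|)\ge\delta_\varepsilon\}$; the first piece contributes $\le C\varepsilon$, and the second is controlled by Chebyshev in $G$-expectation using the $M_G^2$ (indeed $M_G^\beta$ for $u$, giving room) bounds, so it vanishes as $\rho\to0$ for fixed $\varepsilon$. Once $\|\alpha_{\rho}\|_{M_{G}^{2}}+\|\beta_{\rho}\|_{M_{G}^{2}}\to0$ is established, the linear a priori estimate above immediately gives $\sup_{0\le t\le T}\mathbb{\hat{E}}[|\tilde{x}_{\rho}(t)|^{2}]\to0$, and in fact $\mathbb{\hat{E}}[\sup_{0\le t\le T}|\tilde x_\rho(t)|^2]\to 0$.
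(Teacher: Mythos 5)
Your part (i) is fine and essentially coincides with the paper's argument (It\^{o}'s formula plus Gronwall, with the source terms bounded via (H2)); the only difference is a harmless rearrangement of the linearized equation. The problem is in part (ii), and it is caused precisely by that rearrangement. You freeze the homogeneous coefficient at $b_{x}(t),\sigma_{x}^{i}(t)$, which forces the remainder $\alpha_{\rho}$ to contain the term $\int_{0}^{1}[b_{x}(t,\bar{x}+\lambda\rho(\hat{x}+\tilde{x}_{\rho}),\cdot)-b_{x}(t)]d\lambda\,(\hat{x}(t)+\tilde{x}_{\rho}(t))$, i.e.\ a factor $\tilde{x}_{\rho}$ itself multiplying a non-small (only bounded) coefficient. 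Your truncation scheme then requires showing $\mathbb{\hat{E}}[\int_{0}^{T}I_{\{\rho(|\hat{x}|+|\tilde{x}_{\rho}|+|u-\bar{u}|)\geq\delta_{\varepsilon}\}}|\tilde{x}_{\rho}(t)|^{2}dt]\rightarrow0$, and "Chebyshev using the $M_{G}^{2}$ bounds" does not give this: Chebyshev only makes $\mathbb{\hat{E}}[\int_{0}^{T}I_{\{\cdots\}}dt]$ small, and an $L^{1}$-type bound on the family $\{|\tilde{x}_{\rho}|^{2}\}_{\rho}$ (which is all part (i) provides) is not a uniform-integrability statement uniform in $\rho$; a small-measure set can still carry mass of order one for a $\rho$-dependent integrand. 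The extra room you invoke ($u\in M_{G}^{\beta}$, $\beta>2$) helps with the $|u-\bar{u}|^{2}$ and $|\hat{x}|^{2}$ factors, but not with $|\tilde{x}_{\rho}|^{2}$ unless you first prove a uniform-in-$\rho$ bound on a moment of $\tilde{x}_{\rho}$ strictly higher than $2$ (or a uniform-in-$\rho$ analogue of the paper's uniform-integrability lemma), which you neither state nor prove.

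The gap is fixable in two ways. Either redo part (i) at the level of $\beta$-th moments (possible since $u\in M_{G}^{\beta}$ and the derivatives are bounded) and then close your estimate by H\"older with exponents $\beta/2$ and its conjugate; or, more simply, use the paper's decomposition: keep the averaged coefficient $A_{\rho}(t)=\int_{0}^{1}b_{x}(t,\bar{x}+\lambda\rho(\hat{x}+\tilde{x}_{\rho}),\bar{u}+\lambda\rho(u-\bar{u}))d\lambda$ (and $B_{\rho}^{i}$) in the homogeneous part — it is bounded by (H2), so Gronwall still applies — so that the source terms $C_{\rho},D_{\rho}^{i}$ involve only the fixed processes $\hat{x}$ and $u-\bar{u}$. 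Then the uniform-integrability lemma for a single element of $M_{G}^{1}$ (the paper's Lemma \ref{uniform property}, applied to $|\hat{x}|^{2}$ and $|u-\bar{u}|^{2}$), combined with Chebyshev on the exceptional sets (where the uniform $M_{G}^{2}$ bound on $\tilde{x}_{\rho}$ enters only through the measure of the set, not as an integrand), yields $I_{\rho}\rightarrow0$ and hence (ii).
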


In the following, we always use the constant $C$ for simplicity, where $C$ can
be change from line to line. For prove this proposition, we need the following lemma.

\begin{lemma}
\label{uniform property} Suppose that $\eta$ belongs to $M_{G}^{1}(0,T)$. Then
for each $\varepsilon>0$, there exists a positive number $\delta$ such that
$\mathbb{\hat{E}}[\int_{0}^{T}\mid\eta\mid I_{A}dt]<\varepsilon$ for any
$A\in\mathcal{B}([0,T])\times\mathcal{F}_{T}$ with $\mathbb{\hat{E}}[\int%
_{0}^{T}I_{A}(t,\omega)dt]<\delta$.
\end{lemma}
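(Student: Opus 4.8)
The plan is to prove a standard absolute-continuity / uniform-integrability statement for the $G$-expectation of the time-space integral of $|\eta|$. First I would reduce to the case $\eta\in M_G^0(0,T)$: since $M_G^0(0,T)$ is dense in $M_G^1(0,T)$, given $\varepsilon>0$ pick a simple process $\eta^\varepsilon=\sum_{j}\xi_j I_{[t_j,t_{j+1})}$ with $\mathbb{\hat{E}}[\int_0^T |\eta-\eta^\varepsilon|\,dt]<\varepsilon/2$; then for any measurable set $A$,
\[
\mathbb{\hat{E}}\Bigl[\int_0^T |\eta| I_A\,dt\Bigr]\leq \mathbb{\hat{E}}\Bigl[\int_0^T |\eta-\eta^\varepsilon|\,dt\Bigr]+\mathbb{\hat{E}}\Bigl[\int_0^T |\eta^\varepsilon| I_A\,dt\Bigr]\leq \frac{\varepsilon}{2}+\mathbb{\hat{E}}\Bigl[\int_0^T |\eta^\varepsilon| I_A\,dt\Bigr],
\]
using sub-additivity of $\mathbb{\hat{E}}$ and monotonicity. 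So it remains to control the last term for simple $\eta^\varepsilon$.

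Next I would handle the simple process. Each $\xi_j\in L_{ip}(\Omega_{t_j})$ is a bounded random variable, say $|\xi_j|\leq M$ for all $j$ (here $M$ may depend on $\varepsilon$, which is fine). Then $\int_0^T |\eta^\varepsilon| I_A\,dt\leq M\int_0^T I_A(t,\omega)\,dt$ pointwise, so by monotonicity and positive homogeneity $\mathbb{\hat{E}}[\int_0^T |\eta^\varepsilon| I_A\,dt]\leq M\,\mathbb{\hat{E}}[\int_0^T I_A(t,\omega)\,dt]$. Now choose $\delta=\varepsilon/(2M)$; then whenever $\mathbb{\hat{E}}[\int_0^T I_A\,dt]<\delta$ we get $\mathbb{\hat{E}}[\int_0^T |\eta^\varepsilon| I_A\,dt]<\varepsilon/2$, and combining with the previous display gives $\mathbb{\hat{E}}[\int_0^T |\eta| I_A\,dt]<\varepsilon$, as required. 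One must double check that $I_A\cdot\eta^\varepsilon$ (resp.\ $I_A$) makes sense inside $\mathbb{\hat{E}}$; since $A\in\mathcal{B}([0,T])\times\mathcal{F}_T$ these are nonnegative measurable functions on $[0,T]\times\Omega$ and the quantity $\mathbb{\hat{E}}[\int_0^T(\cdot)\,dt]$ is well defined via the standard extension of $\mathbb{\hat{E}}$, or equivalently $\mathbb{\hat{E}}[\cdot]=\sup_{P\in\mathcal{P}}E_P[\cdot]$ by Theorem~\ref{the2.7} together with Fubini under each $P$.

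The main obstacle, and the only genuinely delicate point, is the first reduction: one needs that bounded simple processes are dense in $M_G^1(0,T)$ in the norm $\|\eta\|_{M_G^1}=\mathbb{\hat{E}}[\int_0^T|\eta(s)|\,ds]$. Density of $M_G^0(0,T)$ is the definition of $M_G^1(0,T)$, and the elements $\xi_j\in L_{ip}(\Omega_{t_j})$ are automatically bounded (they are bounded Lipschitz functions of finitely many increments of $B$), so in fact every element of $M_G^0(0,T)$ is already a bounded simple process — no extra truncation is needed. Thus the argument is essentially: approximate by a simple process, bound the simple process by a constant times $I_A$, and invoke positive homogeneity of $\mathbb{\hat{E}}$; the $\delta$ is read off from the (finite) sup-norm bound on the approximating simple process. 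I would also remark that an alternative route is to write $\mathbb{\hat{E}}[\int_0^T|\eta|I_A\,dt]=\sup_{P\in\mathcal{P}}E_P[\int_0^T|\eta|I_A\,dt]$ and apply the classical De la Vallée-Poussin / uniform-integrability argument uniformly over the weakly compact family $\mathcal{P}$, but the density-plus-homogeneity proof above is shorter and self-contained within the tools already introduced.
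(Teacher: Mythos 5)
Your proof is correct, but it takes a different route from the paper's. The paper truncates $\eta$ itself: it invokes Proposition 18 of \cite{DHP11}, which gives $\lim_{N\rightarrow\infty}\mathbb{\hat{E}}[\int_{0}^{T}|\eta| I_{\{|\eta|\geq N\}}dt]=0$ for $\eta\in M_{G}^{1}(0,T)$, picks $N_{0}$ with tail mass below $\varepsilon/2$, sets $\delta=\varepsilon/(2N_{0})$, and splits $|\eta|I_{A}$ over $\{|\eta|\geq N_{0}\}\cap A$ and $\{|\eta|<N_{0}\}\cap A$. You instead exploit that $M_{G}^{1}(0,T)$ is by definition the completion of $M_{G}^{0}(0,T)$ under $\Vert\eta\Vert_{M_{G}^{1}}=\mathbb{\hat{E}}[\int_{0}^{T}|\eta|dt]$, approximate $\eta$ by a simple process $\eta^{\varepsilon}$ (automatically bounded, since its coefficients lie in $L_{ip}$), and split $|\eta|I_{A}\leq|\eta-\eta^{\varepsilon}|+|\eta^{\varepsilon}|I_{A}$, reading off $\delta=\varepsilon/(2M)$ from the sup-norm bound $M$ of $\eta^{\varepsilon}$. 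The overall architecture is the same (small part plus bounded part, $\delta$ equal to $\varepsilon$ over twice the bound), but your decomposition replaces the cited uniform-integrability result by the density that defines $M_{G}^{1}$, so your argument is self-contained and in effect re-proves the relevant special case of that result, at the modest cost of an extra triangle-inequality step; the paper's version is shorter because it outsources the key limit to \cite{DHP11}. Your remark on interpreting $\mathbb{\hat{E}}[\int_{0}^{T}\cdot\,dt]$ for the general measurable set $A$ via the representation $\sup_{P\in\mathcal{P}}E_{P}[\cdot]$ is a point the paper leaves implicit, and your use of sub-additivity, monotonicity and positive homogeneity for constants is legitimate there.
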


\begin{proof}
Since $\eta\in M_{G}^{1}(0,T)$, we have%
\[
\underset{N\rightarrow\infty}{\lim}\mathbb{\hat{E}}[\int_{0}^{T}\mid\eta\mid
I_{\{ \mid\eta\mid\geq N\}}dt]=0
\]
by Proposition 18 in \cite{DHP11}. Then for any $\varepsilon>0$, there exists
a $N_{0}$ such that $\mathbb{\hat{E}}[\int_{0}^{T}\mid\eta\mid I_{\{ \mid
\eta\mid\geq N_{0}\}}dt]<\frac{\varepsilon}{2}$. Take $\delta=\frac
{\varepsilon}{2N_{0}}$. For any $A\in\mathcal{B}([0,T])\times\mathcal{F}_{T}$
with $\mathbb{\hat{E}}[\int_{0}^{T}I_{A}(t,\omega)dt]<\delta$, we have that%
\[%
\begin{array}
[c]{cl}%
\mathbb{\hat{E}}[\int_{0}^{T}\mid\eta\mid I_{A}dt] & =\mathbb{\hat{E}}%
[\int_{0}^{T}\mid\eta\mid(I_{\{ \mid\eta\mid\geq N_{0}\} \cap A}+I_{\{
\mid\eta\mid<N_{0}\} \cap A})dt]\\
& \leq\mathbb{\hat{E}}[\int_{0}^{T}\mid\eta\mid I_{\{ \mid\eta\mid\geq N_{0}\}
\cap A}dt]+\mathbb{\hat{E}}[\int_{0}^{T}\mid\eta\mid I_{\{ \mid\eta\mid
<N_{0}\} \cap A}dt]\\
& \leq\mathbb{\hat{E}}[\int_{0}^{T}\mid\eta\mid I_{\{ \mid\eta\mid\geq
N_{0}\}}dt]+\mathbb{\hat{E}}[\int_{0}^{T}N_{0}I_{\{ \mid\eta\mid<N_{0}\} \cap
A}dt]\\
& \leq\varepsilon.
\end{array}
\]
This completes the proof.
\end{proof}

\textbf{Proof of Proposition \ref{variational derivative-forward}. (i)} From
(\ref{state-1}) and (\ref{variational-eq-1}), we have%
\[
\left\{
\begin{array}
[c]{rl}%
d\tilde{x}_{\rho}(t)= & \rho^{-1}[b_{\rho}(t)-b(t)-\rho(b_{x}(t)\hat
{x}(t)+b_{u}(t)(u(t)-\bar{u}(t)))]dt\\
& +\rho^{-1}[\sigma_{\rho}^{i}(t)-\sigma^{i}(t)-\rho(\sigma_{x}^{i}(t)\hat
{x}(t)+\sigma_{u}^{i}(t)(u(t)-\bar{u}(t)))]dB^{i}(t),\\
\tilde{x}_{\rho}(0)= & 0.
\end{array}
\right.
\]
where $b_{\rho}(t)=b(t,x_{\rho}(t),\bar{u}(t)+\rho(u(t)-\bar{u}(t)))$,
$b(t)=b(t,\bar{x}(t),\bar{u}(t)),$ $\sigma_{\rho}^{i}(t)=\sigma^{i}(t,x_{\rho
}(t),\bar{u}(t)+\rho(u(t)-\bar{u}(t)))$and $\sigma^{i}(t)=\sigma^{i}(t,\bar
{x}(t),\bar{u}(t))$. Let%
\begin{align*}
A_{\rho}(t)  &  =\int_{0}^{1}b_{x}(t,\bar{x}(t)+\lambda\rho(\hat{x}%
(t)+\tilde{x}_{\rho}(t)),\bar{u}(t)+\lambda\rho(u(t)-\bar{u}(t)))d\lambda,\\
B_{\rho}^{i}(t)  &  =\int_{0}^{1}\sigma_{x}^{i}(t,\bar{x}(t)+\lambda\rho
(\hat{x}(t)+\tilde{x}_{\rho}(t)),\bar{u}(t)+\lambda\rho(u(t)-\bar
{u}(t)))d\lambda,\\
C_{\rho}(t)  &  =[A_{\rho}(t)-b_{x}(t)]\hat{x}(t)+\int_{0}^{1}[b_{u}(t,\bar
{x}(t)+\lambda\rho(\hat{x}(t)+\tilde{x}_{\rho}(t)),\bar{u}(t)+\lambda
\rho(u(t)-\bar{u}(t)))-b_{u}(t)](u(t)-\bar{u}(t))d\lambda,\\
D_{\rho}^{i}(t)  &  =[B_{\rho}(t)-\sigma_{x}(t)]\hat{x}(t)+\int_{0}^{1}%
[\sigma_{u}^{i}(t,\bar{x}(t)+\lambda\rho(\hat{x}(t)+\tilde{x}_{\rho}%
(t)),\bar{u}(t)+\lambda\rho(u(t)-\bar{u}(t)))-\sigma_{u}^{i}(t)](u(t)-\bar
{u}(t))d\lambda.
\end{align*}
Thus,%
\[
\left\{
\begin{array}
[c]{rl}%
d\tilde{x}_{\rho}(t)= & [A_{\rho}(t)\tilde{x}_{\rho}(t)+C_{\rho}%
(t)]dt+[B_{\rho}^{i}(t)\tilde{x}_{\rho}(t)+D_{\rho}^{i}(t)]dB^{i}(t),\\
\tilde{x}_{\rho}(0)= & 0.
\end{array}
\right.
\]
Using It\^{o}'s formula to $\mid\tilde{x}_{\rho}(t)\mid^{2}$, we get%
\begin{align*}
\mathbb{\hat{E}}[  &  \mid\tilde{x}_{\rho}(t)\mid^{2}]\\
&  =\mathbb{\hat{E}}[\int_{0}^{t}2\langle\tilde{x}_{\rho}(s),A_{\rho}%
(s)\tilde{x}_{\rho}(s)+C_{\rho}(s)\rangle ds+\int_{0}^{t}\langle B_{\rho}%
^{i}(s)\tilde{x}_{\rho}(s)+D_{\rho}^{i}(s),B_{\rho}^{j}(s)\tilde{x}_{\rho
}(s)+D_{\rho}^{j}(s)\rangle d\langle B^{i},B^{j}\rangle(s)]\\
&  \leq C(\mathbb{\hat{E}[}\int_{0}^{t}\mid\tilde{x}_{\rho}(s)\mid
^{2}ds]+I_{\rho}),
\end{align*}
where $C$ is a constant and
\[
I_{\rho}=\mathbb{\hat{E}[}\int_{0}^{T}(\mid C_{\rho}(s)\mid^{2}+\mid D_{\rho
}^{i}(s)\mid^{2})ds].
\]
Applying Gronwall's inequality, we obtain that%
\begin{equation}
\mathbb{\hat{E}}[\mid\tilde{x}_{\rho}(t)\mid^{2}]\leq Ce^{Ct}I_{\rho}\leq
Ce^{CT}I_{\rho}. \label{gronwall ineq}%
\end{equation}
Note that $C_{\rho}(t)$ and $D_{\rho}(t)$ are bounded by $C^{\prime}(\mid
\hat{x}(t)\mid+\mid u(t)-\bar{u}(t)\mid)$, where $C^{\prime}$ is a constant
which is independent with $\rho$. Thus, $\mathbb{\hat{E}}[\mid\tilde{x}_{\rho
}(t)\mid^{2}]$ is bounded by some constant $C$ for $0\leq\rho\leq1$.

\textbf{(ii)} By (\ref{gronwall ineq}), we only need to prove that $I_{\rho
}\rightarrow0$ as $\rho\rightarrow0$. We first prove
\[
\underset{\rho\rightarrow0}{\lim}\mathbb{\hat{E}}[\int_{0}^{T}\mid C_{\rho
}(s)\mid^{2}ds]=0.
\]
Define%
\begin{align*}
E_{\rho}(t)  &  =b_{x}(t,\bar{x}(t)+\lambda\rho(\hat{x}(t)+\tilde{x}_{\rho
}(t)),\bar{u}(t)+\lambda\rho(u(t)-\bar{u}(t)))-b_{x}(t),\\
F_{\rho}(t)  &  =b_{u}(t,\bar{x}(t)+\lambda\rho(\hat{x}(t)+\tilde{x}_{\rho
}(t)),\bar{u}(t)+\lambda\rho(u(t)-\bar{u}(t)))-b_{u}(t).
\end{align*}
For $N>0$, set%
\begin{align*}
S_{1,N}  &  =\{ \mid\hat{x}(t)+\tilde{x}_{\rho}(t)\mid\leq N\},\\
S_{2,N}  &  =\{ \mid u(t)-\bar{u}(t)\mid\leq N\}.
\end{align*}
We have%
\begin{equation}%
\begin{array}
[c]{rl}%
\mid C_{\rho}(t)\mid^{2}= & \mid\int_{0}^{1}E_{\rho}(t)d\lambda\hat{x}%
(t)+\int_{0}^{1}F_{\rho}(t)d\lambda(u(t)-\bar{u}(t))\mid^{2}\\
\leq & 2(\int_{0}^{1}\mid E_{\rho}(t)\mid^{2}d\lambda\mid\hat{x}(t)\mid
^{2}+\int_{0}^{1}\mid F_{\rho}(t)\mid^{2}d\lambda\mid u(t)-\bar{u}(t)\mid
^{2})\\
\leq & 2(\int_{0}^{1}\mid E_{\rho}(t)\mid^{2}(I_{S_{1,N}\cap S_{2,N}%
}+I_{S_{1,N}^{c}}+I_{S_{2,N}^{c}})d\lambda\mid\hat{x}(t)\mid^{2}\\
& +\int_{0}^{1}\mid F_{\rho}(t)\mid^{2}(I_{S_{1,N}\cap S_{2,N}}+I_{S_{1,N}%
^{c}}+I_{S_{2,N}^{c}})d\lambda\mid u(t)-\bar{u}(t)\mid^{2})\\
\leq & 2\bar{\omega}(2N\rho)\mid\hat{x}(t)\mid^{2}+C(I_{S_{1,N}^{c}%
}+I_{S_{2,N}^{c}})\mid\hat{x}(t)\mid^{2}\\
& +2\bar{\omega}(2N\rho)\mid u(t)-\bar{u}(t)\mid^{2}+C(I_{S_{1,N}^{c}%
}+I_{S_{2,N}^{c}})\mid u(t)-\bar{u}(t)\mid^{2}.
\end{array}
\label{estimate C}%
\end{equation}
By Lemma \ref{uniform property}, for each $\varepsilon>0$, there exists a
$\delta>0$ such that for any $A\in\mathcal{B}([0,T])\times\mathcal{F}_{T}$
with $\mathbb{\hat{E}}[\int_{0}^{T}I_{A}(t,\omega)dt]<\delta$, we have that%
\begin{align*}
\mathbb{\hat{E}}[\int_{0}^{T}  &  \mid\hat{x}(t)\mid^{2}I_{A}dt]<\varepsilon
,\\
\mathbb{\hat{E}}[\int_{0}^{T}  &  \mid u(t)-\bar{u}(t)\mid^{2}I_{A}%
dt]<\varepsilon.
\end{align*}
Note that%
\[
\mathbb{\hat{E}}[\int_{0}^{T}(I_{S_{1,N}^{c}}+I_{S_{2,N}^{c}})dt]\leq\frac
{1}{N^{2}}\mathbb{\hat{E}}[\int_{0}^{T}(\mid\hat{x}(t)+\tilde{x}_{\rho}%
(t)\mid^{2}+\mid u(t)-\bar{u}(t)\mid^{2})dt],
\]
then we can choose an $N>0$ such that $\mathbb{\hat{E}}[\int_{0}%
^{T}(I_{S_{1,N}^{c}}+I_{S_{2,N}^{c}})dt]<\delta$, which implies that%
\[
\mathbb{\hat{E}}[\int_{0}^{T}(I_{S_{1,N}^{c}}+I_{S_{2,N}^{c}})(\mid\hat
{x}(t)\mid^{2}+\mid u(t)-\bar{u}(t)\mid^{2})dt]\leq C\varepsilon.
\]
Thus by (\ref{estimate C}), it is easy to obtain $\underset{\rho
\rightarrow0}{\lim}\mathbb{\hat{E}}[\int_{0}^{T}\mid C_{\rho}(s)\mid^{2}%
ds]=0$. Similarly, we can prove that $\underset{\rho\rightarrow0}{\lim
}\mathbb{\hat{E}}[\int_{0}^{T}\mid D_{\rho}(s)\mid^{2}ds]=0$. Thus we get
$\underset{\rho\rightarrow0}{\lim}\underset{0\leq t\leq T}{\sup}%
\mathbb{\hat{E}}[\mid\tilde{x}_{\rho}(t)\mid^{2}]=0$. $\Box$

Now let
\begin{align*}
f_{\rho}(t)  &  =f(t,x_{\rho}(t),y_{\rho}(t),\bar{u}(t)+\rho(u(t)-\bar
{u}(t)))\text{, }f(t)=f(t,\bar{x}(t),\bar{y}(t),\bar{u}(t)),\\
f_{x}(t)  &  =f_{x}(t,\bar{x}(t),\bar{y}(t),\bar{u}(t)),\text{ }f_{y}%
(t)=f_{y}(t,\bar{x}(t),\bar{y}(t),\bar{u}(t)),\text{ }f_{u}(t)=f_{u}(t,\bar
{x}(t),\bar{y}(t),\bar{u}(t)).
\end{align*}

Set%
\[
\mathcal{P}^{\ast}=\{P\in\mathcal{P}\mid E_{P}[\bar{K}(T)]=0\}
\]
and%
\[
\Theta^{u}=\phi_{x}(\bar{x}(T))\hat{x}(T)m(T)+\int_{0}^{T}[f_{x}(s)\hat
{x}(s)+f_{u}(s)(u(s)-\bar{u}(s))]m(s)ds,
\]
where%
\[
m(t)=\exp\{ \int_{0}^{t}f_{y}(s)ds\}.
\]

\begin{theorem}
\label{variational derivative-y}Suppose (H1)-(H3) hold. Then, for any
$u(\cdot)\in\mathcal{U}[0,T]$, there exists a $P^{u}\in\mathcal{P}^{\ast}$
such that%
\begin{equation}
\underset{\rho\rightarrow0}{\lim}\frac{y_{\rho}(0)-\bar{y}(0)}{\rho}=E_{P^{u}%
}\mathbb{[}\Theta^{u}]=\sup_{P\in\mathcal{P}^{\ast}}E_{P}[\Theta^{u}].
\label{new-varequ}%
\end{equation}

\end{theorem}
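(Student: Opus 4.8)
The plan is to rescale, to write a linear backward equation for the rescaled difference, to remove its zeroth‑order term with an integrating factor, and then — since the nondecreasing part $K$ cannot be differentiated — to keep the resulting \emph{pathwise} identity and evaluate it under suitably chosen $P\in\mathcal{P}$. Concretely, set $p_{\rho}=\rho^{-1}(y_{\rho}-\bar y)$, $q_{\rho}=\rho^{-1}(z_{\rho}-\bar z)$ and $L_{\rho}=\rho^{-1}(K_{\rho}-\bar K)$. Expanding $f(t,x_{\rho}(t),y_{\rho}(t),\bar u(t)+\rho(u(t)-\bar u(t)))-f(t,\bar x(t),\bar y(t),\bar u(t))$ and $\phi(x_{\rho}(T))-\phi(\bar x(T))$ by the mean value theorem yields bounded processes $\alpha_{\rho},\beta_{\rho},\gamma_{\rho}$ (the $\lambda$‑averaged $f_{x},f_{y},f_{u}$ along the interpolating segment) with
\[
-dp_{\rho}(t)=\big[\alpha_{\rho}(t)(\hat x(t)+\tilde x_{\rho}(t))+\beta_{\rho}(t)p_{\rho}(t)+\gamma_{\rho}(t)(u(t)-\bar u(t))\big]dt-q_{\rho}(t)dB(t)-dL_{\rho}(t),\quad p_{\rho}(T)=\phi_{x}(\bar x(T))\hat x(T)+R_{\rho}.
\]
By the stability estimate for $G$‑BSDEs (\cite{HJPS1}) together with Proposition \ref{variational derivative-forward}, $\|y_{\rho}-\bar y\|_{S_{G}^{2}}+\|z_{\rho}-\bar z\|_{M_{G}^{2}}+\|K_{\rho}(T)-\bar K(T)\|_{L_{G}^{2}}\le C\rho$, and, by the argument used for Proposition \ref{variational derivative-forward} (Lemma \ref{uniform property} and (H3)), $\alpha_{\rho}\to f_{x}$, $\beta_{\rho}\to f_{y}$, $\gamma_{\rho}\to f_{u}$ in $M_{G}^{2}$ and $R_{\rho}\to0$ in $L_{G}^{1}$, hence $m_{\rho}:=\exp\{\int_{0}^{\cdot}\beta_{\rho}(s)ds\}\to m$ in $S_{G}^{2}(0,T)$.

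Integrating by parts $m_{\rho}p_{\rho}$ cancels the $\beta_{\rho}p_{\rho}$ term; integrating over $[0,T]$ and using that $p_{\rho}(0)=\rho^{-1}(y_{\rho}(0)-\bar y(0))$ is a constant gives, for \emph{every} $P\in\mathcal{P}$,
\[
\rho^{-1}(y_{\rho}(0)-\bar y(0))=E_{P}[\Phi_{\rho}]-\rho^{-1}E_{P}\Big[\int_{0}^{T}m_{\rho}(t)dK_{\rho}(t)\Big]+\rho^{-1}E_{P}\Big[\int_{0}^{T}m_{\rho}(t)d\bar K(t)\Big],
\]
where $\Phi_{\rho}=m_{\rho}(T)p_{\rho}(T)+\int_{0}^{T}m_{\rho}(t)[\alpha_{\rho}(t)(\hat x(t)+\tilde x_{\rho}(t))+\gamma_{\rho}(t)(u(t)-\bar u(t))]dt\to\Theta^{u}$ in $L_{G}^{1}$; the stochastic integral contributes nothing since $\mathbb{\hat{E}}[\pm\int_{0}^{T}m_{\rho}q_{\rho}dB]=0$ together with $E_{P}\le\mathbb{\hat{E}}$ forces $E_{P}[\int_{0}^{T}m_{\rho}q_{\rho}dB]=0$ for all $P\in\mathcal{P}$.

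For the lower bound I would use that, $\bar K$ being a decreasing $G$‑martingale with $\bar K(0)=0$, one has $\bar K(T)\le0$ q.s. and $\mathbb{\hat{E}}[\bar K(T)]=\bar K(0)=0$, so $\mathcal{P}^{\ast}\ne\emptyset$ and, for $P\in\mathcal{P}^{\ast}$, $\bar K(T)=0$ $P$‑a.s., hence (monotonicity) $\bar K\equiv0$ and $\int_{0}^{T}m_{\rho}d\bar K=0$ $P$‑a.s., while $-\int_{0}^{T}m_{\rho}dK_{\rho}\ge0$ because $m_{\rho}>0$ and $K_{\rho}$ is decreasing. The identity then gives $\rho^{-1}(y_{\rho}(0)-\bar y(0))\ge E_{P}[\Phi_{\rho}]\to E_{P}[\Theta^{u}]$ for every $P\in\mathcal{P}^{\ast}$, whence $\liminf_{\rho\to0}\rho^{-1}(y_{\rho}(0)-\bar y(0))\ge\sup_{P\in\mathcal{P}^{\ast}}E_{P}[\Theta^{u}]$. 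For the upper bound, for each $\rho$ pick (weak compactness of $\mathcal{P}$ and Proposition \ref{npro-2.8}) a maximizer $P_{\rho}$ of $P\mapsto E_{P}[K_{\rho}(T)]$, so that $E_{P_{\rho}}[K_{\rho}(T)]=\mathbb{\hat{E}}[K_{\rho}(T)]=0$, i.e. $K_{\rho}\equiv0$ $P_{\rho}$‑a.s.; since $\int_{0}^{T}m_{\rho}d\bar K\le0$, the identity gives $\rho^{-1}(y_{\rho}(0)-\bar y(0))\le E_{P_{\rho}}[\Phi_{\rho}]$. Along a subsequence $\rho_{k}\to0$ realizing the $\limsup$ with $P_{\rho_{k}}$ converging weakly to some $P^{\ast}\in\mathcal{P}$, I would verify $P^{\ast}\in\mathcal{P}^{\ast}$ from $E_{P^{\ast}}[\bar K(T)]=\lim_{k}E_{P_{\rho_{k}}}[\bar K(T)-K_{\rho_{k}}(T)]=0$ (using $\|K_{\rho_{k}}(T)-\bar K(T)\|_{L_{G}^{1}}\to0$ and Proposition \ref{npro-2.8}), and $E_{P_{\rho_{k}}}[\Phi_{\rho_{k}}]\to E_{P^{\ast}}[\Theta^{u}]$ (since $\|\Phi_{\rho_{k}}-\Theta^{u}\|_{1,G}\to0$ and $P_{\rho_{k}}\rightharpoonup P^{\ast}$). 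Thus $\limsup_{\rho\to0}\rho^{-1}(y_{\rho}(0)-\bar y(0))\le E_{P^{\ast}}[\Theta^{u}]\le\sup_{P\in\mathcal{P}^{\ast}}E_{P}[\Theta^{u}]$.

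Combining the two bounds, the limit exists and equals $\sup_{P\in\mathcal{P}^{\ast}}E_{P}[\Theta^{u}]$; since $\mathcal{P}^{\ast}=\{P\in\mathcal{P}:E_{P}[\bar K(T)]=0\}$ is closed in $\mathcal{P}$ (continuity of $P\mapsto E_{P}[\bar K(T)]$, Proposition \ref{npro-2.8}), hence weakly compact, and $P\mapsto E_{P}[\Theta^{u}]$ is continuous ($\Theta^{u}\in L_{G}^{1}$), the supremum is attained at some $P^{u}\in\mathcal{P}^{\ast}$, which is \eqref{new-varequ}. The decisive point — and the main obstacle — is that one cannot let $\rho\to0$ in the rescaled backward equation to obtain a genuine "variational $G$‑BSDE", since $L_{\rho}=\rho^{-1}(K_{\rho}-\bar K)$ need not converge; the remedy is to retain the exact identity for the deterministic number $p_{\rho}(0)$ and to choose the reference measure ($P\in\mathcal{P}^{\ast}$ for the lower bound, $P_{\rho}$ optimal for $K_{\rho}$ for the upper bound) so that the $K$‑corrections vanish or keep a favorable sign, the one genuinely non‑routine ingredient being that the weak limit $P^{\ast}$ of the near‑optimal measures lands in $\mathcal{P}^{\ast}$, which relies on $\|K_{\rho}(T)-\bar K(T)\|_{L_{G}^{2}}\le C\rho$.
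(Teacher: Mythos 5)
Your proposal is correct in substance, but it takes a genuinely different route from the paper. The paper never passes to linear expectations of a pathwise identity: it applies It\^{o}'s formula to $m(t)(\bar K(t)+y_\rho(t)-\bar y(t))$ and evaluates $y_\rho(0)-\bar y(0)$ directly as a $G$-expectation (its identity (\ref{v-equation-y-1})), which implicitly uses that the linear $G$-BSDE solved by $\bar K+y_\rho-\bar y$ has the explicit sublinear representation, i.e.\ that $\int_0^\cdot m\,dK_\rho$ is again a decreasing $G$-martingale; it then proves existence of the limit by a monotonicity argument ($\rho\mapsto\mathbb{\hat E}[\rho^{-1}\int_0^T m\,d\bar K+\Theta^u]$ is nonincreasing and bounded below), chooses $P^{\rho,u}$ as a measure representing that $G$-expectation, and extracts a weak limit $P^u$, showing $E_{P^u}[\bar K(T)]=0$ from the representation identity. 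You instead keep the exact pathwise identity for the deterministic number $p_\rho(0)$, kill the stochastic integral under each $P\in\mathcal P$ by symmetry of the It\^{o} integral, and obtain the two bounds by two different choices of measures: any $P\in\mathcal P^{\ast}$ (under which $\bar K\equiv0$) for the lower bound, and a maximizer $P_\rho$ of $E_P[K_\rho(T)]$ (under which $K_\rho\equiv0$) for the upper bound, then pass to a weak limit and check $P^{\ast}\in\mathcal P^{\ast}$ via $\|K_\rho(T)-\bar K(T)\|_{L^1_G}\to0$. What your route buys is that you avoid both the monotonicity trick and the nontrivial fact that $\int m\,dK_\rho$ remains a decreasing $G$-martingale (only linearity of each $E_P$ and $\mathbb{\hat E}[\pm\int m_\rho q_\rho dB]=0$ are used); what it costs is that you must control the convergence $\Phi_\rho\to\Theta^u$ with the $\rho$-dependent integrating factor $m_\rho$ and, crucially, the convergence $K_\rho(T)\to\bar K(T)$ in $L^1_G$, which the paper's choice of $P^{\rho,u}$ sidesteps (it only needs $E_{P^{\rho_n,u}}[\int_0^T m\,d\bar K]\to0$, read off from its representation identity). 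Your selection of $P_\rho$ so that $K_\rho$ vanishes $P_\rho$-a.s.\ is the genuinely non-routine new ingredient and it is sound.

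One caveat: the rates you assert are too strong. The standard $G$-BSDE a priori estimates (and the paper itself, in Section 5.1) give $\mathbb{\hat E}[\int_0^T|z_\rho-\bar z|^2ds]\le C\rho$, i.e.\ $\|z_\rho-\bar z\|_{M^2_G}=O(\sqrt\rho)$, and consequently $\|K_\rho(T)-\bar K(T)\|_{L^2_G}=O(\sqrt\rho)$, not $O(\rho)$ as you claim; only $\sup_t\mathbb{\hat E}[|y_\rho(t)-\bar y(t)|^2]\le C\rho^2$ is of first order. This does not damage your argument, since the only place the $K$-estimate enters (the verification that the weak limit $P^{\ast}$ lies in $\mathcal P^{\ast}$) requires merely $\|K_\rho(T)-\bar K(T)\|_{L^1_G}\to0$, which the $O(\sqrt\rho)$ bound supplies; but the last sentence of your proposal should not attribute the step to a linear rate that is not available.
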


\begin{remark}
If $B$ is the classical Brownian motion, then $E_{P^{u}}\mathbb{[}\Theta^{u}]$
is the solution of the variational equation for BSDE at time $0$.
\end{remark}

In order to prove this theorem, we need the following lemma.

\begin{lemma}
\label{variational derivative-backward} Assume (H1)-(H3) hold. Then we have

\begin{description}
\item[(i)] $\mathbb{\hat{E}}[\mid J_{1}m(T)\mid]=o(\rho),$

\item[(ii)] $\mathbb{\hat{E}}[\mid\int_{0}^{T}f_{x}(s)\tilde{x}_{\rho
}(s)m(s)ds\mid]=o(1),$

\item[(iii)] $\mathbb{\hat{E}}[\mid\int_{0}^{T}J_{2}(s)m(s)ds\mid]=o(\rho),$
\end{description}

where
\begin{align*}
J_{1}  &  =\phi(x_{\rho}(T))-\phi(\bar{x}(T))-\phi_{x}(\bar{x}(T))\rho\hat
{x}(T),\\
J_{2}(s)  &  =(f_{\rho}(s)-f(s))-[f_{x}(s)(x_{\rho}(s)-\bar{x}(s))+f_{y}%
(s)(y_{\rho}(s)-\bar{y}(s))+f_{u}(s)\rho(u(s)-\bar{u}(s))].
\end{align*}

\end{lemma}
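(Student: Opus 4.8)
The plan is to reduce all three estimates to facts already at hand: Proposition \ref{variational derivative-forward}, the standard $L^{2}$-stability estimates for $G$-SDEs and $G$-BSDEs (\cite{P10,HJPS1}), boundedness of the derivatives from (H2), and the truncation device already used in the proof of Proposition \ref{variational derivative-forward}(ii). First I would record three preliminaries. (a) Since $f_{y}$ is bounded, $m(\cdot)$ and $1/m(\cdot)$ are bounded, so every factor $m(T)$ or $m(s)$ is harmless up to a multiplicative constant. (b) Since $x_{\rho}(t)-\bar{x}(t)=\rho(\hat{x}(t)+\tilde{x}_{\rho}(t))$ with $\sup_{0\le\rho\le1}\sup_{0\le t\le T}\mathbb{\hat{E}}[|\tilde{x}_{\rho}(t)|^{2}]<\infty$ (Proposition \ref{variational derivative-forward}(i)) and the analogous bound for $\hat{x}$, subadditivity gives $\mathbb{\hat{E}}[|x_{\rho}(t)-\bar{x}(t)|^{2}]\le C\rho^{2}$; moreover, applying the $L^{2}$-estimate for the $G$-BSDE (\ref{state-2}) to $y_{\rho}-\bar{y}$ together with the Lipschitz property of $\phi,f$ and the previous bound, $q_{\rho}(\cdot):=\rho^{-1}(y_{\rho}(\cdot)-\bar{y}(\cdot))$ satisfies $\sup_{0\le\rho\le1}\mathbb{\hat{E}}[\sup_{0\le t\le T}|q_{\rho}(t)|^{2}]\le C$. (c) Replacing $\bar{\omega}(r)$ by $\min(\bar{\omega}(r),M)$ for a constant $M$ dominating all the relevant derivatives (legitimate by (H2)), I may assume $\bar{\omega}$ is bounded.

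Statement (ii) is then immediate: by boundedness of $f_{x}$ and $m$, the Cauchy--Schwarz inequality for $\mathbb{\hat{E}}$, and the inequality $\mathbb{\hat{E}}[\int_{0}^{T}X_{s}\,ds]\le\int_{0}^{T}\mathbb{\hat{E}}[X_{s}]\,ds$ valid for nonnegative $X\in M_{G}^{1}(0,T)$,
\[
\mathbb{\hat{E}}\Big[\Big|\int_{0}^{T}f_{x}(s)\tilde{x}_{\rho}(s)m(s)\,ds\Big|\Big]\le C\mathbb{\hat{E}}\Big[\int_{0}^{T}|\tilde{x}_{\rho}(s)|\,ds\Big]\le C\Big(\int_{0}^{T}\mathbb{\hat{E}}[|\tilde{x}_{\rho}(s)|^{2}]\,ds\Big)^{1/2}\le C\sqrt{T}\Big(\sup_{0\le s\le T}\mathbb{\hat{E}}[|\tilde{x}_{\rho}(s)|^{2}]\Big)^{1/2},
\]
which tends to $0$ by Proposition \ref{variational derivative-forward}(ii).

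For (iii), since $f$ does not depend on $z$ here, I would Taylor-expand $f_{\rho}(s)-f(s)=\int_{0}^{1}[f_{x}(\theta_{\lambda}(s))(x_{\rho}(s)-\bar{x}(s))+f_{y}(\theta_{\lambda}(s))(y_{\rho}(s)-\bar{y}(s))+f_{u}(\theta_{\lambda}(s))\rho(u(s)-\bar{u}(s))]\,d\lambda$, where $\theta_{\lambda}(s)$ interpolates linearly between $(s,\bar{x}(s),\bar{y}(s),\bar{u}(s))$ and the $\rho$-perturbed argument. Subtracting the linear part and invoking (H3),
\[
|J_{2}(s)|\le\bar{\omega}(\Lambda_{\rho}(s))\,\Lambda_{\rho}(s),\qquad\Lambda_{\rho}(s):=|x_{\rho}(s)-\bar{x}(s)|+|y_{\rho}(s)-\bar{y}(s)|+\rho|u(s)-\bar{u}(s)|=\rho R_{\rho}(s),
\]
with $R_{\rho}(s)=|\hat{x}(s)+\tilde{x}_{\rho}(s)|+|q_{\rho}(s)|+|u(s)-\bar{u}(s)|$, which satisfies $\sup_{0\le\rho\le1}\mathbb{\hat{E}}[\int_{0}^{T}R_{\rho}(s)^{2}\,ds]\le C$ by preliminary (b) and $u-\bar{u}\in M_{G}^{2}(0,T)$. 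Hence $\rho^{-1}|J_{2}(s)|\le\bar{\omega}(\rho R_{\rho}(s))R_{\rho}(s)$, and it remains to show that $\mathbb{\hat{E}}[\int_{0}^{T}\bar{\omega}(\rho R_{\rho}(s))R_{\rho}(s)\,ds]\to0$. Splitting over $S_{\rho,N}:=\{(s,\omega):R_{\rho}(s,\omega)\le N\}$ and its complement: on $S_{\rho,N}$ we have $\bar{\omega}(\rho R_{\rho}(s))\le\bar{\omega}(\rho N)$, giving a contribution $\le\bar{\omega}(\rho N)\,N\,T\to0$ for each fixed $N$; on the complement we use $\bar{\omega}\le M$, Cauchy--Schwarz, and $\mathbb{\hat{E}}[\int_{0}^{T}I_{S_{\rho,N}^{c}}\,ds]\le N^{-2}\mathbb{\hat{E}}[\int_{0}^{T}R_{\rho}(s)^{2}\,ds]\le CN^{-2}$ to get a contribution $\le CM/N$, uniformly in $\rho$. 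Choosing $N$ large and then $\rho$ small proves (iii); this is exactly the structure of the proof of Proposition \ref{variational derivative-forward}(ii), and Lemma \ref{uniform property} may be substituted for the Chebyshev step. Statement (i) is the same device at the single time $T$: from $\phi(x_{\rho}(T))-\phi(\bar{x}(T))=\int_{0}^{1}\phi_{x}(\bar{x}(T)+\lambda(x_{\rho}(T)-\bar{x}(T)))\,d\lambda\cdot\rho(\hat{x}(T)+\tilde{x}_{\rho}(T))$ one obtains $\rho^{-1}|J_{1}|\le\bar{\omega}(\rho|\hat{x}(T)+\tilde{x}_{\rho}(T)|)\,|\hat{x}(T)+\tilde{x}_{\rho}(T)|+C|\tilde{x}_{\rho}(T)|$, whose $\mathbb{\hat{E}}$ tends to $0$ by the truncation argument above and Proposition \ref{variational derivative-forward}(ii), and multiplying by the bounded $m(T)$ finishes (i).

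I expect (iii) to be the main obstacle: one must marry the modulus-of-continuity bound with the uniform-in-$\rho$ $L^{2}$-bounds, and the delicate point is that both the forward error $\tilde{x}_{\rho}$ and the backward ratio $q_{\rho}$ appear inside the argument of $\bar{\omega}$, so a $\rho$-uniform integrability control (the truncation, or Lemma \ref{uniform property}) is indispensable. A secondary point requiring care is establishing $\sup_{\rho}\mathbb{\hat{E}}[\sup_{t}|q_{\rho}(t)|^{2}]\le C$ from the $G$-BSDE stability estimate, which uses the Lipschitz bounds of (H2) and a Gronwall-type argument to absorb the $y$-dependence of $f$.
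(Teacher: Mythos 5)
Your proof is correct and follows essentially the same route as the paper: Taylor expansion with integral remainder, the modulus-of-continuity hypothesis (H3), the $G$-BSDE a priori estimate yielding $\sup_{0\le t\le T}\mathbb{\hat{E}}[|y_{\rho}(t)-\bar{y}(t)|^{2}]\le C\rho^{2}$, and a truncation/Chebyshev argument combined with Proposition \ref{variational derivative-forward}; the only stylistic difference is that the paper handles the bad sets with an $\alpha$-H\"{o}lder interpolation ($\alpha\in(0,1)$) where you simply cap $\bar{\omega}$ by a constant, which is an equally valid and slightly cleaner device. One minor remark: the claim $\sup_{\rho}\mathbb{\hat{E}}[\sup_{t}|q_{\rho}(t)|^{2}]\le C$ (supremum inside the expectation) is stronger than what the cited stability estimate directly gives and more than you need, since the weaker bound $\sup_{t}\mathbb{\hat{E}}[|q_{\rho}(t)|^{2}]\le C$, which is exactly what the paper derives from Proposition 2.15 of \cite{HJPS}, already implies $\mathbb{\hat{E}}[\int_{0}^{T}R_{\rho}(s)^{2}ds]\le C$ and hence suffices for your argument.
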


\begin{proof}
\textbf{(i)}%
\[%
\begin{array}
[c]{rl}%
J_{1}= & \int\nolimits_{0}^{1}\phi_{x}(\bar{x}(T)+\lambda(x_{\rho}(T)-\bar
{x}(T))d\lambda(x_{\rho}(T)-\bar{x}(T))-\phi_{x}(\bar{x}(T))\rho\hat{x}(T)\\
= & \int\nolimits_{0}^{1}\phi_{x}(\bar{x}(T)+\lambda\rho(\tilde{x}_{\rho
}(T)+\hat{x}(T))d\lambda\rho(\tilde{x}_{\rho}(T)+\hat{x}(T))-\phi_{x}(\bar
{x}(T))\rho\hat{x}(T)\\
= & \rho\int\nolimits_{0}^{1}(\phi_{x}(\bar{x}(T)+\lambda\rho(\tilde{x}_{\rho
}(T)+\hat{x}(T))-\phi_{x}(\bar{x}(T)))d\lambda\hat{x}(T)\\
& +\rho\int\nolimits_{0}^{1}\phi_{x}(\bar{x}(T)+\lambda\rho(\tilde{x}_{\rho
}(T)+\hat{x}(T))d\lambda\tilde{x}_{\rho}(T).
\end{array}
\]
Using the similar analysis as in Proposition
\ref{variational derivative-forward}, we can prove that
\[
\underset{\rho\rightarrow0}{\lim}\mathbb{\hat{E}}[\mid\int\nolimits_{0}%
^{1}(\phi_{x}(\bar{x}(T)+\lambda\rho(\tilde{x}_{\rho}(T)+\hat{x}(T))-\phi
_{x}(\bar{x}(T)))d\lambda\mid\mid\hat{x}(T)m(T)\mid]=0.
\]
It is easy to see
\[
\mathbb{\hat{E}}[\mid\int\nolimits_{0}^{1}\phi_{x}(\bar{x}(T)+\lambda
\rho(\tilde{x}_{\rho}(T)+\hat{x}(T))d\lambda\tilde{x}_{\rho}(T)m(T)\mid]\leq
C(\mathbb{\hat{E}}[\mid\tilde{x}_{\rho}(T)\mid^{2}])^{\frac{1}{2}%
}(\mathbb{\hat{E}}[\mid m(T)\mid^{2}])^{\frac{1}{2}}.
\]
Then, by Proposition \ref{variational derivative-forward},
\[
\underset{\rho\rightarrow0}{\lim}\mathbb{\hat{E}}[\mid\int\nolimits_{0}%
^{1}\phi_{x}(\bar{x}(T)+\lambda\rho(\tilde{x}_{\rho}(T)+\hat{x}(T))d\lambda
\tilde{x}_{\rho}(T)m(T)\mid]=0.
\]

\textbf{(ii)}%
\[%
\begin{array}
[c]{cl}%
\mathbb{\hat{E}}[\mid\int_{0}^{T}f_{x}(s)\tilde{x}_{\rho}(s)m(s)ds\mid] & \leq
C\int_{0}^{T}\mathbb{\hat{E}}[\mid\tilde{x}_{\rho}(s)\mid\mid m(s)\mid]ds\\
& \leq C\int_{0}^{T}(\mathbb{\hat{E}}[\mid\tilde{x}_{\rho}(s)\mid^{2}%
])^{\frac{1}{2}}ds\\
& \leq CT(\underset{0\leq s\leq T}{\sup}\mathbb{\hat{E}}[\mid\tilde{x}_{\rho
}(s)\mid^{2}])^{\frac{1}{2}}.
\end{array}
\]
By Proposition \ref{variational derivative-forward}, $\mathbb{\hat{E}}%
[\mid\int_{0}^{T}f_{x}(s)\tilde{x}_{\rho}(s)m(s)ds\mid]\rightarrow0$ as
$\rho\rightarrow0$.

\textbf{(iii)} Set%
\[
\tilde{f}_{l}(s)=f_{l}(s,\bar{x}(s)+\lambda\rho(\tilde{x}_{\rho}(s)+\hat
{x}(s)),\bar{y}(s)+\lambda(y_{\rho}(s)-\bar{y}(s)),\bar{u}(s)+\lambda
\rho(u(s)-\bar{u}(s)))
\]
for $l=x,y,u$. Then%
\[%
\begin{array}
[c]{rl}%
J_{2}(s)= & \int\nolimits_{0}^{1}(\tilde{f}_{x}(s)-f_{x}(s))d\lambda
\rho(\tilde{x}_{\rho}(s)+\hat{x}(s))+\int\nolimits_{0}^{1}(\tilde{f}%
_{y}(s)-f_{y}(s))d\lambda(y_{\rho}(s)-\bar{y}(s))\\
& +\int\nolimits_{0}^{1}(\tilde{f}_{u}(s)-f_{u}(s))d\lambda\rho(u(s)-\bar
{u}(s)).
\end{array}
\]
We only prove that%
\[
\mathbb{\hat{E}}[\mid\int_{0}^{T}\int\nolimits_{0}^{1}(\tilde{f}_{y}%
(s)-f_{y}(s))d\lambda(y_{\rho}(s)-\bar{y}(s))m(s)ds\mid]=o(\rho).
\]
The proofs of the other terms are similar.

By Proposition 2.15 in \cite{HJPS}, we have%
\[%
\begin{array}
[c]{l}%
\mid y_{\rho}(t)-\bar{y}(t)\mid^{2}\\
\leq C(\mathbb{\hat{E}}_{t}[\mid\phi(x_{\rho}(T))-\phi(\bar{x}(T))\mid^{2}]\\
\ \ +\mathbb{\hat{E}}_{t}[\int_{t}^{T}\mid f(s,x_{\rho}(s),\bar{y}(s),\bar
{u}(s)+\rho(u(s)-\bar{u}(s)))-f(s)\mid^{2}ds]).
\end{array}
\]
Then, by Proposition \ref{variational derivative-forward},%
\[%
\begin{array}
[c]{l}%
\underset{0\leq t\leq T}{\sup}\mathbb{\hat{E}}[\mid y_{\rho}(t)-\bar{y}%
(t)\mid^{2}]\\
\leq C(\mathbb{\hat{E}}[\mid x_{\rho}(T)-\bar{x}(T)\mid^{2}]\\
\ \ +\int_{0}^{T}(\mathbb{\hat{E}}[\mid x_{\rho}(t)-\bar{x}(t)\mid
^{2}]+\mathbb{\hat{E}}[\rho^{2}\mid u(t)-\bar{u}(t)\mid^{2}])dt)\\
\leq C(\mathbb{\hat{E}}[\rho^{2}\mid\tilde{x}_{\rho}(t)+\hat{x}(t)\mid^{2}]\\
\ \ +\int_{0}^{T}(\mathbb{\hat{E}}[\rho^{2}\mid\tilde{x}_{\rho}(t)+\hat
{x}(t)\mid^{2}]+\mathbb{\hat{E}}[\rho^{2}\mid u(t)-\bar{u}(t)\mid^{2}])dt)\\
\leq C\rho^{2}.
\end{array}
\]

Let $\alpha\in(0,1)$ be fixed. For each $N>0$, we have%
\[%
\begin{array}
[c]{l}%
\mathbb{\hat{E}}[|\int_{0}^{T}\int\nolimits_{0}^{1}(\tilde{f}_{y}%
(s)-f_{y}(s))I_{\{ \mid y_{\rho}(s)-\bar{y}(s)\mid>N\rho\}}d\lambda(y_{\rho
}(s)-\bar{y}(s))m(s)ds|]\\
\leq C\mathbb{\hat{E}}[\int_{0}^{T}I_{\{ \mid y_{\rho}(s)-\bar{y}(s)\mid
>N\rho\}}\mid y_{\rho}(s)-\bar{y}(s)\mid m(s)ds]\\
\leq\frac{C}{N^{\alpha}\rho^{\alpha}}\mathbb{\hat{E}}[\int_{0}^{T}\mid
y_{\rho}(s)-\bar{y}(s)\mid^{1+\alpha}m(s)ds]\\
\leq\frac{C}{N^{\alpha}\rho^{\alpha}}(\mathbb{\hat{E}}[\int_{0}^{T}\mid
y_{\rho}(s)-\bar{y}(s)\mid^{2}ds])^{\frac{1+\alpha}{2}}(\mathbb{\hat{E}}%
[\int_{0}^{T}|m(s)|^{\frac{2}{1-\alpha}}ds])^{\frac{1-\alpha}{2}}\\
\leq\frac{C}{N^{\alpha}}\rho,
\end{array}
\]%
\[%
\begin{array}
[c]{l}%
\mathbb{\hat{E}}[|\int_{0}^{T}\int\nolimits_{0}^{1}(\tilde{f}_{y}%
(s)-f_{y}(s))(I_{\{|\tilde{x}_{\rho}(s)+\hat{x}(s)|>N\}}+I_{\{|u(s)-\bar
{u}(s)|>N\}})d\lambda(y_{\rho}(s)-\bar{y}(s))m(s)ds|]\\
\leq C\mathbb{\hat{E}}[\int_{0}^{T}(I_{\{|\tilde{x}_{\rho}(s)+\hat{x}%
(s)|>N\}}+I_{\{|u(s)-\bar{u}(s)|>N\}})\mid y_{\rho}(s)-\bar{y}(s)\mid
m(s)ds]\\
\leq\frac{C}{N^{\alpha}}\mathbb{\hat{E}}[\int_{0}^{T}(|\tilde{x}_{\rho
}(s)+\hat{x}(s)|^{\alpha}+|u(s)-\bar{u}(s)|^{\alpha})\mid y_{\rho}(s)-\bar
{y}(s)\mid m(s)ds]\\
\leq\frac{C}{N^{\alpha}}(\mathbb{\hat{E}}[\int_{0}^{T}(|\tilde{x}_{\rho
}(s)+\hat{x}(s)|^{2}+|u(s)-\bar{u}(s)|^{2})ds])^{\frac{\alpha}{2}%
}(\mathbb{\hat{E}}[\mid y_{\rho}(s)-\bar{y}(s)\mid^{2}])^{\frac{1}{2}%
}(\mathbb{\hat{E}}[\mid m(s)\mid^{\frac{2}{1-\alpha}}])^{\frac{1-\alpha}{2}}\\
\leq\frac{C}{N^{\alpha}}\rho
\end{array}
\]
and%
\[%
\begin{array}
[c]{l}%
\mathbb{\hat{E}}[|\int_{0}^{T}\int\nolimits_{0}^{1}(\tilde{f}_{y}%
(s)-f_{y}(s))I_{\{|\tilde{x}_{\rho}(s)+\hat{x}(s)|\leq N\} \cap\{ \mid
y_{\rho}(s)-\bar{y}(s)\mid\leq N\rho\} \cap\{|u(s)-\bar{u}(s)|\leq
N\}}d\lambda(y_{\rho}(s)-\bar{y}(s))m(s)ds|]\\
\leq C\mathbb{\hat{E}}[\int_{0}^{T}\bar{\omega}(3N\rho)\mid y_{\rho}%
(s)-\bar{y}(s)\mid m(s)ds]\\
\leq C\bar{\omega}(3N\rho)(\mathbb{\hat{E}}[\int_{0}^{T}\mid y_{\rho}%
(s)-\bar{y}(s)\mid^{2}ds])^{\frac{1}{2}}(\mathbb{\hat{E}}[\int_{0}%
^{T}|m(s)|^{2}ds])^{\frac{1}{2}}\\
\leq C\bar{\omega}(3N\rho)\rho.
\end{array}
\]
Thus we get for each $N>0$,
\begin{align*}
\mathbb{\hat{E}}[  &  \mid\int_{0}^{T}\int\nolimits_{0}^{1}(\tilde{f}%
_{y}(s)-f_{y}(s))d\lambda(y_{\rho}(s)-\bar{y}(s))m(s)ds\mid]\\
&  \leq C\bar{\omega}(3N\rho)\rho+\frac{C}{N^{\alpha}}\rho,
\end{align*}
which easily implies that $\mathbb{\hat{E}}[\mid\int_{0}^{T}\int%
\nolimits_{0}^{1}(\tilde{f}_{y}(s)-f_{y}(s))d\lambda(y_{\rho}(s)-\bar
{y}(s))m(s)ds\mid]=o(\rho)$.

The proof is complete.
\end{proof}

\textbf{Proof of Theorem \ref{variational derivative-y}.}

\textbf{ Step 1.} We first prove that $\underset{\rho\rightarrow0}{\lim}%
\frac{y_{\rho}(0)-\bar{y}(0)}{\rho}$ exists.

Consider%
\[%
\begin{array}
[c]{rl}%
y_{\rho}(t)-\bar{y}(t)= & \phi(x_{\rho}(T))-\phi(\bar{x}(T))+\int_{t}%
^{T}(f_{\rho}(s)-f(s))ds-\int_{t}^{T}(z_{\rho}(s)-\bar{z}(s))dB(s)\\
& -(K_{\rho}(T)-K_{\rho}(t))+(\bar{K}(T)-\bar{K}(t)).
\end{array}
\]
It yields that%
\[%
\begin{array}
[c]{rl}%
\bar{K}(t)+y_{\rho}(t)-\bar{y}(t)= & \bar{K}(T)+\phi_{x}(\bar{x}(T))\rho
\hat{x}(T)+J_{1}-\int_{t}^{T}(z_{\rho}(s)-\bar{z}(s))dB(s)-(K_{\rho
}(T)-K_{\rho}(t))\\
& +\int_{t}^{T}[f_{x}(s)(x_{\rho}(s)-\bar{x}(s))+f_{y}(s)(y_{\rho}(s)-\bar
{y}(s))+f_{u}(s)\rho(u(s)-\bar{u}(s))+J_{2}(s)]ds.
\end{array}
\]
Applying It\^{o}'s formula to $m(t)(\bar{K}(t)+y_{\rho}(t)-\bar{y}(t))$, we
can get
\begin{equation}%
\begin{array}
[c]{rl}%
y_{\rho}(0)-\bar{y}(0)= & \mathbb{\hat{E}[}(\bar{K}(T)+\phi_{x}(\bar
{x}(T))\rho\hat{x}(T)+J_{1})m(T)\\
& +\int_{t}^{T}(f_{x}(s)(x_{\rho}(s)-\bar{x}(s))-f_{y}(s)\bar{K}%
(s)+f_{u}(s)\rho(u(s)-\bar{u}(s))+J_{2}(s))m(s)ds].
\end{array}
\label{v-equation-y-1}%
\end{equation}
Note that%
\[
\bar{K}(T)m(T)=\int_{0}^{T}f_{y}(s)\bar{K}(s)m(s)ds+\int_{0}^{T}m(s)d\bar
{K}(s),
\]
then (\ref{v-equation-y-1}) becomes%
\[%
\begin{array}
[c]{rl}%
y_{\rho}(0)-\bar{y}(0)= & \mathbb{\hat{E}[}(\phi_{x}(\bar{x}(T))\rho\hat
{x}(T)+J_{1})m(T)+\int_{0}^{T}m(s)d\bar{K}(s)\\
& +\int_{t}^{T}(f_{x}(s)(x_{\rho}(s)-\bar{x}(s))+f_{u}(s)\rho(u(s)-\bar
{u}(s))+J_{2}(s))m(s)ds].
\end{array}
\]
By Lemma \ref{variational derivative-backward},%
\begin{equation}%
\begin{array}
[c]{rl}%
y_{\rho}(0)-\bar{y}(0)= & \mathbb{\hat{E}[}\phi_{x}(\bar{x}(T))\rho\hat
{x}(T)m(T)+\int_{0}^{T}m(s)d\bar{K}(s)\\
& +\int_{0}^{T}(f_{x}(s)\rho\hat{x}(s)+f_{u}(s)\rho(u(s)-\bar{u}%
(s)))m(s)ds]+o(\rho).
\end{array}
\label{v-equation-y-2}%
\end{equation}
Since $\bar{u}(\cdot)$ is an optimal control, we have%
\begin{equation}
\frac{y_{\rho}(0)-\bar{y}(0)}{\rho}=\mathbb{\hat{E}[}\frac{\int_{0}%
^{T}m(s)d\bar{K}(s)}{\rho}+\Theta^{u}]+o(1)\geq0. \label{v-equation-y-2-1}%
\end{equation}
Note that $\frac{\int_{0}^{T}m(s)d\bar{K}(s)}{\rho}$ decreases as
$\rho\downarrow0$. It yields that $\mathbb{\hat{E}[}\frac{\int_{0}%
^{T}m(s)d\bar{K}(s)}{\rho}+\Theta^{u}]$ decreases. Since $\mathbb{\hat
{E}[\cdot]}$ is sublinear,
\[
\mathbb{\hat{E}[}\frac{\int_{0}^{T}m(s)d\bar{K}(s)}{\rho}+\Theta^{u}%
]\geq\mathbb{\hat{E}[}\frac{\int_{0}^{T}m(s)d\bar{K}(s)}{\rho}]-\mathbb{\hat
{E}[}-\Theta^{u}]=-\mathbb{\hat{E}[}-\Theta^{u}].
\]
Thus, the limit of $\frac{y_{\rho}(0)-\bar{y}(0)}{\rho}$ exists as
$\rho\rightarrow0$.

\textbf{Step 2.} Then, we prove that there exists a $P^{u}\in\mathcal{P}$ such
that $E_{P^{u}}[\bar{K}(T)]=0$.

Since $\mathcal{P}$ is weakly compact and $\int_{0}^{T}m(s)d\bar{K}%
(s)+\rho\Theta^{u}\in L_{G}^{2}(\Omega_{T})$, there exists a $P^{\rho,u}%
\in\mathcal{P}$ which depends on $\rho$ and $u(\cdot)$ such that%
\[
\mathbb{\hat{E}[}\frac{\int_{0}^{T}m(s)d\bar{K}(s)}{\rho}+\Theta
^{u}]=E_{P^{\rho,u}}\mathbb{[}\frac{\int_{0}^{T}m(s)d\bar{K}(s)}{\rho}%
+\Theta^{u}].
\]
Thus (\ref{v-equation-y-2-1}) becomes%
\begin{equation}
\frac{y_{\rho}(0)-\bar{y}(0)}{\rho}=E_{P^{\rho,u}}\mathbb{[}\frac{\int_{0}%
^{T}m(s)d\bar{K}(s)}{\rho}+\Theta^{u}]+o(1)\geq0. \label{v-equation-y-3}%
\end{equation}
Obviously, there exist a $P^{u}\in\mathcal{P}$ and a sequence $P^{\rho_{n}%
,u}\rightarrow P^{u}$ weakly as $\rho_{n}\rightarrow0$. By
(\ref{v-equation-y-3}), we get%
\[
E_{P^{\rho_{n},u}}\mathbb{[}\int_{0}^{T}m(s)d\bar{K}(s)]=y_{\rho_{n}}%
(0)-\bar{y}(0)-\rho_{n}E_{P^{\rho_{n},u}}\mathbb{[}\Theta^{u}]-o(\rho_{n}).
\]
Note that
\[
\mid E_{P^{\rho_{n},u}}\mathbb{[}\Theta^{u}]\mid\leq\mathbb{\hat{E}[\mid
}\Theta^{u}\mid]<\infty\text{,}%
\]
it yields that $E_{P^{\rho_{n},u}}\mathbb{[}\int_{0}^{T}m(s)d\bar
{K}(s)]\rightarrow0$ as $n\rightarrow\infty$. Since $\int_{0}^{T}m(s)d\bar
{K}(s)$ belongs to $L_{G}^{2}(\Omega_{T})$, it is easy to see that%
\[
E_{P^{\rho_{n},u}}[\int_{0}^{T}m(s)d\bar{K}(s)]\rightarrow E_{P^{u}}[\int%
_{0}^{T}m(s)d\bar{K}(s)]\text{.}%
\]
Thus we deduce that $E_{P^{u}}[\bar{K}(T)]=0$.

\textbf{Step 3.} At last, we prove that $\underset{\rho\rightarrow0}{\lim
}\frac{y_{\rho}(0)-\bar{y}(0)}{\rho}=E_{P^{u}}\mathbb{[}\Theta^{u}%
]=\underset{P\in\mathcal{P}^{\ast}}{\sup}E_{P}[\Theta^{u}].$

By (\ref{v-equation-y-3}) and $\int_{0}^{T}m(s)d\bar{K}(s)\leq0$,
\[
\frac{y_{\rho}(0)-\bar{y}(0)}{\rho}\leq E_{P^{\rho,u}}\mathbb{[}\Theta
^{u}]+o(1).
\]
Then%
\begin{equation}
\underset{\rho\rightarrow0}{\lim}\frac{y_{\rho}(0)-\bar{y}(0)}{\rho}%
\leq\underset{n\rightarrow\infty}{\lim}E_{P^{\rho_{n},u}}\mathbb{[}\Theta
^{u}]=E_{P^{u}}\mathbb{[}\Theta^{u}]. \label{v-equation-y-3-1}%
\end{equation}
For any $P\in\mathcal{P}^{\ast}$, by (\ref{v-equation-y-2-1}), we have%
\[%
\begin{array}
[c]{rl}%
\frac{y_{\rho}(0)-\bar{y}(0)}{\rho} & =\mathbb{\hat{E}[}\frac{\int_{0}%
^{T}m(s)d\bar{K}(s)}{\rho}+\Theta^{u}]+o(1)\\
& \geq E_{P}\mathbb{[}\frac{\int_{0}^{T}m(s)d\bar{K}(s)}{\rho}+\Theta
^{u}]+o(1)\\
& =E_{P}\mathbb{[}\Theta^{u}]+o(1).
\end{array}
\]
It yields that
\begin{equation}
\underset{\rho\rightarrow0}{\lim}\frac{y_{\rho}(0)-\bar{y}(0)}{\rho}\geq
E_{P}\mathbb{[}\Theta^{u}],\; \forall P\in\mathcal{P}^{\ast}.
\label{v-equation-y-3-2}%
\end{equation}
Note that $P^{u}\in\mathcal{P}^{\ast}$, then, by (\ref{v-equation-y-3-1}) and
(\ref{v-equation-y-3-2}), we obtain%
\[
\underset{\rho\rightarrow0}{\lim}\frac{y_{\rho}(0)-\bar{y}(0)}{\rho}=E_{P^{u}%
}\mathbb{[}\Theta^{u}]=\underset{P\in\mathcal{P}^{\ast}}{\sup}E_{P}[\Theta
^{u}].
\]
This completes the proof. $\Box$

\subsection{Variational inequality}

We obtain the following variational inequality.

\begin{theorem}
\label{variational ineq} Suppose (H1)-(H3) hold. Then there exists a $P^{\ast
}\in\mathcal{P}^{\ast}$ such that%
\[
\underset{u\in\mathcal{U}[0,T]}{\inf}E_{P^{\ast}}[\Theta^{u}]\geq0.
\]

\end{theorem}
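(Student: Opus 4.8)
The plan is to combine Theorem \ref{variational derivative-y} with a minimax argument on the pair $(\mathcal{U}[0,T],\mathcal{P}^{\ast})$. From Theorem \ref{variational derivative-y} and the optimality of $\bar{u}(\cdot)$ we know that for every $u(\cdot)\in\mathcal{U}[0,T]$,
\[
0\le\lim_{\rho\rightarrow0}\frac{y_{\rho}(0)-\bar{y}(0)}{\rho}=\sup_{P\in\mathcal{P}^{\ast}}E_{P}[\Theta^{u}],
\]
so that $\inf_{u\in\mathcal{U}[0,T]}\sup_{P\in\mathcal{P}^{\ast}}E_{P}[\Theta^{u}]\ge0$. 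The statement will follow once we may exchange the $\inf$ and the $\sup$ and check that the resulting outer supremum over $\mathcal{P}^{\ast}$ is attained.

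First I would record the structural facts needed for the minimax theorem. The set $\mathcal{P}^{\ast}=\{P\in\mathcal{P}:E_{P}[\bar{K}(T)]=0\}$ is nonempty, since $\bar{K}$ is a $G$-martingale with $\bar{K}(0)=0$, hence $\mathbb{\hat{E}}[\bar{K}(T)]=0$ and this maximum over $\mathcal{P}$ is attained by Theorem \ref{the2.7}; it is convex, being cut out by a single linear constraint; and it is weakly compact, because $\bar{K}(T)\in L_{G}^{2}(\Omega_{T})\subset L_{G}^{1}(\Omega)$ makes $P\mapsto E_{P}[\bar{K}(T)]$ weakly continuous by Proposition \ref{npro-2.8}, so $\mathcal{P}^{\ast}$ is a weakly closed subset of the weakly compact set $\mathcal{P}$. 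Moreover, since the variational equation (\ref{variational-eq-1}) is linear in $(\hat{x},u-\bar{u})$ with bounded coefficients, the map $u(\cdot)\mapsto\hat{x}(\cdot)$ is affine and continuous from $M_{G}^{\beta}(0,T;\mathbb{R}^{m})$ into $M_{G}^{2}(0,T;\mathbb{R}^{n})$; consequently $u(\cdot)\mapsto\Theta^{u}$ is affine and $u(\cdot)\mapsto E_{P}[\Theta^{u}]$ is affine and norm-continuous for each fixed $P$. Finally, $\mathcal{U}[0,T]$ is convex.

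Then I would apply Sion's minimax theorem with $X=\mathcal{U}[0,T]$ (a convex subset of the normed space $M_{G}^{\beta}(0,T;\mathbb{R}^{m})$), $Y=\mathcal{P}^{\ast}$ (a convex, weakly compact subset), and $F(u,P)=E_{P}[\Theta^{u}]$. For each fixed $u$, $P\mapsto F(u,P)$ is affine, hence quasiconcave, and weakly upper semicontinuous, indeed weakly continuous by Proposition \ref{npro-2.8} since $\Theta^{u}\in L_{G}^{1}(\Omega)$; for each fixed $P$, $u\mapsto F(u,P)$ is affine, hence quasiconvex, and norm-continuous, hence lower semicontinuous. Sion's theorem then gives
\[
\sup_{P\in\mathcal{P}^{\ast}}\inf_{u\in\mathcal{U}[0,T]}E_{P}[\Theta^{u}]=\inf_{u\in\mathcal{U}[0,T]}\sup_{P\in\mathcal{P}^{\ast}}E_{P}[\Theta^{u}]\ge0.
\]
To finish, I would use that $P\mapsto\inf_{u}E_{P}[\Theta^{u}]$ is an infimum of weakly continuous functions, hence weakly upper semicontinuous, so it attains its maximum on the weakly compact set $\mathcal{P}^{\ast}$ at some $P^{\ast}$; for this $P^{\ast}$ we obtain $\inf_{u\in\mathcal{U}[0,T]}E_{P^{\ast}}[\Theta^{u}]\ge0$, which is the assertion.

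The main obstacle will be checking cleanly the hypotheses of the minimax theorem rather than the minimax theorem itself: precisely, the affine dependence of $\Theta^{u}$ on $u(\cdot)$, which rests on the linearity of the variational equation (\ref{variational-eq-1}), and the weak compactness of $\mathcal{P}^{\ast}$, which rests on the weak continuity of $P\mapsto E_{P}[\bar{K}(T)]$ from Proposition \ref{npro-2.8}. Once these are in place, both the exchange of $\inf$ and $\sup$ and the attainment of the outer supremum are routine.
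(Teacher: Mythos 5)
Your proposal is correct and follows essentially the same route as the paper: derive $\inf_u\sup_{P\in\mathcal{P}^{\ast}}E_P[\Theta^u]\ge 0$ from Theorem \ref{variational derivative-y} and optimality, verify convexity/weak compactness of $\mathcal{P}^{\ast}$ and affineness of $u\mapsto\Theta^u$, and apply Sion's minimax theorem. The only cosmetic difference is the last step: the paper extracts $P^{\ast}$ as a weak limit of an $\varepsilon_n$-optimal sequence $P^{\varepsilon_n}$ and passes to the limit via Proposition \ref{npro-2.8}, whereas you invoke weak upper semicontinuity of $P\mapsto\inf_u E_P[\Theta^u]$ on the weakly compact set $\mathcal{P}^{\ast}$ — these are equivalent arguments.
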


\begin{proof}
By Theorem \ref{variational derivative-y}, we can get for any $u(\cdot
)\in\mathcal{U}[0,T]$,
\[
\underset{\rho\rightarrow0}{\lim}\frac{y_{\rho}(0)-\bar{y}(0)}{\rho}%
=\sup_{P\in\mathcal{P}^{\ast}}E_{P}[\Theta^{u}]\geq0.
\]
Then
\[
\underset{u\in\mathcal{U}[0,T]}{\inf}\sup_{P\in\mathcal{P}^{\ast}}E_{P}%
[\Theta^{u}]\geq0.
\]

It is easy to check that $\mathcal{P}^{\ast}$ is convex and weakly compact,
and for $\lambda\in\lbrack0,1]$, $u$, $u^{\prime}\in\mathcal{U}[0,T]$,%
\[
\Theta^{\lambda u+(1-\lambda)u^{\prime}}=\lambda\Theta^{u}+(1-\lambda
)\Theta^{u^{\prime}}.
\]
Thus, by Sion's minimax theorem, we obtain%
\[
\underset{u\in\mathcal{U}[0,T]}{\inf}\sup_{P\in\mathcal{P}^{\ast}}E_{P}%
[\Theta^{u}]=\sup_{P\in\mathcal{P}^{\ast}}\underset{u\in\mathcal{U}%
[0,T]}{\inf}E_{P}[\Theta^{u}].
\]
Then, for each $\varepsilon>0,$ there exists a $P^{\varepsilon}\in
\mathcal{P}^{\ast}$ such that
\[
\underset{u\in\mathcal{U}[0,T]}{\inf}E_{P^{\varepsilon}}[\Theta^{u}%
]\geq-\varepsilon.
\]
Since $\mathcal{P}^{\ast}$ is weakly compact, there exist a $P^{\ast}%
\in\mathcal{P}^{\ast}$ and a sequence $P^{\varepsilon_{n}}\rightarrow P^{\ast
}$ weakly as $\varepsilon_{n}\rightarrow0$. Note that for any $u(\cdot
)\in\mathcal{U}[0,T]$,
\[
E_{P^{\varepsilon_{n}}}[\Theta^{u}]\geq-\varepsilon_{n}.
\]
Letting $\varepsilon_{n}\rightarrow0$, it yields that for any $u(\cdot
)\in\mathcal{U}[0,T]$,%
\[
E_{P^{\ast}}[\Theta^{u}]\geq0.
\]
Thus, we have
\[
\underset{u\in\mathcal{U}[0,T]}{\inf}E_{P^{\ast}}[\Theta^{u}]\geq0.
\]
This completes the proof.
\end{proof}

\subsection{Maximum principle}

Consider the following kind of BSDE under $P^{\ast}$:%
\begin{equation}
\left\{
\begin{array}
[c]{rl}%
-dp(t)= & [(f_{x}(t))^{T}+(b_{x}(t))^{T}p(t)+f_{y}(t)p(t)]dt+(\sigma_{x}%
^{i}(t))^{T}q^{j}(t)d\langle B^{i},B^{j}\rangle(t)-q^{i}(t)dB^{i}(t)-dN(t),\\
p(T)= & (\phi_{x}(\bar{x}(T)))^{T},
\end{array}
\right.  \label{adjoint neweq-1}%
\end{equation}
where $(p(t))_{t\in\lbrack0,T]}\in M_{P^{\ast}}^{2}(0,T;\mathbb{R}^{n})=\{
\eta:\eta$ is $\mathbb{R}^{n}$-valued progressively measurable and
$E_{P^{\ast}}[\int_{0}^{T}|\eta_{t}|^{2}dt]<\infty\}$, $(q(t))_{t\in
\lbrack0,T]}\in M_{P^{\ast}}^{2}(0,T;\mathbb{R}^{n\times d})$, $(N_{t}%
)_{t\in\lbrack0,T]}\in\mathcal{M}_{P^{\ast}}^{2,\perp}(0,T;\mathbb{R}%
^{n}):=\{N:$ all $\mathbb{R}^{n}$-valued square integrable martingale that is
orthogonal to $B\}$.

\begin{remark}
Note that $B$ is only a continuous martingale under $P^{\ast}$ and the
martingale representation theorem may not hold. So it is necessary to
introduce the third term $N$ which is orthogonal to $B$.
\end{remark}

Following El Karoui and Huang \cite{EM} and Buckdahn et. al. \cite{BLRT},
there exists a unique $(p(\cdot),q(\cdot),N(\cdot))\in M_{P^{\ast}}%
^{2}(0,T;\mathbb{R}^{n})\times M_{P^{\ast}}^{2}(0,T;\mathbb{R}^{n\times
d})\times\mathcal{M}_{P^{\ast}}^{2,\perp}(0,T;\mathbb{R}^{n})$ which solves
the adjoint equation (\ref{adjoint neweq-1}). Applying It\^{o}'s formula to
$\langle\hat{x}(t),m(t)p(t)\rangle$, we obtain%
\begin{align*}
&  E_{P^{\ast}}[\phi_{x}(\bar{x}(T))\hat{x}(T)m(T)+\int_{0}^{T}f_{x}(s)\hat
{x}(s)m(s)ds]\\
&  =E_{P^{\ast}}[\int_{0}^{T}(m(t)\langle p(t),b_{u}(t)(u(t)-\bar
{u}(t))\rangle+m(t)\langle q^{j}(t),\sigma_{u}^{i}(t)(u(t)-\bar{u}%
(t))\rangle\gamma^{ij}(t))dt],
\end{align*}
where $\Gamma(t)=(\gamma^{ij}(t))$, $d\langle B^{i},B^{j}\rangle
(t)=\gamma^{ij}(t)dt$. We define the Hamiltonian $H:\mathbb{R}^{n}%
\times\mathbb{R}\times\mathbb{R}^{m}\times\mathbb{R}^{n}\times\mathbb{R}%
^{n\times d}\times\lbrack0,T]\rightarrow\mathbb{R}$ as follows:%
\[
H(x,y,u,p,q,t)=\langle p,b(t,x,u)\rangle+\langle q^{j},\sigma^{i}%
(t,x,u)\rangle\gamma^{ij}(t)+f(t,x,y,u).
\]
Thus%
\[%
\begin{array}
[c]{rl}%
E_{P^{\ast}}[\Theta^{u}] & =E_{P^{\ast}}[\int_{0}^{T}m(t)\langle(b_{u}%
(t))^{T}p(t)+(f_{u}(t))^{T}+(\sigma_{u}^{i}(t))^{T}q^{j}(t)\gamma
^{ij}(t),u(t)-\bar{u}(t)\rangle dt]\\
& =E_{P^{\ast}}[\int_{0}^{T}m(t)\langle(H_{u}(\bar{x}(t),\bar{y}(t),\bar
{u}(t),p(t),q(t),t))^{T},u(t)-\bar{u}(t)\rangle dt]\\
& =E_{P^{\ast}}[\int_{0}^{T}m(t)H_{u}(\bar{x}(t),\bar{y}(t),\bar
{u}(t),p(t),q(t),t)(u(t)-\bar{u}(t))dt].
\end{array}
\]
By Theorem \ref{variational ineq}, $E_{P^{\ast}}[\Theta^{u}]\geq0$ for each
$u(\cdot)\in\mathcal{U}[0,T]$, then we can get%
\begin{equation}
H_{u}(\bar{x}(t),\bar{y}(t),\bar{u}(t),p(t),q(t),t)(u-\bar{u}(t))\geq0,\;
\forall u\in U,\;a.e.,\;P^{\ast}-a.s.. \label{MP}%
\end{equation}
We summarize the above analysis to the following stochastic maximum principle.

\begin{theorem}
\label{Thm-MP-new} Suppose (H1)-(H3) hold. Let $\bar{u}(\cdot)$ be an optimal
control and $(\bar{x}(\cdot),\bar{y}(\cdot),\bar{z}(\cdot),\bar{K}(\cdot))$ be
the corresponding trajectory. Then there exist a $P^{\ast}\in\mathcal{P}%
^{\ast}$ and $(p(\cdot),q(\cdot),N(\cdot))\in M_{P^{\ast}}^{2}(0,T;\mathbb{R}%
^{n})\times M_{P^{\ast}}^{2}(0,T;\mathbb{R}^{n\times d})\times\mathcal{M}%
_{P^{\ast}}^{2,\perp}(0,T;\mathbb{R}^{n})$, which is the solution of the
adjoint equation (\ref{adjoint neweq-1}), such that the inequality (\ref{MP}) holds.
\end{theorem}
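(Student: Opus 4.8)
The plan is to combine the variational inequality of Theorem~\ref{variational ineq} with a duality identity coming from the adjoint equation~(\ref{adjoint neweq-1}), and then to localize the resulting integral inequality. First I would fix the reference probability $P^{\ast}\in\mathcal{P}^{\ast}$ produced by Theorem~\ref{variational ineq}, so that $E_{P^{\ast}}[\Theta^{u}]\geq0$ for every $u(\cdot)\in\mathcal{U}[0,T]$. Under $P^{\ast}$ the canonical process $B$ is a continuous square-integrable martingale with $d\langle B^{i},B^{j}\rangle(t)=\gamma^{ij}(t)\,dt$ but in general not a $P^{\ast}$-Brownian motion, so the martingale representation property with respect to $B$ may fail; this is exactly why the adjoint equation~(\ref{adjoint neweq-1}) must carry the extra term $N\in\mathcal{M}_{P^{\ast}}^{2,\perp}(0,T;\mathbb{R}^{n})$ orthogonal to $B$. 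Invoking the existence and uniqueness theory for BSDEs driven by a general continuous martingale (El Karoui--Huang \cite{EM}, Buckdahn et al.\ \cite{BLRT}), I obtain a unique triple $(p(\cdot),q(\cdot),N(\cdot))$ in $M_{P^{\ast}}^{2}(0,T;\mathbb{R}^{n})\times M_{P^{\ast}}^{2}(0,T;\mathbb{R}^{n\times d})\times\mathcal{M}_{P^{\ast}}^{2,\perp}(0,T;\mathbb{R}^{n})$ solving~(\ref{adjoint neweq-1}).

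Next I would derive the duality relation. Since $\hat{x}\in M_{G}^{2}(0,T;\mathbb{R}^{n})\subset M_{P^{\ast}}^{2}(0,T;\mathbb{R}^{n})$ and the $G$-It\^{o} integral coincides with the $P^{\ast}$-It\^{o} integral on this class, the variational equation~(\ref{variational-eq-1}) is a genuine It\^{o} SDE on $(\Omega,\mathcal{F}_{T},P^{\ast})$. Applying It\^{o}'s formula to $\langle\hat{x}(t),m(t)p(t)\rangle$ on $[0,T]$, taking $E_{P^{\ast}}[\cdot]$, and discarding the martingale parts (the $dB^{i}$ stochastic integrals and $dN$ have zero $P^{\ast}$-mean, using $N\perp B$), the terms carrying $b_{x}$, $\sigma_{x}^{i}$ and $f_{y}$ cancel against the drift of $(p,q,N)$ and the finite-variation factor $m$, leaving
\[
E_{P^{\ast}}\!\Big[\phi_{x}(\bar{x}(T))\hat{x}(T)m(T)+\int_{0}^{T}f_{x}(s)\hat{x}(s)m(s)\,ds\Big]=E_{P^{\ast}}\!\Big[\int_{0}^{T}m(t)\big(\langle p(t),b_{u}(t)(u(t)-\bar{u}(t))\rangle+\langle q^{j}(t),\sigma_{u}^{i}(t)(u(t)-\bar{u}(t))\rangle\gamma^{ij}(t)\big)dt\Big].
\]
Substituting this into the definition of $\Theta^{u}$ and recognising the coefficient of $u-\bar{u}$ as $H_{u}$ for the Hamiltonian $H$ gives
\[
E_{P^{\ast}}[\Theta^{u}]=E_{P^{\ast}}\!\Big[\int_{0}^{T}m(t)\,H_{u}(\bar{x}(t),\bar{y}(t),\bar{u}(t),p(t),q(t),t)\,(u(t)-\bar{u}(t))\,dt\Big].
\]

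Finally I would pass from the integral inequality to the pointwise one. For a fixed $v\in U$, a point $s\in[0,T)$, $h>0$ and a set $A\in\mathcal{F}_{s}$, the process $u(t,\omega)=\bar{u}(t,\omega)+I_{[s,s+h]}(t)I_{A}(\omega)(v-\bar{u}(t,\omega))$ is progressively measurable and, by convexity of $U$ and $\bar{u}\in M_{G}^{\beta}(0,T;\mathbb{R}^{m})$, lies in $\mathcal{U}[0,T]$; plugging it into $E_{P^{\ast}}[\Theta^{u}]\geq0$, dividing by $h$, letting $h\downarrow0$ by Lebesgue differentiation, and then letting $A$ range over $\mathcal{F}_{s}$ yields $m(s)H_{u}(\bar{x}(s),\bar{y}(s),\bar{u}(s),p(s),q(s),s)(v-\bar{u}(s))\geq0$ for a.e.\ $s$, $P^{\ast}$-a.s. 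Since $m(s)>0$ it can be divided out, and running $v$ through a countable dense subset of $U$ together with the continuity of $H_{u}$ in $u$ gives~(\ref{MP}) for all $v\in U$; combined with the triple $(p,q,N)$ this is precisely the assertion. I expect the main obstacle to be the bookkeeping at this last step — choosing the exceptional $P^{\ast}$-null set uniformly in $v$ and checking that the $I_{A}$-perturbed controls genuinely belong to $\mathcal{U}[0,T]$ — together with making sure every object built in the $G$-framework ($\hat{x}$, $m$, and the stochastic integrals) transfers consistently to the single probability space $(\Omega,\mathcal{F}_{T},P^{\ast})$, so that the adjoint BSDE and It\^{o}'s formula are legitimate there.
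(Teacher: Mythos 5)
Your route is the same as the paper's: take the $P^{\ast}$ furnished by Theorem \ref{variational ineq}, solve the adjoint equation (\ref{adjoint neweq-1}) under $P^{\ast}$ with the orthogonal martingale term $N$ via \cite{EM,BLRT}, apply It\^{o}'s formula to $\langle\hat{x}(t),m(t)p(t)\rangle$ to convert $E_{P^{\ast}}[\Theta^{u}]$ into $E_{P^{\ast}}[\int_{0}^{T}m(t)H_{u}(\bar{x}(t),\bar{y}(t),\bar{u}(t),p(t),q(t),t)(u(t)-\bar{u}(t))dt]$, and then pass from the integral inequality to the pointwise one (\ref{MP}). The paper states this last passage without detail; you supply a localization, and that is where your argument has a concrete problem.

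Your spiked control $u(t,\omega)=\bar{u}(t,\omega)+I_{[s,s+h]}(t)I_{A}(\omega)(v-\bar{u}(t,\omega))$ with an arbitrary $A\in\mathcal{F}_{s}$ is in general \emph{not} admissible: membership in $\mathcal{U}[0,T]$ requires $u(\cdot)\in M_{G}^{\beta}(0,T;\mathbb{R}^{m})$, and the elements of $M_{G}^{\beta}$ are completions of step processes with $L_{ip}$ coefficients, hence quasi-continuous; the indicator of a general measurable set need not belong to this space. The sets you actually need to localize on are defined through $p$ and $q$, which are only $P^{\ast}$-adapted objects living outside the $G$-framework, so this is not a removable technicality — as written the perturbed control may leave the admissible class and the inequality $E_{P^{\ast}}[\Theta^{u}]\geq 0$ cannot be invoked for it (you flagged this obstacle yourself but did not resolve it). The repair is standard and keeps your structure: perturb instead with $u(t)=\bar{u}(t)+\xi I_{[s,s+h]}(t)(v-\bar{u}(t))$ where $\xi\in L_{ip}(\Omega_{s})$, $0\leq\xi\leq1$, which is admissible by convexity of $U$; after dividing by $h$ and using Lebesgue differentiation you obtain $E_{P^{\ast}}[\xi\,m(s)H_{u}(\bar{x}(s),\bar{y}(s),\bar{u}(s),p(s),q(s),s)(v-\bar{u}(s))]\geq0$ for a.e.\ $s$, and since such $\xi$ are dense in $L^{2}(\Omega,\sigma(B_{r}:r\leq s),P^{\ast})$ (monotone class), you may let $\xi$ approximate $I_{A}$ under the single measure $P^{\ast}$ and conclude the pointwise inequality $P^{\ast}$-a.s., first for $v$ in a countable dense subset of $U$ and then for all $v$ by the affine dependence on $v$. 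With this adjustment your proof matches the paper's.
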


\subsection{Sufficient condition}

In this subsection, we give the sufficient condition for optimality.

\begin{theorem}
\label{the-sufficient} Suppose (H1)-(H3) hold. Let $\bar{u}(\cdot
)\in\mathcal{U}[0,T]$ and $P^{\ast}\in\mathcal{P}^{\ast}$ satisfy that
\[
H_{u}(\bar{x}(t),\bar{y}(t),\bar{u}(t),p(t),q(t),t)(u-\bar{u}(t))\geq0,\;
\forall u\in U,\;a.e.,\;P^{\ast}-a.s.,
\]
where $(\bar{x}(\cdot),\bar{y}(\cdot),\bar{z}(\cdot),\bar{K}(\cdot))$ is the
state processes of (\ref{state-1}) and (\ref{state-2}) corresponding to
$\bar{u}(\cdot)$ and $(p(\cdot),q(\cdot),N(\cdot))$ is the solution of the
adjoint equation (\ref{adjoint neweq-1}) under $P^{\ast}$. We also assume that
$H$ is convex with respect to $x$, $y$, $u$ and $\phi$ is convex with respect
to $x$. Then $\bar{u}(\cdot)$ is an optimal control.
\end{theorem}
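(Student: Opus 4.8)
The plan is to show directly that $J(u(\cdot)) - J(\bar u(\cdot)) \geq 0$ for every admissible $u(\cdot)$, by comparing the two BSDEs and using convexity. Fix $u(\cdot)\in\mathcal U[0,T]$ with state trajectory $(x(\cdot),y(\cdot),z(\cdot),K(\cdot))$ solving (\ref{state-1})--(\ref{state-2}), and keep $(\bar x,\bar y,\bar z,\bar K)$ for $\bar u$. Since $J(u(\cdot))=y(0)$ and $J(\bar u(\cdot))=\bar y(0)$, and since $y(0),\bar y(0)$ are deterministic, it suffices to produce a lower bound for $y(0)-\bar y(0)$. The first step is to recall from (\ref{state-intro-1}) that $\bar y(0)=\sup_{P\in\mathcal P}E_P[\,\cdots\,]$ and, more usefully, that for the reference measure $P^\ast\in\mathcal P^\ast$ one has $E_{P^\ast}[\bar K(T)]=0$; hence under $P^\ast$ the process $\bar K$ is ``invisible'' in expectation and the $G$-BSDE for $\bar y$ behaves like a classical BSDE under $P^\ast$.

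The second step is the main computation. I would apply It\^o's formula under $P^\ast$ to $\langle x(t)-\bar x(t),\,m(t)p(t)\rangle$, exactly as in the derivation preceding Theorem \ref{Thm-MP-new} but with the genuine difference $x-\bar x$ in place of the first-order term $\hat x$, where $m(t)=\exp\{\int_0^t f_y(s)ds\}$ (more precisely one should use an integrating factor built from the true incremental slope of $f$ in $y$, handled via a mean-value/Bene\v s-type linearization so that the cross terms telescope). Combining this with the BSDE dynamics of $y-\bar y$ and taking $E_{P^\ast}$, the martingale terms $q\,dB$ and $dN$ vanish, the term $\int_0^T m(s)\,d\bar K(s)\le 0$ contributes nonnegatively after a sign flip (using $E_{P^\ast}[\bar K(T)]=0$ together with $\bar K$ decreasing, so $\int_0^T m\,d\bar K$ has $P^\ast$-expectation zero or the right sign), and one is left with an inequality of the form
\[
y(0)-\bar y(0)\ \ge\ E_{P^\ast}\Big[\phi(x(T))-\phi(\bar x(T))-\phi_x(\bar x(T))(x(T)-\bar x(T))\Big]
+E_{P^\ast}\Big[\int_0^T \Delta H(t)\,m(t)\,dt\Big],
\]
where $\Delta H(t)=H(x(t),y(t),u(t),p(t),q(t),t)-H(\bar x(t),\bar y(t),\bar u(t),p(t),q(t),t)-H_x(\cdot)(x(t)-\bar x(t))-H_y(\cdot)(y(t)-\bar y(t))-H_u(\cdot)(u(t)-\bar u(t))$, the integrand being the first-order Taylor remainder of $H$ at $(\bar x,\bar y,\bar u)$.

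The third step is to close the argument with the convexity hypotheses. Convexity of $\phi$ in $x$ gives $\phi(x(T))-\phi(\bar x(T))-\phi_x(\bar x(T))(x(T)-\bar x(T))\ge 0$, so the first expectation is $\ge 0$. Convexity of $H$ jointly in $(x,y,u)$ gives $H(x,y,u,p,q,t)-H(\bar x,\bar y,\bar u,p,q,t)\ge H_x(x-\bar x)+H_y(y-\bar y)+H_u(u-\bar u)$ pointwise, i.e. $\Delta H(t)\ge H_u(\bar x(t),\bar y(t),\bar u(t),p(t),q(t),t)(u(t)-\bar u(t))$ after moving the already-present linear pieces; and the maximum-principle inequality $H_u(\bar x,\bar y,\bar u,p,q,t)(u-\bar u(t))\ge 0$ for all $u\in U$, $P^\ast$-a.s., a.e.\ $t$, together with $m(t)>0$, makes $\int_0^T\Delta H(t)m(t)\,dt\ge 0$ $P^\ast$-a.s. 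Hence $y(0)-\bar y(0)\ge 0$, i.e.\ $J(u(\cdot))\ge J(\bar u(\cdot))$ for all admissible $u$, which is optimality of $\bar u(\cdot)$.

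The step I expect to be the main obstacle is the second one: making the It\^o/linearization argument rigorous under $P^\ast$ rather than under $\mathbb{\hat E}$. One must linearize $f$ and $b$ along the segment between the two trajectories with bounded (by (H2)) but merely measurable coefficients, verify that the resulting linear adjoint-type equation is exactly (\ref{adjoint neweq-1}) (or differs from it only by terms that integrate against $x-\bar x$ and cancel), and control the $\int_0^T m\,d\bar K$ term — this is where $P^\ast\in\mathcal P^\ast$, i.e.\ $E_{P^\ast}[\bar K(T)]=0$, is essential, since under a generic $P\in\mathcal P$ this term would spoil the sign. Integrability of all products (e.g.\ $m(T)p(T)(x(T)-\bar x(T))$, $m\,q$, $dN$) under $P^\ast$ follows from (H2), the a priori estimates for $G$-SDEs, and the $L^2_{P^\ast}$ bounds on $(p,q,N)$ guaranteed by the Buckdahn et al.\ / El Karoui--Huang representation cited above.
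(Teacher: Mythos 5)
Your plan is the same duality-plus-convexity argument the paper uses: set $\xi=x-\bar{x}$, $\eta=y-\bar{y}$, apply It\^{o}'s formula under $P^{\ast}$ to $\langle \xi(t),m(t)p(t)\rangle-\eta(t)m(t)$ with $m(t)=\exp\{\int_{0}^{t}f_{y}(s)ds\}$, and conclude from convexity of $H$ and $\phi$ together with the first-order condition; so in substance the proposal is correct. Two bookkeeping points need to be fixed to make it airtight. First, the competing trajectory's $G$-BSDE carries its \emph{own} decreasing $G$-martingale $K(\cdot)$, which you never mention: in the identity for $\eta(0)$ it enters as $-E_{P^{\ast}}[\int_{0}^{T}m(t)dK(t)]\geq 0$ and is dropped precisely by the ``decreasing, so the sign helps'' argument that you instead attach to $\bar{K}$. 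Second, the $\bar{K}$ term enters with the \emph{unfavorable} sign ($+E_{P^{\ast}}[\int_{0}^{T}m(t)d\bar{K}(t)]\leq 0$), so it cannot be discarded by a sign flip; it must vanish, and it does because $E_{P^{\ast}}[\bar{K}(T)]=0$, $\bar{K}(0)=0$ and $\bar{K}$ decreasing force $\bar{K}\equiv 0$ $P^{\ast}$-a.s. — you cite the right fact, but the phrasing ``expectation zero or the right sign'' blurs this.

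Two smaller remarks: no Bene\v{s}/mean-value integrating factor is needed — the paper keeps $m$ built from $f_{y}$ along the optimal trajectory and lets convexity of $H$ absorb the full nonlinear remainder; and the integrand produced by It\^{o} is the difference of $H$ linearized only in $(x,y)$ (not in $u$), so convexity yields a lower bound $H_{u}(\bar{x},\bar{y},\bar{u},p,q,t)(u-\bar{u})$ and the stationarity hypothesis is then used exactly once — your $\Delta H$ bookkeeping double-counts the $H_{u}$ term, though the conclusion is unaffected. With these corrections your argument coincides with the paper's proof.
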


\begin{proof}
For any $u(\cdot)\in\mathcal{U}[0,T]$, let $(x(\cdot),y(\cdot),z(\cdot
),K(\cdot))$ be the corresponding state processes of (\ref{state-1}) and
(\ref{state-2}). Define $\xi(t):=x(t)-\bar{x}(t)$ and $\eta(t):=y(t)-\bar
{y}(t)$. Then $\xi(\cdot)$ and $\eta(\cdot)$ satisfy the following equations
under $P^{\ast}$:%
\[
\left\{
\begin{array}
[c]{rl}%
d\xi(t)= & [b_{x}(t)\xi(t)+\alpha(t)]dt+[\sigma_{x}^{i}(t)\xi(t)+\beta
^{i}(t)]dB^{i}(t),\\
\xi(0)= & 0,
\end{array}
\right.
\]
where
\begin{align*}
\alpha(t)  &  :=-b_{x}(t)\xi(t)+b(t,x(t),u(t))-b(t,\bar{x}(t),\bar{u}(t)),\\
\beta^{i}(t)  &  :=-\sigma_{x}^{i}(t)\xi(t)+\sigma^{i}(t,x(t),u(t))-\sigma
^{i}(t,\bar{x}(t),\bar{u}(t)),
\end{align*}
and%
\[
\left\{
\begin{array}
[c]{rl}%
-d\eta(t)= & [f_{x}(t)\xi(t)+f_{y}(t)\eta(t)+\tilde{\alpha}(t)]dt-\tilde
{z}(t)dB(t)-dK(t),\\
\eta(T)= & \phi_{x}(\bar{x}(T))\xi(T)+\tilde{\beta}(T),
\end{array}
\right.
\]
where $\tilde{z}(t):=z(t)-\bar{z}(t)$,
\begin{align*}
\tilde{\alpha}(t)  &  :=-f_{x}(t)\xi(t)-f_{y}(t)\eta
(t)+f(t,x(t),y(t),u(t))-f(t,\bar{x}(t),\bar{y}(t),\bar{u}(t)),\\
\tilde{\beta}(T)  &  :=-\phi_{x}(\bar{x}(T))\xi(T)+\phi(x(T))-\phi(\bar
{x}(T)).
\end{align*}
For simplicity, set
\[
H(t):=H(\bar{x}(t),\bar{y}(t),\bar{u}(t),p(t),q(t),t).
\]
The definitions of $H_{x}(t)$, $H_{y}(t)$ and $H_{u}(t)$ are similar. Applying
It\^{o}'s lemma to $\langle\xi(t),m(t)p(t)\rangle-\eta(t)m(t)$ under $P^{\ast
}$, we can derive%
\[%
\begin{array}
[c]{l}%
E_{P^{\ast}}[\langle\xi(T),m(T)p(T)\rangle-\langle\xi(0),m(0)p(0)\rangle
-\eta(T)m(T)+\eta(0)m(0)]\\
=E_{P^{\ast}}[\int\nolimits_{0}^{T}(\langle m(t)p(t),\alpha(t)\rangle+\langle
m(t)q^{j}(t),\beta^{i}(t)\rangle\gamma^{ij}(t)+m(t)\tilde{\alpha}%
(t))dt-\int\nolimits_{0}^{T}m(t)dK(t)]\\
=E_{P^{\ast}}[\int\nolimits_{0}^{T}(-H_{x}(t)\xi(t)-H_{y}(t)\eta
(t)+H(x(t),y(t),u(t),p(t),q(t),t)-H(t))m(t)dt-\int\nolimits_{0}^{T}%
m(t)dK(t)]\\
\geq E_{P^{\ast}}\int\nolimits_{0}^{T}[-H_{x}(t)\xi(t)-H_{y}(t)\eta
(t)-H_{u}(t)(u(t)-\bar{u}(t))+H(x(t),y(t),u(t),p(t),q(t),t)-H(t)]m(t)dt.
\end{array}
\]
The last inequality is due to the assumption and $-m(t)dK(t)\geq0$. Note that
$H$ is convex with respect to $x$, $y$, $u$. We have%
\[
-H_{x}(t)\xi(t)-H_{y}(t)\eta(t)-H_{u}(t)(u(t)-\bar{u}(t))\geq
H(t)-H(x(t),y(t),u(t),p(t),q(t),m(t),t).
\]
It yields that%
\[
E_{P^{\ast}}[\langle\xi(T),m(T)p(T)\rangle-\langle\xi(0),m(0)p(0)\rangle
-\eta(T)m(T)+\eta(0)m(0)]\geq0,
\]
which leads to $E_{P^{\ast}}[-\tilde{\beta}(T)m(T)+\eta(0)]\geq0$. Since
$\phi$ is convex with respect to $x$, we have that $\tilde{\beta}(T)\geq0$.
Thus, $\eta(0)\geq0$, which implies that $\bar{u}(\cdot)$ is an optimal
control. This completes the proof.\bigskip
\end{proof}

\section{The general case}

In this section, we consider the general state equations.\bigskip

\subsection{$f$ includes $z$ term}

Now we study the case in which the generator $f$ of (\ref{state-2}) includes
the term $z$ and we use the notations in Section 4. For simplicity, we assume
that $f$ only contains the term $z$, the other terms can be analyzed similarly
as\ in Section 4. Similar to the proof of Theorem
\textbf{\ref{variational derivative-y}}, we can get%
\[%
\begin{array}
[c]{rl}%
\bar{K}(t)+y_{\rho}(t)-\bar{y}(t)= & \bar{K}(T)+\phi_{x}(\bar{x}(T))\rho
\hat{x}(T)+J_{1}+\int_{t}^{T}A_{\rho}(s)(z_{\rho}(s)-\bar{z}(s))^{T}ds\\
& -\int_{t}^{T}(z_{\rho}(s)-\bar{z}(s))dB(s)-(K_{\rho}(T)-K_{\rho}(t)),
\end{array}
\]
where $J_{1}$ is the same as in Section 4 and $A_{\rho}(s):=\int_{0}^{1}%
f_{z}(\bar{z}(s)+\lambda(z_{\rho}(s)-\bar{z}(s)))d\lambda$. Following
\cite{HJPS1}, we construct an auxiliary extended $\tilde{G}$-expectation space
$(\tilde{\Omega},L_{\tilde{G}}^{1}(\tilde{\Omega}),\mathbb{\hat{E}}^{\tilde
{G}})$ with $\tilde{\Omega}=C_{0}([0,\infty),\mathbb{R}^{2d})$ and%

\[
\tilde{G}(A)=\frac{1}{2}\sup_{\gamma\in\Gamma}\mathrm{tr}\left[  A\left[
\begin{array}
[c]{cc}%
\gamma\gamma^{T} & I\\
I & (\gamma\gamma^{T})^{-1}%
\end{array}
\right]  \right]  ,\ A\in\mathbb{S}_{2d}.
\]
Let $(B(t),\tilde{B}(t))_{t\geq0}$ be the canonical process in the extended
space. It is easy to check that $\langle B^{i},\tilde{B}^{j}\rangle
(t)=\delta_{ij}t$. Consider the equation%
\[
dm_{\rho}(t)=A_{\rho}(t)m_{\rho}(t)d\tilde{B}(t),\text{ }m_{\rho}(0)=1.
\]
Applying It\^{o}'s formula to $m_{\rho}(t)(\bar{K}(t)+y_{\rho}(t)-\bar{y}%
(t))$, we can get
\begin{equation}%
\begin{array}
[c]{rl}%
y_{\rho}(0)-\bar{y}(0)= & \mathbb{\hat{E}}^{\tilde{G}}\mathbb{[}(\bar
{K}(T)+\phi_{x}(\bar{x}(T))\rho\hat{x}(T)+J_{1})m_{\rho}(T)].
\end{array}
\label{new-z-term}%
\end{equation}
Note that%
\[
\bar{K}(T)m_{\rho}(T)=\int_{0}^{T}A_{\rho}(s)\bar{K}(s)m_{\rho}(s)d\tilde
{B}(s)+\int_{0}^{T}m_{\rho}(s)d\bar{K}(s),
\]
then (\ref{new-z-term}) becomes%
\[%
\begin{array}
[c]{rl}%
y_{\rho}(0)-\bar{y}(0) & =\mathbb{\hat{E}}^{\tilde{G}}\mathbb{[}(\phi_{x}%
(\bar{x}(T))\rho\hat{x}(T)+J_{1})m_{\rho}(T)+\int_{0}^{T}m_{\rho}(s)d\bar
{K}(s)]\\
& =\mathbb{\hat{E}}^{\tilde{G}}\mathbb{[}(\phi_{x}(\bar{x}(T))\rho\hat
{x}(T)m(T)+\int_{0}^{T}m_{\rho}(s)d\bar{K}(s)+J_{1}m_{\rho}(T)+J_{2}],
\end{array}
\]
where $J_{2}=\phi_{x}(\bar{x}(T))\rho\hat{x}(T)(m_{\rho}(T)-m(T))$ and%
\[
dm(t)=f_{z}(t)m(t)d\tilde{B}(t),\text{ }m(0)=1.
\]
Similar to the proof of Lemma \ref{variational derivative-backward}, we can
obtain $\mathbb{\hat{E}}^{\tilde{G}}\mathbb{[}\mid J_{1}m_{\rho}%
(T)\mid]=o(\rho)$. By Proposition 3.8 in \cite{HJPS}, we can get%
\[
\mathbb{\hat{E}}^{\tilde{G}}\mathbb{[}\int_{0}^{T}|z_{\rho}(s)-\bar{z}%
(s)|^{2}ds]\leq C\rho.
\]
Then similar to the proof of Proposition \ref{variational derivative-forward},
we can easily obtain $\mathbb{\hat{E}}^{\tilde{G}}[|\hat{x}(T)(m_{\rho
}(T)-m(T))|]=o(1)$. Thus we get%
\[%
\begin{array}
[c]{rl}%
\frac{y_{\rho}(0)-\bar{y}(0)}{\rho}= & \mathbb{\hat{E}}^{\tilde{G}}%
\mathbb{[}\frac{\int_{0}^{T}m_{\rho}(s)d\bar{K}(s)}{\rho}+\phi_{x}(\bar
{x}(T))\hat{x}(T)m(T)]+o(1).
\end{array}
\]
We can choose a sequence $\rho_{k}\downarrow0$ such that $\tilde{P}^{k,u}%
\in\mathcal{\tilde{P}}$ converges weakly to $\tilde{P}^{u}\in\mathcal{\tilde
{P}}$ and
\[
\lim_{k\rightarrow\infty}\frac{y_{\rho_{k}}(0)-\bar{y}(0)}{\rho_{k}}%
=\limsup_{\rho\rightarrow0}\frac{y_{\rho}(0)-\bar{y}(0)}{\rho},
\]%
\[
\mathbb{\hat{E}}^{\tilde{G}}\mathbb{[}\frac{\int_{0}^{T}m_{\rho_{k}}%
(s)d\bar{K}(s)}{\rho_{k}}+\phi_{x}(\bar{x}(T))\hat{x}(T)m(T)]=E_{\tilde
{P}^{k,u}}\mathbb{[}\frac{\int_{0}^{T}m_{\rho_{k}}(s)d\bar{K}(s)}{\rho_{k}%
}+\phi_{x}(\bar{x}(T))\hat{x}(T)m(T)],
\]
where $\mathcal{\tilde{P}}$ represents $\mathbb{\hat{E}}^{\tilde{G}%
}\mathbb{[\cdot]}$. It is easy to check that $E_{\tilde{P}^{k,u}}%
\mathbb{[}\int_{0}^{T}m_{\rho_{k}}(s)d\bar{K}(s)]\rightarrow0$ as
$k\rightarrow\infty$. Note that
\[
\mathbb{\hat{E}}^{\tilde{G}}\mathbb{[}|\int_{0}^{T}(m_{\rho}(s)-m(s))d\bar
{K}(s)|]\rightarrow0\text{ as }\rho\rightarrow0,
\]
then we can get $E_{\tilde{P}^{k,u}}\mathbb{[}\int_{0}^{T}m(s)d\bar
{K}(s)]\rightarrow0$ as $k\rightarrow\infty$. Similar to the proof of Theorem
\textbf{\ref{variational derivative-y}}, we can get $\tilde{P}^{u}%
\in\mathcal{\tilde{P}}^{\ast}=\{ \tilde{P}\in\mathcal{\tilde{P}}:E_{\tilde{P}%
}[\bar{K}(T)]=0\}$ and
\[
\sup_{\tilde{P}\in\mathcal{\tilde{P}}^{\ast}}E_{\tilde{P}}\mathbb{[}\phi
_{x}(\bar{x}(T))\hat{x}(T)m(T)]\leq\liminf_{\rho\rightarrow0}\frac{y_{\rho
}(0)-\bar{y}(0)}{\rho}\leq\limsup_{\rho\rightarrow0}\frac{y_{\rho}(0)-\bar
{y}(0)}{\rho}\leq E_{\tilde{P}^{u}}\mathbb{[}\phi_{x}(\bar{x}(T))\hat
{x}(T)m(T)],
\]
which implies
\[
\lim_{\rho\rightarrow0}\frac{y_{\rho}(0)-\bar{y}(0)}{\rho}=E_{\tilde{P}^{u}%
}\mathbb{[}\phi_{x}(\bar{x}(T))\hat{x}(T)m(T)].
\]
Similar to the proof of Theorem \ref{variational ineq}, there exists a
$\tilde{P}^{\ast}\in\mathcal{\tilde{P}}^{\ast}$such that
\[
\underset{u\in\mathcal{U}[0,T]}{\inf}E_{\tilde{P}^{\ast}}[\phi_{x}(\bar
{x}(T))\hat{x}(T)m(T)]\geq0.
\]
Now we introduce the following adjoint equation under $\tilde{P}^{\ast}$:%
\begin{equation}
\left\{
\begin{array}
[c]{rl}%
-d\tilde{p}(t)= & \{[(b_{x}(t))^{T}+f_{z_{i}}(t)(\sigma_{x}^{i}(t))^{T}%
]\tilde{p}(t)+f_{z_{i}}(t)\tilde{q}^{1,i}(t)\}dt\\
& +(\sigma_{x}^{i}(t))^{T}\tilde{q}^{1,j}(t)d\langle B^{i},B^{j}%
\rangle(t)+f_{z_{j}}(t)[\tilde{q}^{2,i}(t)-f_{z_{i}}(t)\tilde{p}%
(t)]d\langle\tilde{B}^{i},\tilde{B}^{j}\rangle(t)\\
& -\tilde{q}^{1,i}(t)dB^{i}(t)-[\tilde{q}^{2,i}(t)-f_{z_{i}}(t)\tilde
{p}(t)]d\tilde{B}^{i}(t)-d\tilde{N}(t),\\
\tilde{p}(T)= & (\phi_{x}(\bar{x}(T)))^{T}.
\end{array}
\right.  \label{new-adjoint}%
\end{equation}
Set $\mathcal{F}=\sigma(B_{t}:t\geq0)$ and $P^{\ast}=\tilde{P}^{\ast}%
\mid_{\mathcal{F}}$. We first show that $(\tilde{p}(\cdot),\tilde{q}^{1}%
(\cdot),\tilde{N}(\cdot))\in M_{P^{\ast}}^{2}(0,T;\mathbb{R}^{n})\times
M_{P^{\ast}}^{2}(0,T;\mathbb{R}^{n\times d})\times\mathcal{M}_{P^{\ast}%
}^{2,\perp}(0,T;\mathbb{R}^{n})$. For this we consider the following BSDE
under $(\Omega,\mathcal{F},P^{\ast})$:%
\begin{equation}
\left\{
\begin{array}
[c]{rl}%
-dp(t)= & \{[(b_{x}(t))^{T}+f_{z_{i}}(t)(\sigma_{x}^{i}(t))^{T}]p(t)+f_{z_{i}%
}(t)q^{i}(t)\}dt\\
& +(\sigma_{x}^{i}(t))^{T}q^{j}(t)d\langle B^{i},B^{j}\rangle(t)-q^{i}%
(t)dB^{i}(t)-dN(t),\\
p(T)= & (\phi_{x}(\bar{x}(T)))^{T}.
\end{array}
\right.  \label{neq-adjoint-11}%
\end{equation}
By \cite{EM, BLRT}, the above BSDE has a unique solution $(p(\cdot
),q(\cdot),N(\cdot))\in M_{P^{\ast}}^{2}(0,T;\mathbb{R}^{n})\times M_{P^{\ast
}}^{2}(0,T;\mathbb{R}^{n\times d})\times\mathcal{M}_{P^{\ast}}^{2,\perp
}(0,T;\mathbb{R}^{n})$. It is easy to check that%
\begin{equation}
(\tilde{p}(\cdot),\tilde{q}^{1}(\cdot),\tilde{q}^{2}(\cdot),\tilde{N}%
(\cdot))=(p(\cdot),q(\cdot),p(\cdot)f_{z}(\cdot),N(\cdot)) \label{relation11}%
\end{equation}
is the unique solution of the adjoint equation (\ref{new-adjoint}). Applying
It\^{o}'s formula to $\langle\hat{x}(t),m(t)\tilde{p}(t)\rangle$ under
$\tilde{P}^{\ast}$ and relation (\ref{relation11}), we can get%
\[%
\begin{array}
[c]{l}%
E_{\tilde{P}^{\ast}}[\phi_{x}(\bar{x}(T))\hat{x}(T)m(T)]\\
=E_{\tilde{P}^{\ast}}[\int_{0}^{T}(\langle m(t)p(t),b_{u}(t)(u(t)-\bar
{u}(t))\rangle+\langle m(t)f_{z_{i}}(t)p(t),\sigma_{u}^{i}(t)(u(t)-\bar
{u}(t))\rangle\\
\ \ +\langle m(t)q^{j}(t),\sigma_{u}^{i}(t)(u(t)-\bar{u}(t))\rangle\gamma
^{ij}(t))dt].
\end{array}
\]
We define the Hamiltonian $H:\mathbb{R}^{n}\times\mathbb{R}\times
\mathbb{R}^{1\times d}\times\mathbb{R}^{m}\times\mathbb{R}^{n}\times
\mathbb{R}^{n\times d}\times\lbrack0,T]\rightarrow\mathbb{R}$ as follows:%
\[
H(x,z,u,p,q,t)=\langle p,b(t,x,u)\rangle+\langle f_{z_{i}}(z)p,\sigma
^{i}(t,x,u)\rangle+\langle q^{j},\sigma^{i}(t,x,u)\rangle\gamma^{ij}(t)+f(z).
\]
Then%
\[
E_{\tilde{P}^{\ast}}[\phi_{x}(\bar{x}(T))\hat{x}(T)m(T)]=E_{\tilde{P}^{\ast}%
}[\int_{0}^{T}m(t)H_{u}(\bar{x}(t),\bar{z}(t),\bar{u}%
(t),p(t),q(t),t)(u(t)-\bar{u}(t))dt].
\]
Thus%
\[
H_{u}(\bar{x}(t),\bar{z}(t),\bar{u}(t),p(t),q(t),t)(u-\bar{u}(t))\geq0,\;
\forall u\in U,\;a.e.,\; \tilde{P}^{\ast}-a.s..
\]
Note that all the terms in the above inequality are measurable with respect to
$\mathcal{F}$, then we get%
\begin{equation}
H_{u}(\bar{x}(t),\bar{z}(t),\bar{u}(t),p(t),q(t),t)(u-\bar{u}(t))\geq0,\;
\forall u\in U,\;a.e.,\;P^{\ast}-a.s.. \label{max-pri}%
\end{equation}
We summarize the above analysis to the following theorem.

\begin{theorem}
Suppose (H1)-(H3) hold and $f$ only depends on the term $z$. Let $\bar
{u}(\cdot)$ be an optimal control and $(\bar{x}(\cdot),\bar{y}(\cdot),\bar
{z}(\cdot),\bar{K}(\cdot))$ be the corresponding trajectory. Then there exist
a $P^{\ast}\in\mathcal{P}^{\ast}$ and $(p(\cdot),q(\cdot),N(\cdot))\in
M_{P^{\ast}}^{2}(0,T;\mathbb{R}^{n})\times M_{P^{\ast}}^{2}(0,T;\mathbb{R}%
^{n\times d})\times\mathcal{M}_{P^{\ast}}^{2,\perp}(0,T;\mathbb{R}^{n})$,
which is the solution of the adjoint equation (\ref{neq-adjoint-11}), such
that the inequality (\ref{max-pri}) holds.
\end{theorem}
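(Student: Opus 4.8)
The plan is to reduce to the $z$-free situation of Section~4 by absorbing the $z$-dependence of $f$ into a linear stochastic weight, at the cost of enlarging the underlying $G$-expectation space. First I would construct the extended $\tilde G$-expectation space $(\tilde\Omega,L^{1}_{\tilde G}(\tilde\Omega),\mathbb{\hat{E}}^{\tilde{G}})$ with canonical process $(B(t),\tilde B(t))_{t\ge0}$ and $\langle B^{i},\tilde B^{j}\rangle(t)=\delta_{ij}t$, following \cite{HJPS1}. On this space I would solve $dm_{\rho}(t)=A_{\rho}(t)m_{\rho}(t)\,d\tilde B(t)$, $m_{\rho}(0)=1$, with $A_{\rho}(s)=\int_{0}^{1}f_{z}(\bar z(s)+\lambda(z_{\rho}(s)-\bar z(s)))\,d\lambda$, together with the limiting equation $dm(t)=f_{z}(t)m(t)\,d\tilde B(t)$, $m(0)=1$. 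Applying It\^{o}'s formula to $m_{\rho}(t)(\bar K(t)+y_{\rho}(t)-\bar y(t))$, the drift term $\int_{t}^{T}A_{\rho}(s)(z_{\rho}(s)-\bar z(s))^{T}\,ds$ and the $dB$-integral cancel, and after splitting $\bar K(T)m_{\rho}(T)$ into a $d\tilde B$-martingale part and $\int_{0}^{T}m_{\rho}(s)\,d\bar K(s)$ one reaches $y_{\rho}(0)-\bar y(0)=\mathbb{\hat{E}}^{\tilde{G}}[\phi_{x}(\bar x(T))\rho\hat x(T)m(T)+\int_{0}^{T}m_{\rho}(s)\,d\bar K(s)+J_{1}m_{\rho}(T)+J_{2}]$ with $J_{2}=\phi_{x}(\bar x(T))\rho\hat x(T)(m_{\rho}(T)-m(T))$.

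Second, I would run the convergence analysis. Using $\mathbb{\hat{E}}^{\tilde{G}}[\int_{0}^{T}|z_{\rho}(s)-\bar z(s)|^{2}\,ds]\le C\rho$ (Proposition~3.8 in \cite{HJPS}), the forward variational estimates of Proposition~\ref{variational derivative-forward} transported to the extended space, and moment bounds on the exponential-type processes $m_{\rho},m$, I would show $\mathbb{\hat{E}}^{\tilde{G}}[|J_{1}m_{\rho}(T)|]=o(\rho)$ and $\mathbb{\hat{E}}^{\tilde{G}}[|\hat x(T)(m_{\rho}(T)-m(T))|]=o(1)$, hence $\rho^{-1}(y_{\rho}(0)-\bar y(0))=\mathbb{\hat{E}}^{\tilde{G}}[\rho^{-1}\int_{0}^{T}m_{\rho}(s)\,d\bar K(s)+\phi_{x}(\bar x(T))\hat x(T)m(T)]+o(1)$. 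Then, exactly as in Theorem~\ref{variational derivative-y}, I would extract $\rho_{k}\downarrow0$ and $\tilde P^{k,u}\to\tilde P^{u}$ weakly in $\mathcal{\tilde{P}}$ attaining the sublinear expectation, use that $\rho^{-1}\int_{0}^{T}m_{\rho}(s)\,d\bar K(s)\le0$ and decreases as $\rho\downarrow0$ to show the limit exists, verify $\tilde P^{u}\in\mathcal{\tilde{P}}^{\ast}=\{\tilde P\in\mathcal{\tilde{P}}:E_{\tilde P}[\bar K(T)]=0\}$ (from $E_{\tilde P^{k,u}}[\int_{0}^{T}m(s)\,d\bar K(s)]\to0$ and $\mathbb{\hat{E}}^{\tilde{G}}[|\int_{0}^{T}(m_{\rho}-m)\,d\bar K|]\to0$), and conclude $\lim_{\rho\to0}\rho^{-1}(y_{\rho}(0)-\bar y(0))=E_{\tilde P^{u}}[\phi_{x}(\bar x(T))\hat x(T)m(T)]=\sup_{\tilde P\in\mathcal{\tilde{P}}^{\ast}}E_{\tilde P}[\phi_{x}(\bar x(T))\hat x(T)m(T)]$. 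Optimality of $\bar u$ makes this quantity nonnegative; since $u\mapsto\phi_{x}(\bar x(T))\hat x(T)m(T)$ is affine in $u$ and $\mathcal{\tilde{P}}^{\ast}$ is convex and weakly compact, Sion's minimax theorem yields $\tilde P^{\ast}\in\mathcal{\tilde{P}}^{\ast}$ with $\inf_{u\in\mathcal{U}[0,T]}E_{\tilde P^{\ast}}[\phi_{x}(\bar x(T))\hat x(T)m(T)]\ge0$.

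Third, I would introduce the adjoint equation (\ref{new-adjoint}) under $\tilde P^{\ast}$ and identify its solution, via $(\tilde p,\tilde q^{1},\tilde q^{2},\tilde N)=(p,q,pf_{z},N)$ as in (\ref{relation11}), with the solution of the lower-dimensional BSDE (\ref{neq-adjoint-11}) under $P^{\ast}=\tilde P^{\ast}|_{\mathcal F}$; existence and uniqueness for (\ref{neq-adjoint-11}) come from \cite{EM,BLRT}, and checking that the stated relation solves (\ref{new-adjoint}) is a direct computation using $\langle B^{i},\tilde B^{j}\rangle(t)=\delta_{ij}t$ and $dm=f_{z}m\,d\tilde B$. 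Applying It\^{o}'s formula to $\langle\hat x(t),m(t)\tilde p(t)\rangle$ under $\tilde P^{\ast}$ and substituting (\ref{relation11}) rewrites $E_{\tilde P^{\ast}}[\phi_{x}(\bar x(T))\hat x(T)m(T)]$ as $E_{\tilde P^{\ast}}[\int_{0}^{T}m(t)H_{u}(\bar x(t),\bar z(t),\bar u(t),p(t),q(t),t)(u(t)-\bar u(t))\,dt]$ with $H$ as defined in the text. Nonnegativity of this integral for every admissible $u$, together with $m(t)>0$ and a standard variation of $u$ about $\bar u$ on small sets, gives $H_{u}(\bar x(t),\bar z(t),\bar u(t),p(t),q(t),t)(u-\bar u(t))\ge0$ for all $u\in U$, a.e., $\tilde P^{\ast}$-a.s.; since every term is $\mathcal F$-measurable this also holds $P^{\ast}$-a.s., which is (\ref{max-pri}).

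The hard part will be the convergence analysis on the extended space, namely controlling $m_{\rho}(T)-m(T)$ and $J_{1}m_{\rho}(T)$ in $L^{1}_{\tilde G}$ when $z_{\rho}-\bar z$ is only controlled in $M^{2}_{\tilde G}$ at rate $\sqrt\rho$. This forces one to combine the quadratic $G$-BSDE estimate with uniform moment bounds on $m_{\rho}$ and $m$ and with the uniform-integrability lemma (Lemma~\ref{uniform property}), mirroring but genuinely extending the truncation estimates of Lemma~\ref{variational derivative-backward}. Once those limits are secured, the minimax step and the adjoint-equation bookkeeping proceed essentially as in Section~4.
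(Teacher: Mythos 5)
Your proposal follows the paper's own argument in subsection 5.1 essentially step by step: the extended $\tilde G$-expectation space with weight $m_{\rho}$, It\^{o}'s formula applied to $m_{\rho}(t)(\bar K(t)+y_{\rho}(t)-\bar y(t))$, the estimates $\mathbb{\hat E}^{\tilde G}[|J_{1}m_{\rho}(T)|]=o(\rho)$ and $\mathbb{\hat E}^{\tilde G}[|\hat x(T)(m_{\rho}(T)-m(T))|]=o(1)$ via $\mathbb{\hat E}^{\tilde G}[\int_{0}^{T}|z_{\rho}-\bar z|^{2}ds]\le C\rho$, the weak-compactness and minimax steps, and the identification $(\tilde p,\tilde q^{1},\tilde q^{2},\tilde N)=(p,q,pf_{z},N)$ reducing (\ref{new-adjoint}) to (\ref{neq-adjoint-11}) under $P^{\ast}=\tilde P^{\ast}|_{\mathcal F}$. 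The one slip is the claim that $\rho^{-1}\int_{0}^{T}m_{\rho}(s)\,d\bar K(s)$ decreases as $\rho\downarrow0$: that monotonicity was available in Section 4 only because the weight $m$ there does not depend on $\rho$, whereas here $m_{\rho}$ does; the paper instead picks $\rho_{k}$ realizing the limsup of the difference quotient and obtains existence of the limit from the two-sided bound $\sup_{\tilde P\in\mathcal{\tilde P}^{\ast}}E_{\tilde P}[\phi_{x}(\bar x(T))\hat x(T)m(T)]\le\liminf\le\limsup\le E_{\tilde P^{u}}[\phi_{x}(\bar x(T))\hat x(T)m(T)]$ — a repair that the sandwich you state at the end of your second step already supplies, so the overall argument stands.
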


\bigskip

\subsection{The general maximum principle}

In this subsection, we study the general case, i.e. the state equations are
governed by (\ref{state-1}) and (\ref{state-2}). We only list the main results
since the proofs are similar as in section 4 and subsection 5.1.

For this case, we introduce the following variational equation:%

\[
\left\{
\begin{array}
[c]{rl}%
d\hat{x}(t)= & [b_{x}(t)\hat{x}(t)+b_{u}(t)(u(t)-\bar{u}(t))]dt+[h_{x}%
^{ij}(t)\hat{x}(t)+h_{u}^{ij}(t)(u(t)-\bar{u}(t))]d\langle B^{i},B^{j}%
\rangle(t)\\
& +[\sigma_{x}^{i}(t)\hat{x}(t)+\sigma_{u}^{i}(t)(u(t)-\bar{u}(t))]dB^{i}%
(t),\\
\hat{x}(0)= & 0.
\end{array}
\right.
\]

Similarly, for some $P^{\ast}\in\mathcal{P}^{\ast}$, the following adjoint
equation has a unique solution $(p(\cdot),q(\cdot),N(\cdot))\in M_{P^{\ast}%
}^{2}(0,T;\mathbb{R}^{n})\times M_{P^{\ast}}^{2}(0,T;\mathbb{R}^{n\times
d})\times\mathcal{M}_{P^{\ast}}^{2,\perp}(0,T;\mathbb{R}^{n})$.
\begin{equation}
\left\{
\begin{array}
[c]{rl}%
-dp(t)= & \{(f_{x}(t))^{T}+[(b_{x}(t))^{T}+f_{z_{i}}(t)(\sigma_{x}^{i}%
(t))^{T}+f_{y}(t)]p(t)+f_{z_{i}}(t)q^{l}(t)\}dt\\
& +\{(g_{x}^{ij}(t))^{T}+[(h_{x}^{ij}(t))^{T}+g_{z_{l}}^{ij}(t)(\sigma_{x}%
^{l}(t))^{T}+g_{y}^{ij}(t)]p(t)+g_{z_{l}}^{ij}(t)q^{l}(t)\\
& (\sigma_{x}^{i}(t))^{T}q^{j}(t)\}d\langle B^{i},B^{j}\rangle(t)-q^{i}%
(t)dB^{i}(t)-dN(t),\\
p(T)= & (\phi_{x}(\bar{x}(T)))^{T}.
\end{array}
\right.  \label{neq-adjoint-11-11}%
\end{equation}

Define the Hamiltonian $H:\mathbb{R}^{n}\times\mathbb{R}\times\mathbb{R}%
^{1\times d}\times\mathbb{R}^{m}\times\mathbb{R}^{m}\times\mathbb{R}^{n}%
\times\mathbb{R}^{n\times d}\times\lbrack0,T]\rightarrow\mathbb{R}$ as
follows:%
\[%
\begin{array}
[c]{rl}%
H(x,y,z,u,v,p,q,t)= & \langle p,b(t,x,u)\rangle+\langle p,h^{ij}%
(t,x,u)\rangle\gamma^{ij}(t)+\langle q^{j},\sigma^{i}(t,x,u)\rangle\gamma
^{ij}(t)+\langle p(f_{z_{l}}(t,x,y,z,v)\\
& +g_{z_{l}}^{ij}(t,x,y,z,v)),\sigma^{l}(t,x,u)\rangle\gamma^{ij}%
(t)+f(t,x,y,z,u)+g^{ij}(t,x,y,z,u)\gamma^{ij}(t),
\end{array}
\]
where $i$, $j$, $l=1,\ldots,d$.

\begin{theorem}
Suppose (H1)-(H3) hold. Let $\bar{u}(\cdot)$ be an optimal control and
$(\bar{x}(\cdot),\bar{y}(\cdot),\bar{z}(\cdot),\bar{K}(\cdot))$ be the
corresponding trajectory. Then there exist a $P^{\ast}\in\mathcal{P}^{\ast}$
and $(p(\cdot),q(\cdot),N(\cdot))\in M_{P^{\ast}}^{2}(0,T;\mathbb{R}%
^{n})\times M_{P^{\ast}}^{2}(0,T;\mathbb{R}^{n\times d})\times\mathcal{M}%
_{P^{\ast}}^{2,\perp}(0,T;\mathbb{R}^{n})$, which is the solution of the
adjoint equation (\ref{neq-adjoint-11-11}), such that%
\begin{equation}
H_{u}(\bar{x}(t),\bar{y}(t),\bar{z}(t),\bar{u}(t),\bar{u}%
(t),p(t),q(t),t)(u-\bar{u}(t))\geq0,\; \forall u\in U,\;a.e.,\;P^{\ast}-a.s..
\label{max-pri-11}%
\end{equation}

\end{theorem}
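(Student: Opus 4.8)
The plan is to reproduce, in the general setting, the three-step argument of Section~4 --- variational derivative of the value function $\rho\mapsto y_\rho(0)$, variational inequality via a minimax argument, and the pointwise maximum condition obtained from the adjoint equation --- but carried out on the extended $\tilde{G}$-expectation space of Subsection~5.1 so as to absorb the $z$-dependence of both $f$ and $g^{ij}$, while the extra $d\langle B^i,B^j\rangle$ terms carrying $h^{ij}$ and $g^{ij}$ are handled exactly like the $dt$ and $dB$ terms.

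\textbf{Step 1 (first-order expansion).} With $\hat x$ the solution of the variational equation displayed above and $\tilde x_\rho(t):=\rho^{-1}[x_\rho(t)-\bar x(t)]-\hat x(t)$, the proof of Proposition~\ref{variational derivative-forward} applies verbatim (It\^o's formula for $|\tilde x_\rho|^2$, Gronwall, and Lemma~\ref{uniform property}), giving $\sup_{0\le t\le T}\mathbb{\hat{E}}[|\tilde x_\rho(t)|^2]\to0$. On $(\tilde\Omega,L^1_{\tilde G}(\tilde\Omega),\mathbb{\hat{E}}^{\tilde{G}})$ I introduce the weight $m(\cdot)$ solving
\[
dm(t)=m(t)f_y(t)\,dt+m(t)g^{ij}_y(t)\,d\langle B^i,B^j\rangle(t)+m(t)\bigl(f_{z_l}(t)+g^{ij}_{z_l}(t)\gamma^{ij}(t)\bigr)\,d\tilde B^l(t),\quad m(0)=1,
\]
the coefficients being chosen to annihilate all linear $x,y,z$ contributions; the $d\tilde B$ term converts, via $\langle B^i,\tilde B^j\rangle(t)=\delta_{ij}t$, the $\int_t^T A_\rho(s)(z_\rho(s)-\bar z(s))^{T}ds$-type term into a stochastic integral as in Subsection~5.1. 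Applying It\^o's formula to $m(t)(\bar K(t)+y_\rho(t)-\bar y(t))$ and using $\bar K(T)m(T)=\int_0^T m(s)\,d\bar K(s)+(\text{terms that }m\text{ kills})$ yields
\[
\frac{y_\rho(0)-\bar y(0)}{\rho}=\mathbb{\hat{E}}^{\tilde{G}}\!\left[\frac{\int_0^T m(s)\,d\bar K(s)}{\rho}+\Theta^u\right]+o(1),
\]
where $\Theta^u$ collects the surviving first-order terms $\phi_x(\bar x(T))\hat x(T)m(T)+\int_0^T[f_x(s)\hat x(s)+f_u(s)(u(s)-\bar u(s))+g^{ij}_x(s)\hat x(s)\gamma^{ij}(s)+\cdots]m(s)\,ds$. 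The $o(\rho)$ bounds on the $J_1$- and $J_2$-type remainders, with $\mathbb{\hat{E}}^{\tilde{G}}[\int_0^T|z_\rho(s)-\bar z(s)|^2ds]\le C\rho$ (Proposition~3.8 of \cite{HJPS}), are those of Lemma~\ref{variational derivative-backward} and Subsection~5.1.

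\textbf{Steps 2--3 (variational inequality and adjoint equation).} Since $\int_0^T m(s)\,d\bar K(s)\le0$ and $\rho^{-1}\int_0^T m(s)\,d\bar K(s)$ is monotone in $\rho$, the argument of Theorem~\ref{variational derivative-y} gives the existence of $\lim_{\rho\to0}\rho^{-1}(y_\rho(0)-\bar y(0))$ and, by weak compactness of $\mathcal{\tilde P}$ and Proposition~\ref{npro-2.8}, a measure $\tilde P^u\in\mathcal{\tilde P}^{\ast}=\{\tilde P\in\mathcal{\tilde P}:E_{\tilde P}[\bar K(T)]=0\}$ with $\lim_{\rho\to0}\rho^{-1}(y_\rho(0)-\bar y(0))=E_{\tilde P^u}[\Theta^u]=\sup_{\tilde P\in\mathcal{\tilde P}^{\ast}}E_{\tilde P}[\Theta^u]\ge0$. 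As $u\mapsto\Theta^u$ is affine and $\mathcal{\tilde P}^{\ast}$ is convex and weakly compact, Sion's minimax theorem together with a weak-limit extraction of near-optimal measures (as in Theorem~\ref{variational ineq}) produces one $\tilde P^{\ast}\in\mathcal{\tilde P}^{\ast}$ with $\inf_{u\in\mathcal U[0,T]}E_{\tilde P^{\ast}}[\Theta^u]\ge0$. I then introduce the adjoint equation (\ref{neq-adjoint-11-11}) under $P^{\ast}:=\tilde P^{\ast}|_{\mathcal F}$, verifying its well-posedness through the companion linear BSDE on $(\Omega,\mathcal F,P^{\ast})$ and the explicit identification of $\tilde q^2$ in terms of $p$ (the analogue of (\ref{relation11})), as in Subsection~5.1. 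Applying It\^o's formula to $\langle\hat x(t),m(t)p(t)\rangle$ under $\tilde P^{\ast}$, the $dt$- and $d\langle B^i,B^j\rangle$-coefficients of $(p,q,N)$ have been arranged so that all $\hat x$-linear terms cancel, leaving $E_{\tilde P^{\ast}}[\Theta^u]=E_{\tilde P^{\ast}}[\int_0^T m(t)H_u(\bar x(t),\bar y(t),\bar z(t),\bar u(t),\bar u(t),p(t),q(t),t)(u(t)-\bar u(t))\,dt]$ with $H$ the Hamiltonian defined above (the two $\bar u(t)$ slots reflecting that $H$ separates the $b,h,\sigma$-argument from the $f_z,g^{ij}_z$-argument, and $H_u$ differentiates only the former). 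Since this holds for all $u(\cdot)\in\mathcal U[0,T]$ and $\inf_uE_{P^{\ast}}[\Theta^u]\ge0$, a localization argument as in the derivation of (\ref{MP}) converts the integral inequality into the pointwise condition (\ref{max-pri-11}), which --- being $\mathcal F$-measurable --- holds $P^{\ast}$-a.s.

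I expect the main obstacle to be the bookkeeping of the adjoint equation: choosing the $dt$- and $d\langle B^i,B^j\rangle$-coefficients of $p$ so that the It\^o expansion of $\langle\hat x,mp\rangle$ telescopes once the $\tilde B$-driven weight $m$ and the extra $h^{ij},g^{ij}$ terms are all simultaneously present, and checking that (\ref{neq-adjoint-11-11}) is well-posed on the reduced space $(\Omega,\mathcal F,P^{\ast})$ via the auxiliary BSDE and the explicit form of $\tilde q^2$. Everything else is a direct transcription of Section~4 and Subsection~5.1.
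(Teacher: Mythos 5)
Your overall architecture is the one the paper intends (the paper itself only says the proof is "similar as in section 4 and subsection 5.1"), and most of your bookkeeping is consistent with it: the variational equation with the $h^{ij}$ terms, the reduction of the adjoint equation (\ref{neq-adjoint-11-11}) to an auxiliary linear BSDE on $(\Omega,\mathcal{F},P^{\ast})$ via the analogue of (\ref{relation11}), the two $u$-slots of $H$, and the minimax/weak-compactness extraction of $\tilde{P}^{\ast}$. The genuine gap is in your Step 1, exactly at the point where the general case differs from Section 4: you drive the weight $m$ by the \emph{frozen} coefficients $f_{z_l}(t)+g^{ij}_{z_l}(t)\gamma^{ij}(t)$ (evaluated along $(\bar{x},\bar{y},\bar{z},\bar{u})$) and assert that the remainders are $o(\rho)$ "as in Lemma \ref{variational derivative-backward} and Subsection 5.1". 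The cross-variation of this $m$ with $(z_{\rho}-\bar{z})dB$ cancels $[f_{z_l}(t)+g^{ij}_{z_l}(t)\gamma^{ij}(t)](z_{\rho,l}-\bar{z}_l)\,dt$, whereas the actual increment of the generator is $A_{\rho}(t)(z_{\rho}(t)-\bar{z}(t))^{T}$ with $A_{\rho}$ the derivative integrated along the segment; the residual is of the type $\bar{\omega}(|x_{\rho}-\bar{x}|+|y_{\rho}-\bar{y}|+|z_{\rho}-\bar{z}|+\rho|u-\bar{u}|)\,|z_{\rho}-\bar{z}|$. Unlike $x_{\rho}-\bar{x}$ and $y_{\rho}-\bar{y}$, which are $O(\rho)$ in $L^{2}$, the only available estimate for $z$ is $\mathbb{\hat{E}}^{\tilde{G}}[\int_{0}^{T}|z_{\rho}(s)-\bar{z}(s)|^{2}ds]\leq C\rho$; hence this residual is in general only $O(\rho)$ (when $\bar{\omega}$ is linear it is of the same size as $\int_{0}^{T}|z_{\rho}-\bar{z}|^{2}ds$, which is not known to be $o(\rho)$ because the $K$-parts spoil the usual rate), and after dividing by $\rho$ it leaves a term that need not vanish. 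This is precisely why Subsection 5.1 works with the $\rho$-dependent weight $m_{\rho}$ solving $dm_{\rho}=A_{\rho}m_{\rho}d\tilde{B}$, so that the $z$-terms cancel \emph{exactly}, and replaces $m_{\rho}$ by $m$ only where the substitution error is harmless: $\rho\,\phi_{x}(\bar{x}(T))\hat{x}(T)(m_{\rho}(T)-m(T))=o(\rho)$ and $\mathbb{\hat{E}}^{\tilde{G}}[|\int_{0}^{T}(m_{\rho}(s)-m(s))d\bar{K}(s)|]\rightarrow0$.

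The same correction invalidates your Step 2 as written: once the weight is (necessarily) $\rho$-dependent, $\rho^{-1}\int_{0}^{T}m_{\rho}(s)d\bar{K}(s)$ is no longer monotone in $\rho$, so the Section 4 monotonicity argument for the existence of $\lim_{\rho\rightarrow0}\rho^{-1}(y_{\rho}(0)-\bar{y}(0))$ is unavailable. You must instead use the $\liminf$/$\limsup$ sandwich of Subsection 5.1: choose $\rho_{k}\downarrow0$ realizing the $\limsup$ with $\tilde{P}^{k,u}$ attaining $\mathbb{\hat{E}}^{\tilde{G}}$ and converging weakly to $\tilde{P}^{u}$, show $E_{\tilde{P}^{k,u}}[\int_{0}^{T}m_{\rho_{k}}(s)d\bar{K}(s)]\rightarrow0$ and, via $\mathbb{\hat{E}}^{\tilde{G}}[|\int_{0}^{T}(m_{\rho_{k}}-m)d\bar{K}|]\rightarrow0$, conclude $E_{\tilde{P}^{u}}[\bar{K}(T)]=0$ and $\sup_{\tilde{P}\in\mathcal{\tilde{P}}^{\ast}}E_{\tilde{P}}[\Theta^{u}]\leq\liminf\leq\limsup\leq E_{\tilde{P}^{u}}[\Theta^{u}]$. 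With these two repairs (exact cancellation through $m_{\rho}$, sandwich instead of monotonicity) your argument coincides with the paper's intended proof, and the remaining steps — Sion's minimax as in Theorem \ref{variational ineq}, well-posedness of (\ref{neq-adjoint-11-11}) under $P^{\ast}=\tilde{P}^{\ast}\mid_{\mathcal{F}}$, and the It\^{o} computation for $\langle\hat{x}(t),m(t)p(t)\rangle$ leading to (\ref{max-pri-11}) — go through as you describe.
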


In the following, we give the sufficient condition for optimality.

\begin{theorem}
Suppose (H1)-(H3) hold. Let $\bar{u}(\cdot)\in\mathcal{U}[0,T]$ and $P^{\ast
}\in\mathcal{P}^{\ast}$ satisfy that
\[
H_{u}(\bar{x}(t),\bar{y}(t),\bar{z}(t),\bar{u}(t),\bar{u}%
(t),p(t),q(t),t)(u-\bar{u}(t))\geq0,\; \forall u\in U,\;a.e.,\;P^{\ast}-a.s.,
\]
where $(\bar{x}(\cdot),\bar{y}(\cdot),\bar{z}(\cdot),\bar{K}(\cdot))$ is the
state processes of (\ref{state-1}) and (\ref{state-2}) corresponding to
$\bar{u}(\cdot)$ and $(p(\cdot),q(\cdot),N(\cdot))$ is the solution of the
adjoint equation (\ref{neq-adjoint-11-11}) under $P^{\ast}$. We also assume
that $H$ is convex with respect to $x$, $y$, $z$, $u$ and $\phi$ is convex
with respect to $x$. Then $\bar{u}(\cdot)$ is an optimal control.
\end{theorem}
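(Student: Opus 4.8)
The plan is to mimic the proof of Theorem \ref{the-sufficient} from Section 4, extending the It\^o expansion to accommodate the extra $d\langle B^i,B^j\rangle$ terms in both the forward state equation (\ref{state-1}) and the backward equation (\ref{state-2}), as well as the $z$-dependence in $f$ and $g^{ij}$. Fix an arbitrary $u(\cdot)\in\mathcal{U}[0,T]$ with corresponding state $(x(\cdot),y(\cdot),z(\cdot),K(\cdot))$, and set $\xi(t):=x(t)-\bar x(t)$, $\eta(t):=y(t)-\bar y(t)$, $\tilde z(t):=z(t)-\bar z(t)$. As in Section 4, write the equations for $\xi$ and $\eta$ under $P^{\ast}$ in the form
\[
\begin{array}[c]{rl}
d\xi(t)= & [b_x(t)\xi(t)+\alpha(t)]dt+[h_x^{ij}(t)\xi(t)+\alpha^{ij}(t)]d\langle B^i,B^j\rangle(t)+[\sigma_x^i(t)\xi(t)+\beta^i(t)]dB^i(t),\\
-d\eta(t)= & [f_x(t)\xi(t)+f_y(t)\eta(t)+f_{z_l}(t)\tilde z^l(t)+\tilde\alpha(t)]dt+[\ldots]d\langle B^i,B^j\rangle(t)-\tilde z(t)dB(t)-dK(t),
\end{array}
\]
where $\alpha,\alpha^{ij},\beta^i,\tilde\alpha$ and the bracketed $d\langle B^i,B^j\rangle$ coefficient in the $\eta$-equation are the first-order Taylor remainders (differences between the true increments and their linearizations at $(\bar x,\bar y,\bar z,\bar u)$). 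The terminal condition is $\eta(T)=\phi_x(\bar x(T))\xi(T)+\tilde\beta(T)$ with $\tilde\beta(T)$ the remainder of $\phi$.

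Next I would introduce, exactly as in subsection 5.1, the extended $\tilde G$-expectation space carrying the auxiliary Brownian motion $\tilde B$ with $\langle B^i,\tilde B^j\rangle(t)=\delta_{ij}t$, and the exponential weight process $m(t)$ solving $dm(t)=f_y(t)m(t)dt+[f_{z_l}(t)+g_{z_l}^{ij}(t)\gamma^{ij}(t)]m(t)d\tilde B^l(t)$ with $m(0)=1$ — this is the process that absorbs the $y$- and $z$-dependence of $f$ and $g^{ij}$. Then apply It\^o's formula to $\langle\xi(t),m(t)p(t)\rangle-\eta(t)m(t)$ on the extended space, using the adjoint equation (\ref{neq-adjoint-11-11}) to cancel all the terms in $\xi$ and to turn the $\langle p,b\rangle$, $\langle p,h^{ij}\rangle\gamma^{ij}$, $\langle q^j,\sigma^i\rangle\gamma^{ij}$, $\langle p f_{z_l}+pg_{z_l}^{ij}\gamma^{ij},\sigma^l\rangle$ contributions into $H(x(t),y(t),z(t),u(t),\bar u(t),p(t),q(t),t)-H(t)$ modulo the derivative terms $H_x(t)\xi(t)+H_y(t)\eta(t)+H_u(t)(u(t)-\bar u(t))$. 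The crucial sign inputs are $-m(t)\,dK(t)\geq0$ (since $K$ is a decreasing $G$-martingale and $m>0$) and the hypothesis $H_u(t)(u(t)-\bar u(t))\geq0$ $P^{\ast}$-a.s., together with the convexity of $H$ in $(x,y,z,u)$, which gives
\[
-H_x(t)\xi(t)-H_y(t)\eta(t)-H_u(t)(u(t)-\bar u(t))\ \geq\ H(t)-H(x(t),y(t),z(t),u(t),\bar u(t),p(t),q(t),t).
\]
Combining these, the expectation $E_{\tilde P^{\ast}}[\langle\xi(T),m(T)p(T)\rangle-\eta(T)m(T)]\geq0$, i.e. $E_{\tilde P^{\ast}}[-\tilde\beta(T)m(T)+\eta(0)]\geq0$; convexity of $\phi$ forces $\tilde\beta(T)\geq0$, hence $\eta(0)=y(0)-\bar y(0)\geq0$, i.e. $J(u(\cdot))\geq J(\bar u(\cdot))$, and since $u(\cdot)$ was arbitrary $\bar u(\cdot)$ is optimal.

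The main obstacle I anticipate is purely the bookkeeping: correctly matching every coefficient produced by It\^o's product rule applied to $\langle\xi(t),m(t)p(t)\rangle$ — in particular the cross-variation terms between $dB^i$ and $d\tilde B^l$, and the quadratic-variation terms $d\langle B^i,B^j\rangle(t)=\gamma^{ij}(t)dt$ — against the drift and $d\langle B^i,B^j\rangle$ coefficients deliberately built into the adjoint equation (\ref{neq-adjoint-11-11}), and against the definition of $H$, so that exactly the expression $H(x(t),\ldots,t)-H(t)$ (plus the three derivative terms) survives. A secondary technical point is justifying that $m$, $p$, $q$, $N$ and the remainder terms all have enough integrability under $\tilde P^{\ast}$ (respectively $P^{\ast}$) for It\^o's formula, the stochastic-integral martingale property, and the interchange of expectation and integral to be legitimate; this follows from assumptions (H1)--(H2) (bounded derivatives) and the defining $L^2_{P^{\ast}}$ / $\mathcal M^{2,\perp}_{P^{\ast}}$ membership of the adjoint solution, exactly as in the proof of Theorem \ref{the-sufficient}. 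No genuinely new idea beyond Section 4 and subsection 5.1 is needed.
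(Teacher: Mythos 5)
Your route is the one the paper intends: it gives no separate proof for this theorem, saying only that the argument is the same as for Theorem \ref{the-sufficient} combined with the extended $\tilde G$-space device of subsection 5.1, and your duality computation with $\langle\xi(t),m(t)p(t)\rangle-\eta(t)m(t)$, the sign of $-m\,dK$, and convexity of $H$ in $(x,y,z,u)$ and of $\phi$ is exactly that argument. Two details in your write-up need repair, though. First, the weight must also absorb the $y$-dependence of $g^{ij}$: since the linearized $\eta$-equation carries $g^{ij}_y(t)\eta(t)\,d\langle B^i,B^j\rangle(t)=g^{ij}_y(t)\gamma^{ij}(t)\eta(t)\,dt$, you need $dm(t)=[f_y(t)+g^{ij}_y(t)\gamma^{ij}(t)]m(t)\,dt+[f_{z_l}(t)+g^{ij}_{z_l}(t)\gamma^{ij}(t)]m(t)\,d\tilde B^l(t)$; with the drift $f_y(t)m(t)\,dt$ that you wrote, the $H_y(t)\eta(t)$ term does not assemble and an uncancelled $\eta$-term survives (the corrected drift is also what matches the $f_y(t)p(t)$ and $g^{ij}_y(t)p(t)$ entries of the adjoint equation (\ref{neq-adjoint-11-11})). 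Second, since $\eta=y-\bar y$, the finite-variation part of the $\eta$-equation is $-d(K-\bar K)$, not $-dK$: the piece $-\int_0^T m\,dK\geq0$ is fine, but $+\int_0^T m\,d\bar K\leq0$ points the wrong way, and this is precisely where the hypothesis $P^{\ast}\in\mathcal{P}^{\ast}$ must enter — $E_{P^{\ast}}[\bar K(T)]=0$ together with $\bar K$ decreasing and $\bar K(0)=0$ forces $\bar K\equiv0$ $P^{\ast}$-a.s., so this term vanishes. Your proposal never invokes $P^{\ast}\in\mathcal{P}^{\ast}$ (the paper's own proof of Theorem \ref{the-sufficient} is equally terse on this point), so make that step explicit; with these two corrections the proof goes through as you outline.
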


\section{LQ problem}

For simplicity, we suppose $d=1$. In this case,%
\[
G(a)=\frac{1}{2}(\bar{\sigma}^{2}a^{+}-\underline{\sigma}^{2}a^{-}),\text{
}a\in\mathbb{R},
\]
where $\bar{\sigma}^{2}=\mathbb{\hat{E}}[(B_{1})^{2}]$, $\underline{\sigma
}^{2}=-\mathbb{\hat{E}}[-(B_{1})^{2}]$. Consider the following LQ problem. The
state equation is%
\begin{equation}
\left\{
\begin{array}
[c]{rl}%
dx(t)= & [A(t)x(t)+\tilde{B}(t)u(t)+b(t)]dt+[C(t)x(t)+D(t)u(t)+\sigma
(t)]dB(t),\\
x(0)= & x_{0},\;x_{0}\in\mathbb{R}^{n},
\end{array}
\right.  \label{Linear q}%
\end{equation}
where $\mathcal{U}[0,T]:=\{u(\cdot)\mid u(\cdot)\in M_{G}^{2}(0,T;\mathbb{R}%
^{m})\}$ and $A(\cdot)$, $C(\cdot)$, $\tilde{B}(\cdot)$, $D(\cdot)$,
$b(\cdot)$, $\sigma(\cdot)$ are deterministic functions. The cost functional
is
\[
J(u(\cdot))=\frac{1}{2}\mathbb{\hat{E}[}\int_{0}^{T}[\langle
Q(t)x(t),x(t)\rangle+2\langle S(t)x(t),u(t)\rangle+\langle
R(t)u(t),u(t)\rangle]dt+\langle Lx(T),x(T)\rangle],
\]
where $Q(\cdot)$, $S(\cdot)$, $R(\cdot)$ are deterministic functions. The
stochastic optimal control problem is to minimize the cost functional over
$\mathcal{U}[0,T]$.

In the following, the variable $t$ will be suppressed. We suppose the
functions satisfy the following conditions:%
\begin{equation}
\left\{
\begin{array}
[c]{l}%
A,C\in L^{\infty}(0,T;\mathbb{R}^{n\times n}),\text{ }\tilde{B}\in L^{\infty
}(0,T;\mathbb{R}^{n\times m}),\text{ }D\in C(0,T;\mathbb{R}^{n\times m}),\\
Q\in L^{\infty}(0,T;\mathbb{S}_{n}),\text{ }S\in L^{\infty}(0,T;\mathbb{R}%
^{m\times n}),\text{ }R\in C(0,T;\mathbb{S}_{m}),\\
b,\sigma\in L^{2}(0,T;\mathbb{R}^{n}),\text{ }L\in\mathbb{S}_{n}%
\end{array}
\right.  \label{SLQcon}%
\end{equation}%
\begin{equation}
R\gg0,\text{ }Q-SR^{-1}S^{T}\geq0,\text{ }L\gg0, \label{Standard}%
\end{equation}
where $R\gg0$ means that there exists a $\delta>0$ such that $R\geq\delta I$
and similarly for $L\gg0$. In this case, the Hamiltonian function is%
\begin{equation}
H(x,u,p,q,t)=\langle p,Ax+\tilde{B}u+b\rangle+\langle q,Cx+Du+\sigma
\rangle\gamma(t)+\frac{1}{2}(\langle Qx,x\rangle+2\langle Sx,u\rangle+\langle
Ru,u\rangle). \label{LQham}%
\end{equation}
Let $\bar{u}$ be an optimal control. By maximum principle which still holds
for this case, there exists a $P^{\ast}\in\mathcal{P}$ such that%
\begin{equation}
\left\{
\begin{array}
[c]{l}%
E_{P^{\ast}}[\bar{K}(T)]=0;\\
\tilde{B}^{T}p(t)+D^{T}q(t)\gamma(t)+S\bar{x}(t)+R\bar{u}(t)=0,\text{ under
}P^{\ast}\text{,}%
\end{array}
\right.  \label{LQmax}%
\end{equation}
where $(p(\cdot),q(\cdot),N(\cdot))$ is the solution of the following adjoint
equation under the probability $P^{\ast}$%
\begin{equation}
\left\{
\begin{array}
[c]{rl}%
-dp(t)= & [Q\bar{x}(t)+S^{T}\bar{u}(t)+A^{T}p(t)+C^{T}q(t)\gamma
(t)]dt-q(t)dB(t)-dN(t),\\
p(T)= & L\bar{x}(T).
\end{array}
\right.  \label{LQadj}%
\end{equation}
Suppose that
\begin{equation}
p(t)=P(t)\bar{x}(t)+\varphi(t) \label{Lqcon1}%
\end{equation}
with $P(\cdot)\in C^{1}([0,T],\mathbb{S}_{n})$, $\varphi(\cdot)\in
C^{1}([0,T],\mathbb{R}^{n})$. Applying It\^{o}'s formula to $p(t)$, we can get%
\[
q(t)=P(t)C(t)\bar{x}(t)+P(t)D(t)\bar{u}(t)+P(t)\sigma(t),
\]%
\[
\dot{P}\bar{x}+PA\bar{x}+P\tilde{B}\bar{u}+Pb+\dot{\varphi}+Q\bar{x}+S^{T}%
\bar{u}+A^{T}p+C^{T}q\gamma=0.
\]
Combining (\ref{LQmax}), (\ref{Lqcon1}) and the above two equalities, we can
obtain that%
\begin{equation}
q=[PC-PD(R+D^{T}PD\gamma)^{-1}(\tilde{B}^{T}P+S+D^{T}PC\gamma)]\bar
{x}-PD(R+D^{T}PD\gamma)^{-1}(\tilde{B}^{T}\varphi+D^{T}P\sigma\gamma)+P\sigma,
\label{Lqcon2}%
\end{equation}%
\begin{equation}
\bar{u}=-(R+D^{T}PD\gamma)^{-1}[(\tilde{B}^{T}P+S+D^{T}PC\gamma)\bar{x}%
+\tilde{B}^{T}\varphi+D^{T}P\sigma\gamma], \label{Lqcon3}%
\end{equation}
and the following Riccati equation for $P$%
\begin{equation}
\left\{
\begin{array}
[c]{l}%
\dot{P}+PA+A^{T}P+C^{T}PC\gamma+Q\\
-(\tilde{B}^{T}P+S+D^{T}PC\gamma)^{T}(R+D^{T}PD\gamma)^{-1}(\tilde{B}%
^{T}P+S+D^{T}PC\gamma)=0,\text{ a.e. }t\in\lbrack0,T],\\
P(T)=L,
\end{array}
\right.  \label{Lqcon4}%
\end{equation}%
\begin{equation}
\left\{
\begin{array}
[c]{l}%
\dot{\varphi}+[A-\tilde{B}(R+D^{T}PD\gamma)^{-1}(\tilde{B}^{T}P+S+D^{T}%
PC\gamma)]^{T}\varphi\\
+[C-D(R+D^{T}PD\gamma)^{-1}(\tilde{B}^{T}P+S+D^{T}PC\gamma)]^{T}P\sigma
\gamma+Pb=0,\text{ a.e. }t\in\lbrack0,T],\\
\varphi(T)=0.
\end{array}
\right.  \label{Lqcon5}%
\end{equation}
It is important to note that $P^{\ast}$ is uniquely determined by the choose
of $\gamma$. We choose $\gamma(t)=\bar{\sigma}^{2}$. It is well known that the
Riccati equation (\ref{Lqcon4}) has a unique solution $P\gg0$, and then
equation (\ref{Lqcon5}) has a unique solution $\varphi$. In this case, the
optimal control%
\begin{equation}
\bar{u}=-(R+D^{T}PD\bar{\sigma}^{2})^{-1}[(\tilde{B}^{T}P+S+D^{T}PC\bar
{\sigma}^{2})\bar{x}+\tilde{B}^{T}\varphi+D^{T}P\sigma\bar{\sigma}^{2}],
\label{optmallq}%
\end{equation}
where%
\begin{equation}
\left\{
\begin{array}
[c]{rl}%
d\bar{x}(t)= & [A(t)\bar{x}(t)+\tilde{B}(t)\bar{u}(t)+b(t)]dt+[C(t)\bar
{x}(t)+D(t)\bar{u}(t)+\sigma(t)]dB(t),\\
x(0)= & x_{0},\;x_{0}\in\mathbb{R}^{n}.
\end{array}
\right.  \label{optstalq}%
\end{equation}

In the following, we prove that the above $\bar{u}$ is the optimal control.

\begin{theorem}
Suppose (\ref{SLQcon}) and (\ref{Standard}) hold. Then $\bar{u}$ defined in
(\ref{optmallq}) and (\ref{optstalq}) is the optimal control, where $P$ and
$\varphi$ are solutions for equations (\ref{Lqcon4}) and (\ref{Lqcon5}) with
$\gamma(t)=\bar{\sigma}^{2}$.
\end{theorem}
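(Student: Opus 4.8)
The LQ cost functional is a special instance of the setting of Section 4: in the $G$-BSDE (\ref{state-2}) the generator $f(t,x,u)=\frac12(\langle Qx,x\rangle+2\langle Sx,u\rangle+\langle Ru,u\rangle)$ does not depend on $(y,z)$, $g^{ij}\equiv h^{ij}\equiv 0$, and $\phi(x)=\frac12\langle Lx,x\rangle$, so that $\bar y(0)=\mathbb{\hat{E}}[\phi(\bar x(T))+\int_0^T f(t,\bar x(t),\bar u(t))dt]$. The plan is therefore to verify the hypotheses of the sufficient condition, Theorem \ref{the-sufficient}, for the candidate pair $(\bar u,P^{\ast})$, where $P^{\ast}$ is the unique element of $\mathcal{P}$ under which $d\langle B\rangle_t=\bar\sigma^{2}dt$. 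Three things must be checked: (a) $H$ is convex in $(x,u)$ and $\phi$ is convex in $x$; (b) $H_u(\bar x(t),\bar y(t),\bar u(t),p(t),q(t),t)(u-\bar u(t))\geq 0$ for all $u\in U$, $P^{\ast}$-a.s.; (c) $P^{\ast}\in\mathcal{P}^{\ast}$, i.e. $E_{P^{\ast}}[\bar K(T)]=0$.

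For (a): in $(x,u)$ the Hamiltonian (\ref{LQham}) is the sum of an affine part and the quadratic form $\frac12\langle\mathcal{M}(x,u),(x,u)\rangle$ with $\mathcal{M}=\left[\begin{array}{cc}Q&S^{T}\\ S&R\end{array}\right]$; by (\ref{Standard}) ($R\gg0$ and $Q-SR^{-1}S^{T}\geq0$) the Schur complement criterion gives $\mathcal{M}\geq0$, so $H$ is convex in $(x,u)$ (and, being $y$-independent, trivially in $y$), while $\phi(x)=\frac12\langle Lx,x\rangle$ is convex because $L\gg0$. For (b): since $U=\mathbb{R}^{m}$, the stated inequality is equivalent to $H_u=0$, i.e. to $\tilde B^{T}p(t)+D^{T}q(t)\gamma(t)+S\bar x(t)+R\bar u(t)=0$ --- but this is precisely the relation from which $\bar u$ was constructed: substituting $p=P\bar x+\varphi$ and the expression (\ref{Lqcon2}) for $q$ and solving for $\bar u$ (the matrix $R+D^{T}PD\bar\sigma^{2}$ is invertible since $R\gg0$ and $P\gg0$) yields exactly (\ref{optmallq}). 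Hence (b) holds by construction, with $\gamma\equiv\bar\sigma^{2}$.

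The crux is (c). From (\ref{state-2}) at $t=0$ we have $\bar K(T)=\phi(\bar x(T))+\int_{0}^{T}f(t,\bar x(t),\bar u(t))dt-\int_{0}^{T}\bar z(t)dB(t)-\bar y(0)$, and $E_{P^{\ast}}[\int_{0}^{T}\bar z\,dB]=0$ because $B$ is a square-integrable $P^{\ast}$-martingale and $\bar z\in M_G^{2}(0,T)$; so it suffices to prove that the $G$-expectation $\bar y(0)=\mathbb{\hat{E}}[\phi(\bar x(T))+\int_{0}^{T}f(t,\bar x(t),\bar u(t))dt]$ is attained at $P^{\ast}$. I would obtain this by applying It\^o's formula (valid quasi-surely) to $\langle P(t)\bar x(t),\bar x(t)\rangle+2\langle\varphi(t),\bar x(t)\rangle$ along the optimal trajectory; using the feedback form (\ref{optmallq}), the Riccati equation (\ref{Lqcon4}) (which annihilates the $x$-quadratic part of the resulting $dt$-integrand) and the linear equation (\ref{Lqcon5}) for $\varphi$ (which annihilates the remaining $dt$-terms, including those carrying $b$ and $\sigma$), one is left with
\[
\phi(\bar x(T))+\int_{0}^{T}f(t,\bar x(t),\bar u(t))dt=\frac12\langle P(0)x_{0},x_{0}\rangle+\langle\varphi(0),x_{0}\rangle-\frac12\int_{0}^{T}\langle P(t)w(t),w(t)\rangle\big(\bar\sigma^{2}dt-d\langle B\rangle_t\big)+\int_{0}^{T}\langle P(t)\bar x(t)+\varphi(t),w(t)\rangle dB(t),
\]
where $w(t):=C(t)\bar x(t)+D(t)\bar u(t)+\sigma(t)$. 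Since $P(t)\gg0$ and $\bar\sigma^{2}dt-d\langle B\rangle_t\geq0$ quasi-surely, the third term is $\leq0$; the last term is a symmetric $G$-martingale, so $\mathbb{\hat{E}}[\pm(\cdot)]=0$. Hence $\mathbb{\hat{E}}$ of the right-hand side equals $\frac12\langle P(0)x_{0},x_{0}\rangle+\langle\varphi(0),x_{0}\rangle$, and this value is attained at $P^{\ast}$, where the nonpositive term vanishes identically and the stochastic integral has zero $P^{\ast}$-mean. This gives $E_{P^{\ast}}[\phi(\bar x(T))+\int_{0}^{T}f\,dt]=\bar y(0)$, hence $E_{P^{\ast}}[\bar K(T)]=0$ and $P^{\ast}\in\mathcal{P}^{\ast}$.

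With (a), (b), (c) established, Theorem \ref{the-sufficient} applies and $\bar u$ is an optimal control. The main obstacle is step (c), and within it the (routine but delicate) It\^o bookkeeping: verifying that the whole $dt$-integrand collapses to zero requires invoking (\ref{Lqcon3})--(\ref{Lqcon5}) simultaneously, and the sign of the $\bar\sigma^{2}dt-d\langle B\rangle_t$ term --- the reason $\gamma\equiv\bar\sigma^{2}$, rather than any other admissible volatility, is the correct choice --- is exactly what makes $P^{\ast}$ the maximizing measure. A secondary technical point is the integrability needed for the stochastic integral to be a genuine $P^{\ast}$-martingale; this can be sidestepped by first computing $\mathbb{\hat{E}}$ using $\mathbb{\hat{E}}[\pm\int_{0}^{T}\langle P\bar x+\varphi,w\rangle dB]=0$ and only then restricting to $P^{\ast}$.
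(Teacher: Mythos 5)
Your strategy is the paper's: verify the hypotheses of Theorem \ref{the-sufficient} for $P^{\ast}$ with $\langle B\rangle(t)=\bar{\sigma}^{2}t$ (convexity of $H$ and $\phi$, $H_{u}=0$ under $P^{\ast}$ by the very construction of $\bar{u}$), so that everything reduces to $E_{P^{\ast}}[\bar{K}(T)]=0$, which is then obtained by applying It\^{o}'s formula to the quadratic ansatz $\tfrac12\langle P\bar{x},\bar{x}\rangle+\langle\varphi,\bar{x}\rangle$ and exploiting $P\gg0$ together with $d\langle B\rangle_t\leq\bar{\sigma}^{2}dt$. The one flaw is in your bookkeeping: equation (\ref{Lqcon5}) annihilates only the $dt$-terms \emph{linear} in $\bar{x}$, not the purely deterministic ones ($\langle\varphi,b\rangle$, $\tfrac12\bar{\sigma}^{2}\langle P\sigma,\sigma\rangle$, and the quadratic contribution of the constant part $-(R+D^{T}PD\bar{\sigma}^{2})^{-1}(\tilde{B}^{T}\varphi+D^{T}P\sigma\bar{\sigma}^{2})$ of the feedback), so your displayed identity is false as written; the paper absorbs exactly these leftovers into the auxiliary ODE for $l$ and works with $\tilde{Y}=\tfrac12\langle P\bar{x},\bar{x}\rangle+\langle\varphi,\bar{x}\rangle+l$. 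The slip is harmless for your argument: the missing term is a deterministic constant, so it shifts $\mathbb{\hat{E}}[\phi(\bar{x}(T))+\int_{0}^{T}f\,dt]$ and $E_{P^{\ast}}[\phi(\bar{x}(T))+\int_{0}^{T}f\,dt]$ equally, and the attainment of the sup at $P^{\ast}$ (hence $E_{P^{\ast}}[\bar{K}(T)]=0$) survives once you add it back. The only other (cosmetic) difference is organizational: the paper identifies $\bar{K}(T)$ explicitly, showing via uniqueness of the $G$-BSDE solution that $\bar{K}(T)=\tfrac12\int_{0}^{T}\langle P(C\bar{x}+D\bar{u}+\sigma),C\bar{x}+D\bar{u}+\sigma\rangle\,d(\langle B\rangle(s)-\bar{\sigma}^{2}s)$, whereas you express $E_{P^{\ast}}[\bar{K}(T)]$ through $\bar{y}(0)-E_{P^{\ast}}[\phi(\bar{x}(T))+\int_{0}^{T}f\,dt]$ and prove the representing measure is $P^{\ast}$; the two are equivalent, the paper's version giving slightly more (an explicit formula for $\bar{K}$), yours avoiding the appeal to uniqueness. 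You should also state explicitly, as the paper does, that $(p,q,N)=(P\bar{x}+\varphi,\,q\ \text{of (\ref{Lqcon2})},\,0)$ solves the adjoint equation (\ref{LQadj}) under $P^{\ast}$, since that is part of the hypotheses of Theorem \ref{the-sufficient}; it follows from the same It\^{o} computation used to derive (\ref{Lqcon2})--(\ref{Lqcon5}).
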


\begin{proof}
Let $P^{\ast}\in\mathcal{P}$ be the probability such that $\langle
B\rangle(t)=\bar{\sigma}^{2}t$. It is easy to check that $p$, $q$ defined in
(\ref{Lqcon1}) and (\ref{Lqcon2}), and $N=0$ is the solution of the adjoint
equation (\ref{LQadj}) under the probability $P^{\ast}$. Also, it is easy to
check that the Hamiltonian function $H$ is convex with respect to $x$, $u$
and
\[
H_{u}(\bar{x}(t),\bar{u}(t),p(t),q(t),t)=0,\text{ under }P^{\ast}\text{.}%
\]
By Theorem \ref{the-sufficient}, we only need to verify that $E_{P^{\ast}%
}[\bar{K}(T)]=0$. Let $l$ be the solution of the following ODE:%
\[
\left\{
\begin{array}
[c]{l}%
l^{\prime}+\langle\varphi,b\rangle+\frac{1}{2}\bar{\sigma}^{2}\langle
P\sigma,\sigma\rangle\\
-\frac{1}{2}(\tilde{B}^{T}\varphi+\bar{\sigma}^{2}D^{T}P\sigma)^{T}%
(R+D^{T}PD\bar{\sigma}^{2})^{-1}(\tilde{B}^{T}\varphi+\bar{\sigma}^{2}%
D^{T}P\sigma)=0,\text{ a.e. }t\in\lbrack0,T],\\
l(T)=0.
\end{array}
\right.
\]
Set
\[%
\begin{array}
[c]{l}%
\tilde{Y}(t)=\frac{1}{2}\langle P\bar{x},\bar{x}\rangle+\langle\varphi,\bar
{x}\rangle+l,\\
\tilde{Z}(t)=\langle P\bar{x}+\varphi,C\bar{x}+D\bar{u}+\sigma\rangle,\\
\tilde{K}(t)=\frac{1}{2}\int_{0}^{t}\langle P(C\bar{x}+D\bar{u}+\sigma
),C\bar{x}+D\bar{u}+\sigma\rangle d\langle B\rangle(s)-\int_{0}^{t}G(\langle
P(C\bar{x}+D\bar{u}+\sigma),C\bar{x}+D\bar{u}+\sigma\rangle)ds.
\end{array}
\]
By applying It\^{o}'s formula to $\tilde{Y}$ and some simple calculations, we
can get%
\[
\tilde{Y}(t)=\frac{1}{2}\langle L\bar{x}(T),\bar{x}(T)\rangle+\frac{1}{2}%
\int_{t}^{T}[\langle Q\bar{x},\bar{x}\rangle+2\langle S\bar{x},\bar{u}%
\rangle+\langle R\bar{u},\bar{u}\rangle]ds-\int_{t}^{T}\tilde{Z}%
(s)dB(s)-(\tilde{K}(T)-\tilde{K}(t)),
\]
which implies that $\bar{K}(T)=\tilde{K}(T)$. Note that $\langle P(C\bar
{x}+D\bar{u}+\sigma),C\bar{x}+D\bar{u}+\sigma\rangle\geq0$, then we get%
\[
\bar{K}(T)=\frac{1}{2}\int_{0}^{T}\langle P(C\bar{x}+D\bar{u}+\sigma),C\bar
{x}+D\bar{u}+\sigma\rangle d(\langle B\rangle(s)-\bar{\sigma}^{2}s).
\]
Obviously, $E_{P^{\ast}}[\bar{K}(T)]=0$. Thus $\bar{u}$ is the optimal control.
\end{proof}

\begin{remark}
Using the same method, we can obtain the result for the state equation and
cost functional containing the term $\langle B\rangle$. For the Riccati
equation (\ref{Lqcon4}), we only need $R+D^{T}PD\bar{\sigma}^{2}>0$. This case
will be discussed in our forthcoming paper. Note that $R+D^{T}%
PD\underline{\sigma}^{2}<0$ may be hold, so the LQ problem may be infinite for
some $P\in\mathcal{P}$, but it is finite under $G$-expectation. The reason of
this is the uncertainty of probability measures, which is different from
classical LQ problem.
\end{remark}

In the following, we give an example to point out that the LQ problem with
random coefficients is more difficult and $P^{\ast}$ is not the probability
measure such that $\langle B\rangle(t)=\bar{\sigma}^{2}t$.

\begin{example}
We consider the following $1$-dimensional state equation:%
\[
x(t)=\int_{0}^{t}\sqrt{as-\langle B\rangle(s)}dB(s),
\]
where $a>\bar{\sigma}^{2}$ is a constant. The cost functional is
\[
J(u(\cdot))=\frac{1}{2}\mathbb{\hat{E}[}\int_{0}^{T}(at-\langle B\rangle
(t))|u(t)|^{2}d\langle B\rangle(t)+|x(T)|^{2}].
\]
By applying It\^{o}'s formula to $|x(t)|^{2}$, it is easy to check that
\[
J(u(\cdot))=\frac{1}{2}\mathbb{\hat{E}[}\int_{0}^{T}(at-\langle B\rangle
(t))(|u(t)|^{2}+1)d\langle B\rangle(t)].
\]
Obvious, the optimal control $\bar{u}\equiv0$ and $P^{\ast}\in\mathcal{P}$
satisfies
\[
\mathbb{\hat{E}[}\int_{0}^{T}(at-\langle B\rangle(t))d\langle B\rangle
(t)]=E_{P^{\ast}}[\int_{0}^{T}(at-\langle B\rangle(t))d\langle B\rangle(t)].
\]
By simple calculation, we can obtain $P^{\ast}$ is the probability measure
such that
\[
\langle B\rangle(t)=\int_{0}^{t}(\underline{\sigma}^{2}I_{[0,t^{\ast}%
]}(s)+\bar{\sigma}^{2}I_{(t^{\ast},T]}(s))ds,
\]
where $t^{\ast}=\bar{\sigma}^{2}T(a+\bar{\sigma}^{2}-\underline{\sigma}%
^{2})^{-1}$. It is easy to check that this $\bar{u}$ satisfies the maximum
principle in Theorem \ref{Thm-MP-new}.
\end{example}

\section{Appendix}

The following proposition is about some further estimates for Theorems
\ref{variational derivative-y} and \ref{variational ineq}, which is interest
of itself.

\begin{proposition}
Suppose (H1)-(H3) hold. Then

\begin{description}
\item[(1)] for each $u\in\mathcal{U}[0,T]$, there exists a $P^{u}%
\in\mathcal{P}^{\ast}$ such that
\[%
\begin{array}
[c]{l}%
E_{P^{u}}[\Theta^{u}]=\underset{P\in\mathcal{P}^{\ast}}{\sup}E_{P}[\Theta
^{u}],\\
\underset{\rho\rightarrow0}{\lim}E_{P^{u}}[\frac{-\int_{0}^{T}m(t)dK_{\rho
}^{u}(t)}{\rho}]=0;
\end{array}
\]

\item[(2)] there exists a $P^{\ast}\in\mathcal{P}^{\ast}$ such that%
\[%
\begin{array}
[c]{l}%
\underset{P\in\mathcal{P}^{\ast}}{\sup}\underset{u\in\mathcal{U}[0,T]}{\inf
}E_{P}[\Theta^{u}]=\underset{u\in\mathcal{U}[0,T]}{\inf}E_{P^{\ast}}%
[\Theta^{u}],\\
\underset{u\in\mathcal{U}[0,T]}{\inf}(\underset{\rho\rightarrow0}{\lim
}E_{P^{\ast}}[\frac{-\int_{0}^{T}m(t)dK_{\rho}^{u}(t)}{\rho}])=0.
\end{array}
\]

\end{description}
\end{proposition}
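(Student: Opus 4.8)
The plan is to obtain a single measure-by-measure variational identity and then deduce both parts from it together with Theorems~\ref{variational derivative-y} and~\ref{variational ineq}. Throughout, $K_{\rho}^{u}$ denotes the decreasing $G$-martingale in the solution of~(\ref{state-2}) associated with the control $\bar{u}(\cdot)+\rho(u(\cdot)-\bar{u}(\cdot))$, i.e. the process $K_{\rho}$ of Section~4.1. The identity I aim at is
\begin{equation}
\frac{y_{\rho}(0)-\bar{y}(0)}{\rho}=E_{P}[\Theta^{u}]+E_{P}[\tfrac{-\int_{0}^{T}m(t)dK_{\rho}^{u}(t)}{\rho}]+o(1)\qquad(\rho\rightarrow0),\tag{$\star$}
\end{equation}
valid for every $P\in\mathcal{P}^{\ast}$ and every $u\in\mathcal{U}[0,T]$, with the $o(1)$ uniform in $P$; note $-\int_{0}^{T}m\,dK_{\rho}^{u}\geq0$ since $m>0$ and $K_{\rho}^{u}$ is nonincreasing, so each bracket on the right of~($\star$) is nonnegative. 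Granting~($\star$), both assertions follow by substituting the appropriate reference measure and doing elementary $\inf/\sup$ bookkeeping.

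To establish~($\star$) I would rerun the computation in Step~1 of the proof of Theorem~\ref{variational derivative-y} but \emph{without} absorbing the $K_{\rho}^{u}$-term. Starting from the linearized identity for $W(t):=\bar{K}(t)+y_{\rho}(t)-\bar{y}(t)$ displayed just before~(\ref{v-equation-y-1}), substitute $y_{\rho}-\bar{y}=W-\bar{K}$ so that its drift reads $f_{y}(s)W(s)+R(s)$ with $R(s)=f_{x}(s)(x_{\rho}(s)-\bar{x}(s))-f_{y}(s)\bar{K}(s)+f_{u}(s)\rho(u(s)-\bar{u}(s))+J_{2}(s)$. Since $m(\cdot)=\exp\{\int_{0}^{\cdot}f_{y}(s)ds\}$ has bounded variation with $dm=f_{y}m\,dt$, It\^{o}'s product rule applied to $m(t)W(t)$ cancels the linear-in-$W$ term and produces no bracket term; integrating over $[0,T]$ writes the deterministic quantity $y_{\rho}(0)-\bar{y}(0)$ as $m(T)W(T)+\int_{0}^{T}mR\,dt-\int_{0}^{T}m(z_{\rho}-\bar{z})dB-\int_{0}^{T}m\,dK_{\rho}^{u}$. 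I then take $E_{P}$ with $P\in\mathcal{P}^{\ast}$, using three facts: the $G$-It\^{o} integral is a symmetric $G$-martingale, so its $P$-mean is zero; $E_{P}[\bar{K}(T)]=0$ together with $\bar{K}$ nonincreasing and $\bar{K}(0)=0$ forces $\bar{K}\equiv0$ $P$-a.s., whence, via $\bar{K}(T)m(T)=\int_{0}^{T}f_{y}\bar{K}m\,ds+\int_{0}^{T}m\,d\bar{K}$ (as in that proof), the entire $\bar{K}$-part vanishes under $E_{P}$; and $|E_{P}[\,\cdot\,]|\leq\mathbb{\hat{E}}[|\cdot|]$ together with Lemma~\ref{variational derivative-backward}(i)--(iii) makes the residual terms $E_{P}[m(T)J_{1}]$, $\rho E_{P}[\int_{0}^{T}mf_{x}\tilde{x}_{\rho}ds]$ and $E_{P}[\int_{0}^{T}mJ_{2}ds]$ of order $o(\rho)$, uniformly in $P$. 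Recognising $\Theta^{u}$ and dividing by $\rho$ gives~($\star$). This is the step I expect to be the main, and essentially the only non-routine, obstacle: the point is to keep the nonnegative $K$-defect explicit instead of hiding it inside the $G$-BSDE representation, and to localize to a fixed $P$.

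For part~(1), take the $P^{u}\in\mathcal{P}^{\ast}$ supplied by Theorem~\ref{variational derivative-y}, for which $E_{P^{u}}[\Theta^{u}]=\sup_{P\in\mathcal{P}^{\ast}}E_{P}[\Theta^{u}]=\lim_{\rho\rightarrow0}\rho^{-1}(y_{\rho}(0)-\bar{y}(0))$. Substituting $P=P^{u}$ in~($\star$) and rearranging gives $E_{P^{u}}[\rho^{-1}(-\int_{0}^{T}m\,dK_{\rho}^{u})]=\rho^{-1}(y_{\rho}(0)-\bar{y}(0))-E_{P^{u}}[\Theta^{u}]+o(1)$, whose right-hand side tends to $0$ as $\rho\rightarrow0$; this is the second assertion of~(1).

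For part~(2): by Theorem~\ref{variational derivative-y}, $\sup_{P\in\mathcal{P}^{\ast}}E_{P}[\Theta^{u}]=\lim_{\rho\rightarrow0}\rho^{-1}(y_{\rho}(0)-\bar{y}(0))\geq0$ for every $u$, while the variational equation~(\ref{variational-eq-1}) with $u=\bar{u}$ has the solution $\hat{x}\equiv0$, so $\Theta^{\bar{u}}\equiv0$ and $\inf_{u}\sup_{P}E_{P}[\Theta^{u}]=0$; by Sion's minimax theorem (as in the proof of Theorem~\ref{variational ineq}), $\sup_{P\in\mathcal{P}^{\ast}}\inf_{u}E_{P}[\Theta^{u}]=0$, and taking $P^{\ast}$ as in Theorem~\ref{variational ineq} (so $\inf_{u}E_{P^{\ast}}[\Theta^{u}]\geq0$) and combining with $\inf_{u}E_{P^{\ast}}[\Theta^{u}]\leq\sup_{P}\inf_{u}E_{P}[\Theta^{u}]=0$ shows $\inf_{u}E_{P^{\ast}}[\Theta^{u}]=0=\sup_{P\in\mathcal{P}^{\ast}}\inf_{u}E_{P}[\Theta^{u}]$, which is the first assertion of~(2). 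For the second, ($\star$) with $P=P^{\ast}$ gives, for each $u$, existence of $D^{u}:=\lim_{\rho\rightarrow0}E_{P^{\ast}}[\rho^{-1}(-\int_{0}^{T}m\,dK_{\rho}^{u})]=\sup_{P\in\mathcal{P}^{\ast}}E_{P}[\Theta^{u}]-E_{P^{\ast}}[\Theta^{u}]\geq0$; choosing $u_{n}$ with $\sup_{P}E_{P}[\Theta^{u_{n}}]\rightarrow\inf_{u}\sup_{P}E_{P}[\Theta^{u}]=0$ forces $0\leq E_{P^{\ast}}[\Theta^{u_{n}}]\leq\sup_{P}E_{P}[\Theta^{u_{n}}]\rightarrow0$ (since $\inf_{u}E_{P^{\ast}}[\Theta^{u}]=0$), so $D^{u_{n}}\rightarrow0$, and with $D^{u}\geq0$ for all $u$ we conclude $\inf_{u}D^{u}=0$, as required.
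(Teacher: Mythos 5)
Your proposal is correct and follows essentially the same route as the paper: the identity $(\star)$ is exactly the measure-by-measure relation the paper derives (It\^{o}'s formula on $m(t)(\bar{K}(t)+y_{\rho}(t)-\bar{y}(t))$, $\bar{K}\equiv0$ $P$-a.s.\ for $P\in\mathcal{P}^{\ast}$, and the error estimates of Lemma \ref{variational derivative-backward}), from which part (1) follows via Theorem \ref{variational derivative-y} and part (2) via Sion's minimax theorem and the $P^{\ast}$ of Theorem \ref{variational ineq}. The only (harmless) cosmetic difference is in part (2): you pin down the common minimax value as $0$ using $\Theta^{\bar{u}}\equiv0$ and a minimizing sequence, whereas the paper argues directly from $\inf_{u}E_{P^{u}}[\Theta^{u}]=\inf_{u}E_{P^{\ast}}[\Theta^{u}]$ together with $E_{P^{u}}[\Theta^{u}]\geq E_{P^{\ast}}[\Theta^{u}]$.
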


\begin{proof}
(1) Consider
\[%
\begin{array}
[c]{rl}%
y_{\rho}^{u}(t)-\bar{y}(t)= & \phi(x_{\rho}^{u}(T))-\phi(\bar{x}(T))+\int%
_{t}^{T}(f_{\rho}^{u}(s)-f(s))ds-\int_{t}^{T}(z_{\rho}^{u}(s)-\bar
{z}(s))dB(s)\\
& -(K_{\rho}^{u}(T)-K_{\rho}^{u}(t))+(\bar{K}(T)-\bar{K}(t)).
\end{array}
\]
By Theorem \ref{variational derivative-y}, there exists a $P^{u}\in
\mathcal{P}^{\ast}$ such that $E_{P^{u}}[\Theta^{u}]=\sup_{P\in\mathcal{P}%
^{\ast}}E_{P}[\Theta^{u}]$. Note that $\bar{K}\equiv0$ under probability
$P^{u}$. Similar as in the proof of Theorem \ref{variational derivative-y}, we
can derive%
\[
E_{P^{u}}[\frac{\int_{0}^{T}m(s)dK_{\rho}^{u}(s)}{\rho}]=E_{P^{u}}[\Theta
^{u}]-\frac{y_{\rho}^{u}(0)-\bar{y}(0)}{\rho}+o(1).
\]
By Theorem \ref{variational derivative-y},
\[
\underset{\rho\rightarrow0}{\lim}\frac{y_{\rho}^{u}(0)-\bar{y}(0)}{\rho
}=E_{P^{u}}[\Theta^{u}],
\]
which implies that%
\[
\underset{\rho\rightarrow0}{\lim}E_{P^{u}}[\frac{-\int_{0}^{T}m(s)dK_{\rho
}^{u}(s)}{\rho}]=0.
\]

(2) For any $P\in\mathcal{P}^{\ast}$, similar analysis as in (1), we have%
\[
E_{P}[\frac{\int_{0}^{T}m(s)dK_{\rho}^{u}(s)}{\rho}]=E_{P}[\Theta^{u}%
]-\frac{y_{\rho}^{u}(0)-\bar{y}(0)}{\rho}+o(1).
\]
Then,%
\begin{equation}
\underset{\rho\rightarrow0}{\lim}E_{P}[\frac{-\int_{0}^{T}m(s)dK_{\rho}%
^{u}(s)}{\rho}]=E_{P^{u}}[\Theta^{u}]-E_{P}[\Theta^{u}]\geq0.
\label{v-equation-y-4}%
\end{equation}
By Minimax Theorem, we can get%
\[
\underset{u\in\mathcal{U}[0,T]}{\inf}\sup_{P\in\mathcal{P}^{\ast}}E_{P}%
[\Theta^{u}]=\sup_{P\in\mathcal{P}^{\ast}}\underset{u\in\mathcal{U}%
[0,T]}{\inf}E_{P}[\Theta^{u}].
\]
By the proof of Theorem \ref{variational ineq}, we can obtain a $P^{\ast}%
\in\mathcal{P}^{\ast}$ such that
\[
\sup_{P\in\mathcal{P}^{\ast}}\underset{u\in\mathcal{U}[0,T]}{\inf}E_{P}%
[\Theta^{u}]=\underset{u\in\mathcal{U}[0,T]}{\inf}E_{P^{\ast}}[\Theta^{u}].
\]
Note that%
\[%
\begin{array}
[c]{l}%
\underset{u\in\mathcal{U}[0,T]}{\inf}\underset{P\in\mathcal{P}^{\ast}}{\sup
}E_{P}[\Theta^{u}]=\underset{u\in\mathcal{U}[0,T]}{\inf}E_{P^{u}}[\Theta
^{u}],\\
E_{P^{u}}[\Theta^{u}]=\underset{P\in\mathcal{P}^{\ast}}{\sup}E_{P}[\Theta
^{u}]\geq E_{P^{\ast}}[\Theta^{u}].
\end{array}
\]
We deduce that%
\[
\underset{u\in\mathcal{U}[0,T]}{\inf}(E_{P^{u}}[\Theta^{u}]-E_{P^{\ast}%
}[\Theta^{u}])=0.
\]
Taking $P=P^{\ast}$ in (\ref{v-equation-y-4}), it yields that%
\[
\underset{u\in\mathcal{U}[0,T]}{\inf}(\underset{\rho\rightarrow0}{\lim
}E_{P^{\ast}}[\frac{-\int_{0}^{T}m(s)dK_{\rho}^{u}(s)}{\rho}])=0.
\]
This completes the proof.
\end{proof}

\bigskip

\end{document}